\newcommand{\N}{\mathbb{N}}
\newcommand{\R}{\mathbb{R}}
\newcommand{\Z}{\mathbb{Z}}
\newcommand{\ve}{\varepsilon}
\newcommand{\lp}{\left(}
\newcommand{\rp}{\right)}
\newcommand{\wt}{\widetilde}
\newcommand{\wh}{\widehat}
\newcommand{\Op}{\textnormal{Op}}
\newcommand{\supp}{\textnormal{supp\,}}
\newcommand{\floor}[1]{\lfloor#1\rfloor}
\newcommand{\cbar}{\overline{c}}
\newcommand{\tbar}{\overline{t}}
\newcommand{\mubar}{{\overline{\mu}}}
\newcommand{\mbar}{{\overline{m}}}
\newcommand{\lambdabar}{{\overline{\lambda}}}
\newcommand{\Pbar}{{\overline{P}}}
\DeclareMathOperator{\Hess}{Hess}
\def\ox{\overline{x}}
\def\oxi{\overline{\xi}}
\def\thmhead@plain#1#2#3{%
  \thmname{#1}\thmnumber{\@ifnotempty{#1}{ }\@upn{#2}}%
  \thmnote{ {\the\thm@notefont#3}}}
\let\thmhead\thmhead@plain
\newtheorem{thm}{Theorem}
\newtheorem{lem}{Lemma}
\newtheorem{prop}[lem]{Proposition}
\newtheorem{cor}[lem]{Corollary}
\newtheorem{rmk}[lem]{Remark}
\newtheorem{defn}[lem]{Definition}
\newcommand{\thmref}[1]{Theorem~\ref{#1}}
\newcommand{\lemref}[1]{Lemma~\ref{#1}}
\definecolor{bpurple}{RGB}{170,0,200}
\definecolor{bgreen}{RGB}{0,170,120}
\numberwithin{equation}{section}
\numberwithin{lem}{section}
\def\R{\mathbb R}
\def\beas{\begin{eqnarray*}}
\def\eeas{\end{eqnarray*}}
\newcommand{\spann}{\textnormal{span\,}}
\def\tO{\widetilde{\Omega}}
\begin{document}

\title[Pointwise Weyl Laws for QCI Systems]{Pointwise Weyl Laws for Quantum Completely Integrable Systems}

\author[S. Eswarathasan]{Suresh Eswarathasan}
\address[S.~ Eswarathasan]{Department of Mathematics and Statistics, Dalhousie University, Halifax, NS, Canada B3H 4R2}
\email{sr766936@dal.ca}

\author[A. Greenleaf]{Allan Greenleaf}
\address[A.~ Greenleaf]{Department of Mathematics, University of Rochester, Rochester, NY, USA 14627}
\email{allan@math.rochester.edu}

\author[B. Keeler]{Blake Keeler}
\address[B.~ Keeler]{Department of Mathematical Sciences, Montana Technological University, Butte, MT, USA 59701}
\email{bkeeler@mtech.edu}

\maketitle

\begin{abstract}
The study of the asymptotics  of the spectral function for self-adjoint, elliptic differential,
or more generally pseudodifferential, operators on a compact 
manifold has a long history. The seminal 1968 paper of H\"ormander, following important prior 
contributions by G\"arding, Levitan, Avakumovi\'c, and Agmon-Kannai (to name only some), obtained pointwise asymptotics (or 
 a ``pointwise Weyl law") for a single elliptic, self-adjoint operator. 
 
Here, we establish a microlocalized pointwise Weyl law for the 
joint spectral functions of {\it quantum completely integrable}  (QCI) systems,
$\overline{P}=(P_1,P_2,\dots, P_n)$, where $P_i$ are first-order, classical, self-adjoint, pseudodifferential operators
on a compact manifold $M^n$,
with $\sum P_i^2$ elliptic and $[P_i,P_j]=0$ for $1\le i,j\le n$.
A particularly important case is when $(M,g)$ is Riemannian and $P_1=(-\Delta)^\frac12$.
We illustrate our result with  several examples, including surfaces of revolution.

\end{abstract}


\tableofcontents

\section{Introduction}
\subsection{Pointwise/Off-Diagonal Weyl Laws}
Let $(M,g)$ be a compact, connected, boundaryless $n$-dimensional Riemannian manifold, with (positive) Laplace-Beltrami operator acting
on functions,  $-\Delta_g$. It is well-known that $-\Delta_g$ has
a discrete spectrum, consisting of eigenvalues $0 = \lambda_0^2 <\lambda_1^2 \le \lambda_2^2 \le \dots$,
and  an orthonormal basis $\{\phi_j\}_{j=0}^\infty$ of $L^2(M)$ consisting of real-valued eigenfunctions, each satisfying 
\[-\Delta_g \phi_j = \lambda_j^2\phi_j,\, \forall j\ge 0.\]
The nature of the Laplace spectrum has been the subject of a vast amount of work over the last hundred 
years, particularly in the  high-frequency regime, where one studies the spectral behavior as $\lambda_j\to\infty.$ 
Perhaps the  most fundamental such result is Weyl's law (often referred to as just ``the Weyl law"), which gives an asymptotic for the growth of the eigenvalue counting function, 
$N(\lambda) := \#\{j:\,\lambda_j\le\lambda\}$,  as $\lambda\to\infty$. In particular, there exists $C_g>0$ so that one has 
\begin{equation}\label{e:weyl_counting}
N(\lambda) = C_g\lambda^n +\mathcal O(\lambda^{n-1}),
\end{equation}
and this result is sharp without additional assumptions on $(M,g)$, as can be seen from the example of the round sphere, 
$\mathbb S^n.$

More generally, one can study, for $\lambda>0$, the \emph{spectral function}, 
defined as the Schwartz kernel of the orthogonal projection 
\[\Pi_\lambda:\bigoplus\limits_{\lambda_j\le\lambda}\ker(\Delta_g + \lambda_j^2),\]
which in the above basis takes the form
\[\Pi_{\lambda}(x,y) =\sum\limits_{\lambda_j\le\lambda}\varphi_j(x)\varphi_j(y).\]
This generalizes the eigenvalue counting function $N(\lambda)$  in the sense that if we compute the trace
and use the fact that the eigenfunctions are orthonormal, we obtain
\[\textnormal{trace}(\Pi_\lambda) = \int\limits_{M}\Pi_\lambda(x,x)\,dv_g(x) = \sum\limits_{\lambda_j\le\lambda} 1 = N(\lambda).\]

Now, let $inj(M)$ denote the injectivity radius of $M$.  The spectral function also has a well-known asymptotic as $\lambda\to\infty$, often 
referred to as a ``pointwise" Weyl law, given by 
\begin{equation}\label{e:weyl_off-diagonal}
\Pi_\lambda(x,y) = \frac{\lambda^n}{(2\pi)^n}\int\limits_{S_y^*M}e^{i \lambda \langle\exp_y^{-1}(x),\xi\rangle_{g_y}}\,\frac{d\xi}{\sqrt{|g_y|}} 
+ \mathscr{R}(x,y,\lambda),
\end{equation}
where there exists an $\ve \in (0, inj(M))$ such that 
\[\sup\limits_{d(x,y)\le \ve}\left|\mathscr{R}(x,y,\lambda)\right| =\mathcal O(\lambda^{n-1})\]
as $\lambda\to \infty$. This result is due to H\"ormander \cite{Hormander1968},  building on earlier results of 
Avakumovi\'c and Levitan \cite{Avakumovic1956,Levitan1952,Levitan1955}. H\"ormander's result is in fact more general than this, 
holding for any elliptic pseudodifferential operator. As with the eigenvalue counting function, the remainder bound here is sharp, which 
can again be seen on $\mathbb S^n$. If one is willing to impose restrictions on the geometry of $M$, it is possible to obtain 
improvements in both \eqref{e:weyl_counting} and \eqref{e:weyl_off-diagonal}. A great deal of research has been done in this 
area; see, e.g.,  \cite{Safarov1988,SoggeZelditch2002,CanzaniHanin2015,CanzaniHanin2018,Berard1977,Keeler2023}, which is 
by no means an exhaustive list.

Our main result, Theorem \ref{thm:main}, to be described in Section \ref{sect:setup_main}, can be thought as a generalization of H\"ormander's (\ref{e:weyl_off-diagonal}) for multiple commuting operators;   we set up the hypotheses for this in  Section \ref{sec: QCI}.

\subsection{QCI Systems and Laplacians}\label{sec: QCI}
In this article, we are interested in the behavior of the  joint spectral function of a  \emph{quantum completely integrable (QCI)}
system of pseudodifferential operators on an $n$-dimensional manifold $M$ without boundary.
{All pseudodifferential operators are classical (or {\it polyhomogeneous}); if $P(x,D)\in\Psi^m(M)$, 
its principal symbol $p(x,\xi)=\sigma(P)(x,\xi)$ is homogeneous of degree $m$ on $T^*M\setminus 0$.}
We assume that $M$ is equipped with a volume form, $dx:=d \mbox{Vol}$;
this includes the case when $(M,g)$ is Riemannian, with $dx=d \mbox{Vol}_g$.

\begin{defn} \label{defn:QCI}
(a) A \emph{quantum completely integrable (QCI) system} on $M$
is a  collection $\overline{P}$  of $n$  self-adjoint, first order pseudodifferential operators,
 $\overline{P}=(P_1, \dots, P_n)$, 
satisfying\smallskip

(i) $\sum_{i=1}^n P_i^2$ is elliptic; 
\smallskip

(ii)  the \emph{Liouville integrability condition}, consisting of 
\begin{equation}\label{eqn:QCIi}  
[P_i, P_j] = 0, \, \, \, \forall i,j=1,\dots, n,
\end{equation}
 and  the nondegeneracy of their (real) principal symbols $p_i=\sigma(P_i)$,
\begin{equation}\label{eqn:QCIii}
d_{x,\xi}p_1 \wedge \dots \wedge d_{x,\xi}p_n \neq 0,
\end{equation} 
on a dense, open, conic subset $\Omega_0 \subset T^*M \backslash 0$.
\smallskip  

(b)     A Riemannian manifold $(M,g)$ is said to be \emph{QCI  }   (or its Laplace-Beltrami operator $\Delta_g$ is said to be QCI) 
    if one can extend $P_1:=\sqrt{-\Delta_g}$ to a QCI system $\overline{P}$ on $M$.
    \end{defn}

This paper   concerns the joint spectral asymptotics of QCI systems, microlocalized to  
regions of phase space exhibiting a stronger non-degenerate behavior than \eqref{eqn:QCIii}.
Recall that for a function $p:T^*M\to\R$, although the full gradient $d_{x,\xi}p$ is  intrinsically defined,
 $d_xp$ and  $d_\xi p$  are not without further structure being imposed, e.g.,  fixing a local coordinate system.
Let $proj_M:T^*M\to M$ denote the natural projection, which is an open map.

\begin{defn} \label{def:fiber_rank}
Suppose  $1\le k\le n$.
We say that a set  $\{p_1, \dots, p_k\}$ of functions, real-valued and homogeneous of degree 1,
has \emph{fiber rank} $k$ on an open conic subset $\tO \subset T^*M\setminus 0$ 
if, for any $(x_0,\xi_0)\in\tO$, there exists a conic neighborhood $U_{x_0,\xi_0}$ of $(x_0,\xi_0)$ 
and  a coordinate system $\ox$ on the  open set $proj_M(U_{x_0,\xi_0})\owns x_0$
such that, with respect to the induced canonical coordinates $(\ox,\oxi)\in T^*\R^n\setminus 0$ on $U_{x_0,\xi_0}$,
$\dim \spann  \{ \nabla_{\oxi} p_1, \dots, \nabla_{\oxi} p_n \} =k$. 
\end{defn}

\begin{rmk}\label{rmk:def1.2}
\emph{On $M=\R^n$, the fiber rank $k$ condition is part of an admissibility condition   
introduced by Tacy \cite[Thm. 0.1 (1,2)]{Tacy2019} in
the study of $L^p$ estimates for semi-classical quasimodes. Geometrically it corresponds
to the level surfaces $\left\{\, \{\xi\, |\, p_i(x_0,\xi)=p_i(x_0,\xi_0)\}\, \right\}_{i=1}^k$ through any point $(x_0,\xi_0)\in\tO$   
being smooth in   $T^*_{x_0}M\setminus 0$ and intersecting  transversely.}

\emph{
For $k=n$, the fiber rank $n$ condition can be expressed in terms  the moment map $\overline{p}$ (defined below) in the following way.  
Let $\pi: T^*M \rightarrow M$ be the canonical projection.
Then the fiber rank $n$ condition  is equivalent with,
for  any level set  $\Sigma$  of the moment map,
$D \pi:T\Sigma \rightarrow TM$ having full rank, i.e., $\pi: \Sigma \rightarrow M$ 
is a submersion.}

\emph{Note also that for $k=n$ the condition described in Def. \ref{def:fiber_rank} is stronger than that in \eqref{eqn:QCIii} and thus,
in the context of a QCI system, such a set $\tO$ is  contained in the set $\Omega_0$ of Def. \ref{defn:QCI}(a).
We refer to a QCI system $\overline{P}$ whose principal symbols   $\{p_1,\dots,p_n\}$
satisfy the fiber rank condition with $k=n$  on $\tO$ as simply satisfying the  \emph{fiber rank condition},
and this will play a prominent role in the statement of the results in Section \ref{sect:setup_main}.}
\end{rmk}

We now  review some of the spectral theory for QCI systems   most relevant to this article.
{The spectral theorem for commuting, unbounded, self-adjoint operators, 
along with the ellipticity of $\sum_{i=1}^n P_i^2$, 
implies there is a  discrete \textit{joint spectrum} of $\overline{P}$,}
\begin{equation} \label{eqn:joint_spectrum}
\Lambda := \{ \overline{\lambda} = (\lambda^{(1)}, \dots, \lambda^{(n)})\in\R^n : 
\exists\, \phi_{\overline{\lambda}}\hbox{ s.t. } P_i \varphi_{\overline{\lambda}} =  
\lambda^{(i)}\varphi_{\overline{\lambda}},\, \forall\, 1\le i\le n \}.
\end{equation}
{Each \textit{joint eigenfunction} $\varphi_{\overline{\lambda}}$ is an eigenfunction of the 
elliptic $\sum P_i^2$ and thus is $C^\infty$.
This also shows that $\Lambda$ is countably infinite and one may write a corresponding orthonormal basis 
of joint eigenfunctions as 
$\{\varphi_{\overline{\lambda}_j} \}_{j \in \mathbb{N}}$; 
we write each $\overline{\lambda}_j \in \Lambda$ in terms of coordinates as 
$\overline{\lambda}_j=(\lambda_j^{(1)}, \dots, \lambda_j^{(n)})$.  }
\bigskip

{\it The goal of this paper is to study the pointwise asymptotics of the Schwartz kernels of the $L^2$ projectors 
associated to certain subsets of $\Lambda$.  }
 \bigskip

We recall an object that further describes $\Lambda$.  The \textit{moment  map} of the QCI system 
(or \textit{momentum map}; see \cite{Mathoverflow_Momentum})
is 
$\overline{p}: T^*M \setminus 0 \rightarrow \mathbb{R}^n \setminus 0$, defined by 
$\overline{p}(x,\xi) = \left(p_1(x,\xi), \dots, p_n(x,\xi)\right) $
for $(x,\xi) \in T^*M\setminus 0$. 
It is well-known that the moment map has a privileged position in the theory of classical integrable dynamical systems, and so it is not 
surprising that its properties continue to play an important role in the spectral theory of QCI systems. 
An example of this is its influence on 
$L^p$ norms and eigenfunction concentration; see the survey article of Toth and Zelditch \cite{TZ01} and references therein.  
Denote by
\begin{equation} \label{e:image_mm}
    \Gamma : = \overline{p}(T^*M\setminus 0)\subset \R^n \setminus 0.
\end{equation}
the image of the moment map, which is conic  due to the homogeneity of the principal symbols $p_i$.
$\Gamma$ gives a first description of the ``geometry" of $\Lambda$, a heuristic  rigorized by some fundamental results 
for QCI systems
due to Colin de Verdi\`ere, which we now describe.
\smallskip 

Let $\overline{P}$ be a QCI system, $\Gamma$ the image of its moment map as in \eqref{e:image_mm}, and suppose that
$\mathcal{C} \subset \mathbb{R}^n\setminus 0$ is a (possibly empty) conic set such that 
$\mathcal{C} \cap \Gamma = \emptyset$.  
Colin de Verdi\`ere's first fundamental result is that $\mathcal{C} \cap \Lambda$ is finite \cite[Theorem 0.6]{CdV79}.  
Since $\Gamma^{\complement}$ is also conic, 
it follows that that $\Lambda \backslash\, B\left(0,R_0\right)\subset \Gamma$ for some $R_0 >0$.

Next, let $\Gamma_{{crit}}$ denote the critical values of the moment map.  
Colin de Verdi\`ere's second fundamental result \cite[Theorem 0.7]{CdV79} 
states that for any non-empty conic set $\mathcal{C}$ with a piecewise $C^1$ boundary $\partial \mathcal{C}$ 
such that $\partial \mathcal{C} \cap \Gamma_{{crit}} = \emptyset$, 
as $\lambda\to\infty$ we have
    \begin{equation}\label{eqn:integ_Weyl}
        \# \{ j \, : \, \overline{\lambda}_j \in \mathcal{C} \cap \Lambda \, \mbox{ and } \, \|\overline{\lambda}_j \| \leq \lambda \} 
        = (2 \pi)^{-n} \mbox{Vol}_{T^*M} \left( \, \overline{p}^{-1} \left\{ \mathcal{C} \cap B(0, \lambda) \right\} \, \right)
        + \mathcal{O}(\lambda^{n-1}), 
    \end{equation}
where $\mbox{Vol}_{T^*M}$ is the  symplectic (Liouville) volume on $T^*M$.  

\subsection{Setup and statement of main results} \label{sect:setup_main}

We are now in a position to describe the main analytical object studied in this paper.
For $\cbar\in \mathbb S^{n-1}$\,  with coordinates 
\footnote{In the semiclassical analysis literature, $\overline{c}$ would usually be referred to as 
$\frac{\overline{E}}{|\overline{E}|}$,  a normalized energy vector.}
$c_i\ne 0,\, 1\le i\le n$,
and $\lambda \in \mathbb{R}_+$, define $I(\lambda, \overline{c}) \subset\R^n$ by 
$$I(\lambda, \overline{c}) := [-|c_1|\lambda, |c_1| \lambda] \times 
[-|c_2| \lambda, |c_2| \lambda] \times \dots \times [-|c_n| \lambda, |c_n| \lambda].$$
  Then the \textit{ joint spectral function of $\overline{P}$ over $I(\lambda, \overline{c})$}, is
  \begin{equation} \label{eqn:joint_spectral_function}
    \Pi_{\lambda, \cbar }(x,y):= \sum\limits_{j=0}^\infty
    \mathds 1_{I(\lambda, \overline{c})}\left(\overline{\lambda}_j \right)
     \varphi_{\overline{\lambda}_j}(x)\overline{\varphi_{\overline{\lambda}_j}(y)}\in C^\infty(M\times M).
\end{equation}
Note  that this orthonormal projector 
 has a non-trivial range for $\lambda$ large enough.

Recall the asymptotic (\ref{e:weyl_off-diagonal}).  Our main theorem is a microlocalized-QCI analogue of the main result in 
\cite{Hormander1968} and is a pointwise version of Colin de Verdi\`ere's ``integrated" Weyl Law for QCI systems.  For a list of 
notation used, see Section \ref{sect:list}.

\begin{thm}\label{thm:main}
Suppose that $\overline{P}=\left(P_1,P_2,\dotsc,P_n\right) $ is a QCI system 
on   a smooth, compact  manifold without boundary, $M$,
which satisfies the fiber rank $n$ condition on  a conic, open subset
$\tO \subset T^*M\setminus 0$. 
Let $\cbar\in \overline{p}(\tO)$, with $c_i\ne 0,\, \forall\, 1\le i\le n$.
For any $(x_0,\xi_0)\in\tO$, let $U_{x_0,\xi_0}\subset\tO$ and $(x,\xi)$ be local canonical coordinates, 
referred to as $(\ox,\oxi)$ in Def. \ref{def:fiber_rank}.
Finally, assume that $M$ is either
Riemannian or equipped with an arbitrary metric, $d$, 
compatible with its locally Euclidean topology.

Then, there exists a nonempty open conic neighborhood $\Omega \subset \tO $ of $(x_0,\xi_0)$;
 constants $\ve_0,\, \lambda_0>0$; 
a pseudodifferential 
cutoff $\,\Psi  \in \Psi^0(M)$, 
 elliptic at $(x_0,\xi_0)$ and microlocally supported in $  \Omega $; 
 a  real-valued, homogeneous function $S \in C^\infty(\tO)$; 
and homogeneous compound symbols $a, b \in S^0(\tO)$, so that 
for all $x,y \in \mbox{proj}_M(\Omega)$ with $d(x,y) \leq \varepsilon_0$, the Schwartz kernel of $ \Psi \Pi_{\lambda, \cbar} \Psi^*$ is
\begin{align}\label{eq:QCI_WeylLaw}
\begin{split}
& \Psi \Pi_{\lambda, \cbar} \Psi^* (x,y)\\
& \hskip 0.5in = \frac{1}{(2\pi)^n}\int\limits_{I(\lambda, \overline{c}) \cap \overline{p}(\Omega)} 
e^{i(S(x,\xi) - S(y,\xi))}b(x,\nabla_\xi S(x,\xi);\xi)a(\nabla_\xi S(y,\xi), y; \xi) \, d\xi+ \mathscr{R}(x,y,\lambda),
\end{split}
\end{align}
where, uniformly for all $\lambda \geq \lambda_0$ such that $\lambda \cbar \notin \Lambda$,
\[\sup\limits_{ \{x,y \in proj_M(\Omega) \, : \, d(x,y) \leq \epsilon_0 \}} \left|\mathscr R(x,y,\lambda)\right| =  \mathcal O(\lambda^{n-1}).\]
Above the diagonal of $M\times M$, $b(x,\nabla_\xi S(x,\xi);\xi)a(\nabla_\xi S(x,\xi), x; \xi) \equiv |\sigma(\Psi)(x,\xi)|^2$.
\end{thm}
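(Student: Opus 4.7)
The plan is to generalize H\"ormander's parametrix-plus-Tauberian method to the $n$-parameter setting allowed by the commutativity $[P_i,P_j]=0$.  The key device is the joint propagator
\[U(t) := e^{it_1 P_1}\cdots e^{it_n P_n},\qquad t=(t_1,\ldots,t_n)\in\R^n,\]
a well-defined unitary family by the functional calculus for commuting self-adjoints, whose Schwartz kernel
\[U(t)(x,y) = \sum_j e^{it\cdot \overline{\lambda}_j}\,\varphi_{\overline{\lambda}_j}(x)\,\overline{\varphi_{\overline{\lambda}_j}(y)}\]
packages all of the joint spectral data.  Writing $\mathds 1_{I(\lambda,\overline{c})}$ as a product of $n$ one-dimensional sharp cutoffs and Fourier-inverting each factor, one expresses $\Psi\Pi_{\lambda,\overline{c}}\Psi^*(x,y)$ as a Fourier-type integral in $t\in\R^n$ of $\Psi U(t)\Psi^*$.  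To make this rigorous, each one-dimensional sharp cutoff is first regularized by convolving with a scaled bump $\rho\in\mathscr S(\R)$ of sufficiently small support, so that $t$ effectively ranges in the small neighborhood of $0$ where a local FIO parametrix is valid.

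The first technical step is constructing this parametrix.  Since $[P_i,P_j]=0$ implies $\{p_i,p_j\}=0$ at the principal-symbol level, the overdetermined Hamilton--Jacobi system
\[\partial_{t_i}\varphi(t,\overline{x},\overline{\xi}) = p_i\bigl(\overline{x},\,\nabla_{\overline{x}}\varphi\bigr),\qquad \varphi(0,\overline{x},\overline{\xi}) = \langle \overline{x},\overline{\xi}\rangle,\]
is Frobenius-integrable on a neighborhood of $\{0\}\times U_{x_0,\xi_0}$ in the canonical coordinates of Def.~\ref{def:fiber_rank}, producing a smooth real phase $\varphi$; a parallel compatible system of transport equations yields an amplitude $\widetilde a$.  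The fiber rank $n$ hypothesis ensures that $\nabla_{\overline{\xi}}\varphi(t,\overline{x},\cdot)$ is a local diffeomorphism for small $t$, so that $\varphi-\langle \overline{y},\overline{\xi}\rangle$ is a bona fide nondegenerate phase function and the resulting oscillatory integral is a genuine local parametrix for $\Psi U(t)\Psi^*$ modulo a smoothing error.  Substituting into the Fourier representation of $\Psi\Pi_{\lambda,\overline{c}}\Psi^*$ gives an oscillatory integral in $(t,\overline{\xi},s)$ with phase
\[\varphi(t,\overline{x},\overline{\xi}) - \langle \overline{y},\overline{\xi}\rangle - t\cdot s,\]
$s$ running over $I(\lambda,\overline{c})$.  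Stationary phase in $(t,s)$ forces $t=0$ and $s = \overline{p}(\overline{x},\overline{\xi})$; the Hessian in $(t,s)$ is automatically nondegenerate by its block structure (the off-diagonal $-I$ blocks guarantee invertibility regardless of $\partial^2_{tt}\varphi$), and evaluating at $t=0$ collapses the phase to $S(\overline{x},\overline{\xi})-\langle \overline{y},\overline{\xi}\rangle$ with $S(\overline{x},\overline{\xi}):=\varphi(0,\overline{x},\overline{\xi})$ real-valued and homogeneous of degree $1$.  A further symmetrization using $\overline{y}=\nabla_{\overline{\xi}}S(\overline{y},\overline{\xi})$ on the Lagrangian diagonal rewrites this as $S(x,\xi)-S(y,\xi)$ and unfolds the compound symbols $a,b$ that track the microlocal contributions of $\Psi$ and $\Psi^*$; on the diagonal $x=y$ the principal-symbol computation at $t=0$ reduces $ba$ to $|\sigma(\Psi)(x,\xi)|^2$, matching the diagonal identity asserted in the theorem.

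The main obstacle is the final Tauberian passage: upgrading the asymptotic of the $\rho$-regularized projector to one for the sharp cutoff $\mathds 1_{I(\lambda,\overline{c})}$ with the claimed $\mathcal O(\lambda^{n-1})$ remainder, uniformly in $\{(x,y) : d(x,y)\le\ve_0\}$.  In dimension one this is the classical H\"ormander Fourier Tauberian theorem; in the $n$-dimensional box setting I would iterate a 1D Tauberian inequality coordinate-by-coordinate, using the non-resonance assumption $\lambda\overline{c}\notin\Lambda$ to avoid joint eigenvalues on $\partial I(\lambda,\overline{c})$ and Colin de Verdi\`ere's integrated Weyl law \eqref{eqn:integ_Weyl} to supply the a priori polynomial spectral-density bounds that each Tauberian step requires.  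Uniformity of the parametrix over the small conic neighborhood $\Omega$---which is precisely where the fiber rank $n$ condition provides enough control on $\nabla_{\overline{\xi}}\varphi$---then yields the uniform remainder estimate, and keeping the remainder at the optimal order in each of the $n$ coordinate directions simultaneously is the most delicate part of the argument.
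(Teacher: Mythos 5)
Your overall strategy -- build an oscillatory parametrix for the joint propagator $U(t)=e^{it_1P_1}\cdots e^{it_nP_n}$, express $\Psi\Pi_{\lambda,\overline c}\Psi^*$ as a Fourier integral of $\Psi U(t)\Psi^*$, extract asymptotics by stationary phase, and finish with a Tauberian argument -- is the natural multi-parameter extension of H\"ormander, and at the level of a blueprint it is sound. Where you depart from the paper is at the parametrix step: the paper explicitly avoids constructing a direct WKB/Lagrangian parametrix for the joint flow, invoking Colin de Verdi\`ere's microlocal normal form (Prop.~\ref{p:CdV_qbnf}) to conjugate $\{P_1,\dotsc,P_n\}$ to the flat model $\{D_{x_1},\dotsc,D_{x_n}\}$ via FIOs $A,B$ associated to a canonical graph, so that all phase-amplitude calculations take place for the explicitly known model propagator (Lemmas \ref{l:prop_diff}--\ref{l:model_prop_Lag}). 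You instead propose to integrate the overdetermined Hamilton--Jacobi system $\partial_{t_i}\varphi=p_i(\overline x,\nabla_{\overline x}\varphi)$ directly; by Frobenius, this is formally consistent when $\{p_i,p_j\}=0$, and it is essentially the ``dual'' picture, since the time-independent version of the same HJ system is precisely what generates the normal-form symplectomorphism $\chi$. The normal-form route buys uniformity and bookkeeping for free (the nested cones $W_{k\ve_0/(n+1)}$, the transport equations, the compound symbols $a,b$), which the paper says is exactly the obstruction to the direct approach you attempt.

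There is a genuine gap in your Tauberian step. A pointwise Weyl law requires a pointwise a~priori density bound of the form
\[
\Psi\Pi_{R_{\overline\mu}}\Psi^*(x,x)\le C(1+\overline\mu)^{\overline m}
\]
uniformly in $x$ on the relevant patch (this is exactly hypothesis \eqref{eqn:on_diag_assump} of Prop.~\ref{taub_prop}), and it is this \emph{on-diagonal} estimate that is iterated coordinate-by-coordinate in the proof of Prop.~\ref{taub_prop}. You propose to furnish the a~priori density bound from Colin de Verdi\`ere's integrated Weyl law \eqref{eqn:integ_Weyl}, but that result controls only $\displaystyle\int_M\Pi_{R_{\overline\mu}}(x,x)\,dv_g$, not its pointwise value, and an $L^1$ bound on the diagonal does not imply an $L^\infty$ bound. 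In the paper, the required on-diagonal input is produced from the smoothed spectral measure asymptotics (Prop.~\ref{prop:smooth_spec_measure_microlocal}) via Lemma~\ref{l:smoothed_measure_rough_measure}; if you follow your route, you must likewise prove the unit-box on-diagonal bound from your own parametrix and stationary phase analysis rather than cite \cite{CdV79}.

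Two further issues to revisit. First, your stationary phase ``in $(t,s)$'' treats $s$ as a free oscillatory variable, but $s$ is integrated over the bounded box $I(\lambda,\overline c)$; the phase is linear in $s$, so there is no interior stationary point in $s$, and the boundary contributions from $\partial I(\lambda,\overline c)$ are precisely the obstruction the Tauberian argument exists to handle. The rigorous step is to regularize first (as you briefly mention), compute the stationary phase in $(t,\overline\xi)$ for each fixed $\overline\mu$ to obtain the smoothed spectral measure asymptotics (the analogue of Prop.~\ref{prop:smooth_spec_measure_microlocal}), and only then integrate $\overline\mu$ over $I(\lambda,\overline c)\cap\overline p(\Omega)$. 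Second, your ``symmetrization'' step asserting $\overline y=\nabla_{\overline\xi}S(\overline y,\overline\xi)$ to replace $\langle\overline y,\overline\xi\rangle$ by $S(\overline y,\overline\xi)$ is not a general identity of the generating function ($\nabla_\xi S(\overline y,\xi)$ is the transformed base coordinate $X(\overline y,\xi)$, not $\overline y$); the paper's phase $S(x,\xi)-S(y,\xi)$ with $\xi\in\overline p(\Omega)$ is instead produced automatically by composing the FIO kernels of $B$, the model spectral measure, and $A^*$ and applying stationary phase twice -- the key fact being that the Euler relation $\nabla_\xi S(y;\mu)\cdot\mu=S(y;\mu)$ for the degree-one homogeneous $S$ collapses the composed phase to $S(y;\mu)-S(w;\mu)$.
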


\begin{rmk} Note that $\epsilon_0$ above is independent of $\lambda$, so that the representation
\eqref{eq:QCI_WeylLaw} is valid on a fixed tubular neighborhood of the diagonal in 
$\mbox{proj}_M(\Omega)\times \mbox{proj}_M(\Omega)$, uniformly in $\lambda\ge \lambda_0$.
\end{rmk}

\begin{rmk}
    Choosing $\overline{c}$ so that $\lambda \cbar \notin \Lambda$ for some sufficiently large values of $\lambda$ is always possible 
    since $\Lambda$ is discrete.  We emphasize that the remainder estimate on $\mathscr{R}$ does not depend on 
    $dist \left( \lambda \overline{c}, \Lambda \right).$
\end{rmk}

\begin{rmk} \label{rem:coord_free}
    It is intuitively clear that the coordinate-free form of the expression (\ref{eq:QCI_WeylLaw}) should have as its phase an 
    exponential-type map 
associated with the joint Hamiltonian flow, like in H\"ormander's expression (\ref{e:weyl_off-diagonal}) with the Hamiltonian flow 
generated by $|\xi|_{g}$, but we leave this for future exploration.
\end{rmk}

\begin{cor} \label{cor:main}
Suppose $(M,g)$  is Riemannian
 and $\left(-\Delta_g\right)^\frac12$ is QCI, with QCI system $\overline{P}$. 
 Then, with the hypotheses and notation of  Theorem \ref{thm:main},
if $x,y\in \mbox{proj}_M(\Omega)$ are such that $d_g(x,y) \le \frac{c}{\lambda} < inj(M)$, one has 
\begin{equation}\label{eqn:cor}
\Psi\Pi_{\lambda, \cbar}\Psi^*(x,y) = \frac{1}{(2\pi)^n} \int\limits_{I(\lambda, \overline{c}) \cap \overline{p}(\Omega)} 
e^{i(x-y)\cdot \nabla_xS(x;\xi) }\left|\sigma(\Psi)(y,\xi)\right|^2\,d\xi + \mathscr R(x,y,\lambda),
\end{equation}
with
\[\sup\limits_{d_g(x,y)\le \frac{c}{\lambda}}\left|\mathscr R(x,y,\lambda)\right| = \mathcal O(\lambda^{n-1}).\]
\end{cor}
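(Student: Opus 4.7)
The plan is to take the conclusion of Theorem \ref{thm:main} as the starting point and, under the stronger hypothesis $d_g(x,y)\le c/\lambda$, simplify both the phase and the amplitude of the oscillatory integral so that all resulting errors can be absorbed into $\mathscr R$. On the integration region $I(\lambda,\cbar)\cap\overline{p}(\Omega)$ one has $|\xi|=\mathcal O(\lambda)$ and Lebesgue volume $\mathcal O(\lambda^n)$, so it suffices to perturb the integrand pointwise by $\mathcal O(\lambda^{-1})$ uniformly in $\xi$; integrating then yields a contribution of size $\mathcal O(\lambda^{n-1})$, which fits inside the required remainder.

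For the phase, I would Taylor expand $S(y,\xi)$ in $y$ about $y=x$, working in the local coordinates supplied by Theorem \ref{thm:main} (valid since $c/\lambda<\inj(M)$ places $x$ and $y$ in a common coordinate chart, with coordinate distance comparable to $d_g$). This gives
\[
S(x,\xi)-S(y,\xi)=(x-y)\cdot\nabla_x S(x,\xi)+R(x,y,\xi),
\]
with $|R(x,y,\xi)|\le C|x-y|^2\sup_z|\partial_x^2 S(z,\xi)|$. Because $S$ is homogeneous of degree one in $\xi$, so is $\partial_x^2 S(\cdot,\xi)$, and hence $|R|=\mathcal O(|x-y|^2|\xi|)=\mathcal O(\lambda^{-1})$ throughout the integration region. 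Consequently $e^{i(S(x,\xi)-S(y,\xi))}=e^{i(x-y)\cdot\nabla_x S(x,\xi)}\bigl(1+\mathcal O(\lambda^{-1})\bigr)$, which is the phase appearing in \eqref{eqn:cor} up to an $\mathcal O(\lambda^{-1})$ perturbation of the integrand.

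For the amplitude, the diagonal identity in the last line of Theorem \ref{thm:main} gives $b(x,\nabla_\xi S(x,\xi);\xi)\,a(\nabla_\xi S(x,\xi),x;\xi)=|\sigma(\Psi)(x,\xi)|^2$. Since $a,b,S$, and $\sigma(\Psi)$ are smooth in their spatial arguments and $a,b$ are compound symbols of order zero (whose spatial derivatives are bounded uniformly in $\xi$ on the conic integration region), and since $\nabla_\xi S$ is homogeneous of degree zero in $\xi$, one obtains
\[
b(x,\nabla_\xi S(x,\xi);\xi)\,a(\nabla_\xi S(y,\xi),y;\xi)=|\sigma(\Psi)(y,\xi)|^2+\mathcal O(|x-y|)=|\sigma(\Psi)(y,\xi)|^2+\mathcal O(\lambda^{-1})
\]
uniformly in $\xi$. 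Combining the phase and amplitude approximations and integrating produces \eqref{eqn:cor} with a remainder of size $\mathcal O(\lambda^{n-1})$. The argument is mostly bookkeeping; the main subtlety is verifying that each individual substitution (for example $\nabla_x S(y,\xi)$ for $\nabla_x S(x,\xi)$, or $|\sigma(\Psi)(x,\xi)|^2$ for $|\sigma(\Psi)(y,\xi)|^2$) costs only $\mathcal O(\lambda^{-1})$ uniformly over the integration region, which in each case follows from the degree-one homogeneity of $S$ together with the symbol-class properties of $a$, $b$, and $\sigma(\Psi)$.
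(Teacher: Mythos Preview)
Your proposal is correct and follows essentially the same route as the paper's proof: Taylor-expand $S(x,\xi)-S(y,\xi)$ about the diagonal to linearize the phase, use the identity $b\cdot a=|\sigma(\Psi)|^2$ on the diagonal for the amplitude, and absorb all $\mathcal O(\lambda^{-1})$ perturbations of the integrand into $\mathscr R$ after integrating over a box of volume $\mathcal O(\lambda^n)$. If anything, you are slightly more explicit than the paper about how the amplitude is replaced by $|\sigma(\Psi)(y,\xi)|^2$ (the paper simply invokes the diagonal identity without spelling out the $\mathcal O(|x-y|)$ perturbation), but the argument is the same.
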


\begin{rmk}
    Observe that the integrals (\ref{eq:QCI_WeylLaw}) and (\ref{eqn:cor}) could be re-expressed as integrals over 
    $\Omega \cap \overline{p}^{-1}(I(\lambda, \overline{c}))$ thanks to the fiber rank $n$ condition making $\overline{p}$ 
    a diffeomorphism on the fibers $T^*_x M.$  This would result in expressions closer to those in \cite{Hormander1968}, 
    covering the case of a single elliptic operator, which involved integrating over sub-level sets of the symbol.  
    However, this would result in more complicated phases and amplitudes in the referenced equations.
\end{rmk}

\subsection{Discussion of Theorem \ref{thm:main}}\label{sec:disc Thm 1}

\subsubsection{New information and implications}\label{sec: new info}

In the process of proving the main result, we spend a considerable portion of this article  deriving the asymptotics of a (microlocalized) 
smoothed joint spectral measure, $\rho(P_1 - c_1\lambda ) \circ \dots  \circ  \rho(P_n - c_n \lambda) $,  
encapsulated in Prop. \ref{prop:smooth_spec_measure_microlocal}, which is itself  possibly of independent interest. 
In fact, this result on the smoothed spectral measure immediately implies that joint eigenfunctions, 
microlocalized to $\Omega$ enjoy $L^2-L^{\infty}$ bounds 
which are $\mathcal{O}(1)$, a  considerable improvement over the standard H\"ormander bound of $\mathcal{O}(\lambda^{1/2})$.  
This recovers a result of Tacy \cite[Thm. 0.2]{Tacy2019} under the same fiber-rank condition, which generalized  previous work of 
Sarnak \cite{Sarnak_letter} and Marshall \cite{Marsh} on locally symmetric spaces.  We  also mention the improved sup-norm 
bounds of  Galkowski-Toth \cite{GalkowskiToth2020} under a Morse-type assumption which is less stringent than our fiber rank condition. 

It would be interesting to see whether our explicit description of the  kernel lends itself  to improved $L^p$ restriction estimates, 
along the lines of \cite{BGT07,Hu2009}, 
for  submanifolds that lie within $proj_M(\Omega)$.  
Such a question for restriction to curves in 2D was posed by Toth, with our result being consistent with
 \cite[Theorem 3 (i)]{Toth09}, given that we are using a microlocal cutoff. 

The content of the previous two paragraphs
demonstrate a well-known phenomenon in 2D,  most conveniently visualized on surfaces of revolution: 
the ``zonals", ``Gaussian beams", 
and the ``lone" $e^{i \theta \cdot \xi_{\theta}}$ Fourier element (see Section \ref{sec:exs}) are what contribute to the 
non-trivial sup-norm growth due to the $\mathbb S^1$ action.  
What is new here is that despite eliminating the $L^p$-norm saturating 
basis elements and the lone Fourier element, we still obtain a pointwise $\sim \lambda^n$ asymptotic in Theorem 1.

It should not be surprising that the fiber rank condition can be framed through the moment map $\overline{p}$; 
as mentioned in Remark \ref{rmk:def1.2}, it is equivalent with 
 $\pi: \Sigma \rightarrow M$ being a submersion for each level set $\Sigma$ of $\overline{p}$.
We emphasize that, at least to the knowledge of the authors, the only setting where $\Omega$ can be 
taken as all of $T^*M$ (and therefore $\pi|_{\Sigma}$ is a global diffeomorphism) is when $M$ is the flat torus 
(see \cite[p. 919]{GalkowskiToth2020} and   \cite[p. 17]{TZ01}).  In fact, by topological considerations \cite{TZ01}, 
it should be rare for a QCI system to have its $\Omega$ equal to all of $T^*M$, 
thus making  microlocalization by $\Psi$ reasonably motivated.  
For the torus one could choose the cutoff $\Psi$ not microlocally but via the spectral theorem and functional calculus.  
The resulting $\Psi$  allows us to count eigenvalues in cones contained inside of $I(\lambda, \overline{c})$, obtaining a 
non-microlocalized pointwise analogue of (\ref{eq:QCI_WeylLaw}) in that setting.   

We continue our discussion by revisiting a result of Duistermaat \cite[Thm. 1.1.1]{Duistermaat74} on quasimodes for semiclassical 
pseudodifferential operators $P(x, \frac{1}{\lambda} D_x)$.  Under the  version of our fiber rank assumption on the principal symbol of 
a single Hamiltonian $P$, i.e., $k=1$, Duistermaat showed that $P$ admits $O(\lambda^{-\infty})$ quasimodes in the form of WKB/
Lagrangian states.  Our Prop. \ref{prop:smooth_spec_measure_microlocal} gives a (microlocalized) QCI version of this result when 
$y$ is fixed and $x$ is allowed to vary: that is, the QCI system $\{P_1, \dots, P_n\}$ 
admit $\mathcal{O}(\lambda^{-\infty})$ (microlocalized) joint quasimodes in the form of WKB/Lagrangian states.

\subsubsection{Future questions}\label{sec:future}

One of the original motivations of this article was the connection between pointwise Weyl laws and zero sets of random linear 
combinations of joint eigenfunctions.  Statistics of the number of connected components of the zero sets of random functions is an area 
with a wealth of challenging and difficult questions, with a beautiful piece of science from Bogomolny-Schmitt \cite{BS02} addressing the 
connection between bond percolation and the complements of zero sets of random eigenfunctions  (that is, their \emph{nodal domains}).  
Two fundamental works of Nazarov-Sodin \cite{NS09, NS16} concerned combinations of eigenfunctions \emph{for the single operator of 
$-\Delta_g$} (and say sampled from the spectral interval $[\lambda, \lambda + M]$ where $M$ is constant but sufficiently large) and their 
nodal domains.  These breakthrough articles established, amongst other results, asymptotics for the expected number of nodal domains 
for large $\lambda$.  The work \cite{NS16} gave an important criterion for such an asymptotic to hold, specifically that the spectral 
projector's Schwartz kernel in (\ref{e:weyl_off-diagonal}) has its integration measure be absent of atoms (in the terminology of 
\cite{NS16}, its ``spectral measure" $\rho$ should have no atoms), which is clearly the case for the Laplacian. 
A variety of other interesting works were  motivated  by \cite{NS16}; 
see the survey article of Wigman \cite{Wig03} and the references therein.

In the case of ensembles of joint eigenfunctions, say whose eigenvalues are sampled from our conic regions $\overline{p}(\Omega)$, a 
description that could be used to verify Nazarov-Sodin's criterion has been absent.  A motivation for proving  Theorem 1 
was to give a non-trivial piece of evidence 
that the actual joint projector's kernel will satisfy this criterion.  
Our hope is to suggest new random ensembles of eigenfunctions, smaller 
than those considered by Nazarov-Sodin, but sharing their asymptotic.  
We believe that  QCI systems are a setting 
in which the exponential concentration of  nodal domain counting functions might be  possible; 
to prove see \cite{NS09, Roz17 } for the integrable cases 
of $\mathbb S^2$ and the torus.
  This an interesting question for further research. 

It would be remiss to not touch upon \cite{CdV79}.  We should mention that Colin de Verdi\`ere's ``integrated" Weyl law 
(\ref{eqn:integ_Weyl}), for the cones described in Theorem \ref{thm:main}, should follow from our asymptotic (\ref{eq:QCI_WeylLaw}) 
albeit with the appropriate Tauberian result.  Our pointwise Tauberian result Proposition \ref{taub_prop} is not robust enough to recover 
\cite[Theorem 1.4]{CdV79} as one needs to consider the integrated asymptotics coming from terms of the form $(I-\Psi) \Pi \Psi^*$ and 
$\Psi \Pi (I-\Psi^*)$.  This appears to require additional joint wave trace analysis, itself drawing upon the clean intersection calculus 
\cite{DuistermaatGuillemin1975}.  We did not pursue the necessary extensions and leave these problems for interested readers. 

\subsection{Outline of the proof}\label{sec:outline}

A major obstruction encountered in the QCI case, say when trying to follow most proofs of pointwise Weyl laws for single elliptic, 
self-adjoint operators, is the absence of 
amenable oscillatory expressions for the joint propagator $e^{it_1 P_1} \circ \dots \circ e^{i t_n P_n}$.  That is, it is difficult in general to 
control the multiple integrations needed when manipulating the various Schwartz kernels of the individual propagators. The key idea 
towards Theorem \ref{thm:main} is a microlocal normal form due to Colin de Verdi\`ere \cite[Thm. 2.1]{CdV79} which generalizes that 
of Duistermaat-H\"ormander \cite[Prop. 6.1.4]{DuistermaatHormander72} in the case of a single operator.  This normal form 
accomplishes a few things:
thanks to the power of the FIO calculus \cite{FIO1}, 
it allows us in Section \ref{sect:mnf}
to rely almost solely on explicit calculations for the joint propagator of the model QCI system 
$\{D_{x_1}, \dots, D_{x_n}\}$ on $\mathbb{R}^n$. 
The FIO calculus for operators associated to local canonical 
graphs supplies the generating functions $S$ for certain symplectomorphisms attached to $\{P_1, \dots, P_n\}$ appearing in 
the expression (\ref{eq:QCI_WeylLaw}); see Propositions \ref{p:DH_nf} and \ref{p:CdV_qbnf} for more details.  These 
advantages emerge in our calculation of the Schwartz kernel of the smoothed spectral measure 
$\rho(P_1 - c_1\lambda ) \circ \dots  \circ  \rho(P_n - c_n \lambda)$ which appears in Section \ref{sect:microlocal_asymps}.  

The passage from the smoothed spectral measure $\rho(P_1 - c_1\lambda ) \circ \dots  \circ  \rho(P_n - c_n \lambda) $ to the pointwise 
Weyl law (\ref{eq:QCI_WeylLaw}) comes, as usual, from an appropriate Tauberian theorem.  Our approach is to adapt a robust 
argument of Sogge \cite[Section 3.2]{SoggeBook2017}, for integrated Weyl laws in the single-operator case, to multiple operators.  Such 
a strategy was used previously in the work of Keeler \cite{Keeler2023} on pointwise Weyl laws for Laplacians in settings with non-
positive curvature.  Our relevant results, and the final steps of the proof of Theorem \ref{thm:main}, 
are described in Section \ref{sect:tauberian}.

\subsection{Notation} \label{sect:list}

To assist the reader, here is a list of notation used frequently in the paper.  

\begin{itemize}

\item $proj_M: T^*M \rightarrow M$ is the invariantly defined spatial projection, $(x;\xi) \mapsto x.$
\smallskip

\item $\Omega_0 \subset T^*M$ is the open, dense subset on which the $dp_1 \wedge \dots \wedge dp_n \neq 0$.  This appears in 
Definition \ref{defn:QCI}.  The set $\Omega$ in Theorem \ref{thm:main} appears in equation (\ref{eqn:Omega_set}, just before our 
Proposition \ref{prop:smooth_spec_measure_microlocal}. 
\smallskip

\item $\overline{P}=(P_1,\dots,P_n)$ will denote a QCI system on $M$, with principal symbols\linebreak
$\overline{p}=(p_1,\dots,p_n)$.  
\smallskip

\item The local model for a QCI system (see Prop. \ref{p:CdV_qbnf}), 
is $\overline{P^0}= \left(P^0_1,\dots,P^0_n\right)$ on $\R^n$, where $P_i^0 = D_{x_i}$ on $\mathbb{R}^n$, with  
principal symbols $p^0_i=\xi_i, \, i=1, \dots, n$, resp.
\smallskip

    \item $\Lambda \subset \mathbb{R}^n$ is the joint spectrum of the QCI system $\overline{P}$,  
whose elements are the joint eigenvalues, denoted $\overline{\lambda}_j =\left(\lambda_j^{(1)},\dots,\lambda_j^{(n)}\right)$.   
A corresponding orthonormal basis of eigenfunctions will be denoted $\varphi_{\overline{\lambda}_j}.$
\smallskip

 \item 
 For notational convenience, the Schwartz kernel of a linear operator $A$ will be denoted by either 
 $K_A(\cdot,\cdot)$ or $A(\cdot, \cdot)$
 (e.g., as in \eqref{eq:QCI_WeylLaw}).
 It should be clear from the context whether it needs to be treated as a distribution or is actually a smooth function.  
 \smallskip

 \item As in Sec. \ref{sect:setup_main}, $\overline{c} = (c_1, \dots, c_n) \in \mathbb S^{n-1}$ is a fixed unit vector, 
 all of whose components are non-zero. For $\lambda>0$, 
 $I(\lambda, \overline{c}) = \prod_{i=1}^n  [-|c_i| \lambda, |c_i| \lambda]$
is the spectral support of the orthonormal projection operator $ \Pi_{\lambda, \cbar }$ from \eqref{eqn:joint_spectral_function},
whose Schwartz kernel  is, as in the previous bullet point,  denoted $ \Pi_{\lambda, \cbar }(x,y)$.
 The vector  $\overline{c}$ will be chosen in Sec. \ref{sect:final_steps}
 \smallskip

    \item $\epsilon_0 > 0 $ is a fixed number connected to the joint Hamiltonian flow generated by the symbols $p_1, \dots, p_n$ and the 
    open set $U$ as in the symplectic normal form, Prop. \ref{p:DH_nf}.  
    It appears for the first time in equation (\ref{e:nested_flow_nbhds}) and is finally chosen  in equation (\ref{eqn:Omega_set}). 
    In terms of $\epsilon_0$, we  define
     $J(\epsilon_0):=\left(-\frac{\epsilon_0}{n+1},\frac{\epsilon_0}{n+1}\right)\subset\R$ and its $n$-fold Cartesian product, 
     $\overline{J}(\epsilon_0):=J(\epsilon_0)^n$.
 \smallskip
    
    \item $\{ W_{k \epsilon_0/(n+1)} \}_{k=1}^{n}$ is a nested chain of conic open subsets of another open set $U$ appearing in
Prop. \ref{p:DH_nf} and which determines the set $\Omega$ appearing in Thm. \ref{thm:main}.  
This chain first appears in equation (\ref{e:nested_flow_nbhds}).
 \smallskip
 
    \item If $U\subset T^*M\setminus 0$ is open and conic, $Q_U$ 
    will often be used to denote an element of $\Psi^0(M)$ 
    (with $q_U=\sigma(Q_U)\in S^0$ its principal symbol) 
    having microlocal support 
    contained in $U$, i.e., such that $\mbox{ supp } Q_U \subset U$, in the sense that $Q_Uf\in C^\infty$ 
    whenever $WF(f)\cap{U}=\emptyset$.
     If $U'\subset U$, then $Q_{U',U}$ will denote a $Q_U$ 
    for which $Q_{U',U}\equiv I$ on $U'$, i.e., $Q_{U',U}-I$ is smoothing on $f$ with $WF(f)\subset U'$.
  
\end{itemize}

\subsection{Acknowledgements}\label{subsec:ack}

We would like to thank Matthew Blair, Iosif Polterovich, Zeev Rudnick, Melissa Tacy and Jared Wunsch
for generously making helpful comments on a draft version of this paper. AG was partially supported by US 
NSF award DMS-2204943, SE by the NSERC Discovery Grant Program, 
and BK by a CRM-ISM Fellowship and AARMS Fellowship during the writing of this article.


\section{Examples} \label{sec:exs}


\subsection{Flat tori}\label{subsec:torus} Let $M=\mathbb T^n=\R^n/{(2\pi \mathbb Z)^n}$ be the standard $n$-dimensional flat torus,
with coordinates $x=(x_1,\dots,x_n)\in [0,2\pi)^n$ and dual coordinates $\xi=(\xi_1,\dots,\xi_n)$ on the fibers of $T^*M$.
Set $\mathcal P=\{D_1,\dots,D_n\}$, where $D_j=\frac1{i}\partial_{x_j},\, 1\le j\le n$.
Then $\mathcal P$ is a QCI system, for which we may take $\Omega_0$ to be all of $T^*M\setminus 0$;
since $\sigma(D_j)=\xi_j$, this satisfies the full fiber rank condition everywhere as well, so may take $\tO=\Omega_0$.
The moment map is $\overline{p}(x,\xi)=(\xi_1,\dots,\xi_n)$,
and $ \Gamma = \overline{p}(T^*M\setminus 0) =\R^n \setminus 0$,
so that in order for  the set $\mathcal C$ mentioned in Sec. \ref{sec: QCI} to be disjoint from $\Gamma$, 
$\mathcal C$ must be empty.
The joint spectrum of $\mathcal P$ is, of course,   $\Lambda=\mathbb Z^n$. 
By   Strichartz  \cite{Strichartz1972},  taking $\Psi \in \Psi^0$ to be translation invariant
results in a  smooth joint spectral cutoff, and  by translation-invariance has  spatial support 
which is  all of $\mathbb{T}^n$.
\medskip

\subsection{Ellipsoids}\label{subsec:ellipsoid}
Now take $(M,g)$ to be an ellipsoid of revolution about the $[-a,a]\subset x_1$-axis in $\mathbb{R}^3$. 
Let  $N=(-a,0,0),\, S=(a,0,0)$ be the poles; $\gamma=M\cap\{x_1=0\}$ the equator;  
$s$ and $\theta$  the arc-length  (from $N$ to $S$ along the ellipse)    and  angular  coordinates, resp., 
with dualn coordinates $\xi=(\sigma, \Theta)$.
Let $\mathcal P=\{P_1,P_2\}$ with
$P_1=(-\Delta_g)^\frac12$ and $P_2 = D_{\theta}$, whose principal symbols are $p_1=|\xi|_g,\, p_2=\Theta$, resp.
 Then $\mathcal P$ is  a QCI system  on $M$, satisfying the fiber rank condition on
$$\Omega_0=\tO=T^* M \backslash\left(  T_N^*M \cup T_S^*M \cup  T^*\gamma\right),$$
where $T^*\gamma=\{ (x,\xi)\in T^*M|_\gamma\, :\, \sigma=0\}$. 
(Note that each of $T^*M|_N,\,  T^*M|_S$ and $  T^*\gamma$ is a codimension  
two conic  submanifold of $T^*M$.)
Next, notice that the set $F$ of critical points of the moment map $\overline{p}(x,\xi)=\left(|\xi|_g, \Theta\right)$ is only $ T^* \gamma$.  
Colin de Verdi\`ere's counting result applies to this $M$ for $k=n=2$, 
with the conic set $\mathcal{C} = \Omega$; 
it follows that 
\begin{equation}
    \mbox{Vol} \left( \, \overline{p}^{-1} \left\{ \mathcal{C} \cap B(0, \lambda) \right\} \, \right) = c_0 \lambda^2 + \mathcal{O}(\lambda)
\end{equation} 
where $c_0 >0$ and depends on $\mathcal{C} \cap  S^{*}M.$  Hence, Theorem \ref{thm:main} in the current paper is 
an off-diagonal pointwise analogue of Colin de Verdi\`ere's 
 estimate, for a restricted class of cones.  Repeating the remark made just before Theorem \ref{thm:main}, in the context of H\"ormander's 
result (\ref{e:weyl_off-diagonal}), our main result is a natural QCI-extension but with the drawback of being stated in local coordinates.  

\medskip

\subsection{Surfaces of revolution}\label{subsec:surf rev}

Suppose that $g$ is a metric on $\mathbb S^2$ for which there exists an effective action of $\mathbb S^1$,
with fixed points denoted by $N$ and $S$.
Let $\mathcal P=\{P_1,P_2\}$ with $P_1 = \sqrt{-\Delta_g}$ and $P_2 = \partial_{\theta}$.  
We examine this system over the three standard coordinate charts making up an atlas,
covering $\mathbb S^2\setminus \{N,S\}$ and neighborhoods of $N,\, S$, resp..

We take $(\sigma, \theta) \in (0,L) \times [0,2 \pi]$ to be geodesic polar coordinates centered at $N$, 
but not including $N$ or $S$, with $\theta=0$ some fixed  meridian $\gamma$ from $N$ to $S$.  
The dual coordinates we denote $(\Sigma,\Theta)$.
On this  chart, covering  $\mathbb S^2\setminus\{N,\, S\}$, the metric can  be written as
$g = d \sigma^2 + a(\sigma)^2 d \theta^2$ where $a: [0,L] \rightarrow \mathbb{R}^+$ satisfies $a(0)=a(L)=0$ and 
$a^{(2k)}(0)=a^{(2k)}(L) = -1$, for some integer $k \ge 1$.  We assume $a$ has finitely many 
critical points, $\sigma_j \in (0,L)$, with $j=1,...,M$.  In this chart, $p_1 = \sqrt{\Sigma^2 + \frac{\Theta^2}{a^2(\sigma)}}$ 
and $p_2 = \Theta$, from which one sees immediately that $\nabla_{\Sigma, \Theta} p_1$ and $\nabla_{\Sigma, \Theta} p_2$ 
are linearly independent over this chart if and only if $\Sigma \neq 0.$  Therefore, $\tO$, and thus any $\Omega$, 
must exclude $\{ \Sigma = 0 \}$.

On the other hand, using Riemannian normal coordinates $x_1,x_2$ \`a la the exponential map in the charts containing $N$ and $S$, 
one has $p_1 = \Op \left( \sqrt{\sum_{i,j} g^{ij}(x) \xi_i \xi_j} \right)$ and $p_2 = \Op( x_2 \xi_1 - x_1 \xi_2)$.  It is well-known that $P_1$ 
and $P_2$ form a QCI system.  

Note that $d_{\xi}p_2$ equals $(0,0)$ at both $N$ and $S$, showing these points must be excluded.  Furthermore, a calculation shows that $H_{p_1}$ and $H_{p_2}$  are parallel only on $\bigcup_{j=1}^M T^* \gamma_j$,
where  $\gamma_j = \{ \sigma = \sigma_j \} \subset \mathbb S^2$.
Now, let $\tO = T^* ( \mathbb S^2 \backslash \{N,S \}) \backslash \bigcup_{j=1}^M T^* \gamma_j$,     
Therefore, the fiber rank condition fails on these two coordinate charts only above 
$N,S$ and thus any projection of $\Omega$ must exclude these points.  
Hence, Theorem \ref{thm:main} applies for any choice of $\Omega$ which excludes $T_N \mathbb S^2, T_S \mathbb S^2$, 
and $\{\Sigma=0\}.$

\subsection{Liouville tori}\label{subsec:liou torii} In a slightly different vein, consider the two-torus $M = \mathbb{R}^2 / \mathbb{Z}^2$ 
but now with two smooth, positive, and periodic functions $U_1, U_2: \mathbb{R}/\mathbb{Z} \rightarrow \mathbb{R}^+$ 
where we assume that $U_1(x_1) - U_2(x_2) > 0$.  
Take the metric $g = \left(U_1(x_1) - U_2(x_2) \right) \left( dx_1^2 + dx_2^2 \right)$ with coordinates 
$(x_1,x_2) \in [0,1] \times [0,1].$  Take $p_1 = \sqrt{\frac{\xi_1^2 + \xi_2^2}{U_1(x_1) - U_2(x_2)}}$ and 
$p_2 = \sqrt{\frac{U_2(x_2)}{U_1(x_1) - U_2(x_2)} \xi_1^2 + \frac{U_1(x_1)}{U_1(x_1) - U_2(x_2)} \xi_2^2}$.  
One knows that $P_1:=\Op(p_1), P_2:=\Op(p_2)$ form a QCI system with $P_1$ 
being the so-called Liouville Laplacian \cite{TZ01}.

Define $\Omega_1 = \{ (x; \xi) \in T^*M \, : \, \xi_1, \xi_2 \neq 0 \}.$  
The linear independence of the differentials $dp_1, dp_2$ on $\Omega$ is immediate.  Set $m(x;\xi) = 
\left|\begin{matrix}
    \partial_{x_1} p_1 & \partial_{x_1} p_2 \\
    \partial_{x_2} p_1 & \partial_{x_12} p_2
    \end{matrix} \right|
$, 
whose nonvanishing at a point $(x;\xi)$ implies that 
$\xi \cdot \partial_{\xi} \notin \mbox{span}\{H_{p_1}, H_{p_2} \}$ at $(x;\xi)$, 
which in turn shows that the hypotheses of Prop. \ref{p:DH_nf} are satisfied there.  
Setting 
$\Omega_2 = \{ (x; \xi) \in T^*M \, : \, m(x;\xi) \neq 0 \}$, then   
thanks to the assumption that $U_1 > U_2$, we see the fiber rank condition also holds on $\Omega := \Omega_1 \cap \Omega_2$.

\subsection{Quantum asymmetric top}\label{subsec:asymm top}
Toth \cite{TothVarious} studied a quantum version of the asymmetric top. This is a QCI system on $\mathbb S^2$, consisting of $P_1=\sqrt{-\Delta_{\mathbb S^2}}$, the Laplace-Beltrami operator for the standard (round) metric, and $P_2=\sqrt{-\Delta_{SO(3)}'}$, 
Laplace-Beltrami operator for another metric, induced from the action of $SO(3)$.  From \cite[Eqn. (2.21),(2.22)]{TothVarious},
the squares of the symbols of these two operators can be written as quadratic forms with the same major and minor axes.
Simplifying the coordinates and  notation from \cite{TothVarious}, they are conformal to 
$$p_1(u,v,\xi,\eta)=a(u)\xi^2+a(v)\eta^2,\quad p_2(u,v,\xi,\eta)=4va(u)\xi^2+4ua(v)\eta^2.$$
These have collinear gradients only on the axes; thus, we may take $\tO=\{\xi\ne0,\, \eta\ne 0\}$.


\section{The Joint Projector and the Joint Wave Kernel} \label{sect:joint_proj_wave}

In this section, we review how to relate the joint spectral projection kernel to the joint wave kernel. 
For  $\overline{t} = (t_1, \dots, t_n) \in \mathbb{R}^n$, consider the Schwartz kernel of the joint propagator, 
\[e^{i\tbar \cdot \overline P}(x,y) := e^{i t_1 P_1} \circ \dots \circ e^{i t_n P_n}(x,y).\]
Recalling the image of the moment map, $\Gamma$, from \eqref{e:image_mm},
let $\cbar \in \Gamma\cap \mathbb S^{n-1}$ (to be chosen later) with all  $c_i\ne 0$.
 Let $\Psi$ be an order-zero pseudodifferential operator. Then, we can rewrite the microlocalized joint spectral function as 
\begin{equation} \label{eqn:micro_joint_proj_spectral}
\Psi\Pi_{\lambda, \cbar }\Psi^*(x,y)= \sum\limits_{j=0}^\infty 
 \mathds 1_{I(\lambda, \overline{c})}\left(\overline{\lambda}_j \right)
\left( \Psi\varphi_j \right)(x) \left( \overline{\Psi\varphi_j(y)} \right).
\end{equation}

The Fourier transform of the characteristic function $\mathds 1_{[-\lambda,\lambda]}$ is 
\[\int\limits_{-\infty}^\infty e^{-it\mu}\mathds 1_{[-\lambda,\lambda]}(\mu)\,d\mu = 
\int\limits_{-\lambda}^\lambda e^{-it\mu}\,d\mu = \frac{2\sin (\lambda t)}{t},\]  
while the inverse Fourier (cosine) transform of $\sin(\lambda \cdot)/\cdot$ at $\lambda$ equals $1/2$.  
To avoid this issue, we demand that $\overline{c}$ be chosen so that $\lambda \cbar$ is 
not in the joint spectrum $\Lambda$ for sufficiently large $\lambda$. 
It is possible to do this, since $\Lambda$ is discrete: given   
$\overline{c} \in \mathbb S^{n-1}$, there exists $R_0(\overline{c})>0$ and a corresponding 
$\{\lambda(l)\}_{l\in\mathbb N} \subset B(0, R_0)^{\complement}$ such that $\lambda(l) \overline{c} \notin \Lambda$. 
Such a choice of spectral parameters is also made in \cite[p. 53]{SoggeBook2017}  
Therefore, through this Fourier inversion, we can  rewrite $\Psi\Pi_{\lambda, \cbar}\Psi^*(x,y)$ as 
\[\Psi\Pi_{\lambda, \cbar}\Psi^*(x,y) = \frac{1}{\pi^n}\sum\limits_{j=0}^\infty \lp\int\limits_{\R^n} e^{i \lambda_j^{(1)} t_1/|c_1|}\dotsm 
e^{i\lambda_j^{(n)} t_n/|c_n|}\frac{\sin(\lambda t_1)}{t_1}\dotsm \frac{\sin(\lambda t_n)}{t_n}\,dt\rp \Psi\varphi_j(x)
\overline{\Psi\varphi_j(y)}.\]
Changing variables via $t_k\mapsto |c_k|t_k$, we obtain 
\[\Psi\Pi_{\lambda, \cbar}\Psi^*(x,y) = \frac{1}{\pi^n} \sum\limits_{j=0}^\infty\int\limits_{\R^n} e^{i\tbar\cdot\lambdabar_j}
\frac{\sin(\lambda t_1|c_1|)}{t_1}\dotsm \frac{\sin(\lambda t_n|c_n|)}{t_n}\Psi\varphi_j(x)\overline{\Psi\varphi_j(y)}\,dt.
\] 
Since we are working at the level of Schwartz kernels, we can  interchange the sum and the integral to obtain 
\[\Psi\Pi_{\lambda, \cbar}\Psi^*(x,y) = \left( \frac{1}{\pi^n}\int\limits_{\R^n}\frac{\sin(\lambda t_1|c_1|)}{t_1}\dotsm 
\frac{\sin(\lambda t_n|c_n|)}{t_n}\lp \Psi\circ e^{it_1 P_1}\circ\dotsm \circ e^{it_nP_n}\circ\Psi^*\rp \, d \overline{t} \right) (x,y).\]  
It is on this expression for (\ref{eqn:micro_joint_proj_spectral}) that we perform the Tauberian analysis  in 
Section \ref{sect:tauberian}.

Now let $\overline{\mu} \in \mathbb{R}^n$.  A similar application of spectral calculus allows  us to  rewrite the spectral measure kernel 
(in the sense of distributions) as
\begin{align*}
\sum\limits_{j=0}^\infty \delta(\overline{\mu}-\lambdabar_j)\Psi\varphi_j(x)\overline{\Psi\varphi_j(y)} & =\frac{1}{(2\pi)^n}
\int\limits_{\R^n}e^{i\tbar\cdot\overline{\mu}}\lp\Psi\circ e^{-it_1P_1}\circ\dotsm\circ e^{-it_nP_n}\circ \Psi^*\rp(x,y)\,d\tbar\\
& = \left( \frac{1}{(2\pi)^n}\int\limits_{\R^n} \Psi e^{i\tbar\cdot(\overline{\mu}-\overline P)}\Psi^*\,d \overline{t} \right)(x,y).
\end{align*}
Integrating  in $\overline{\mu}$ over $I(\lambda, \overline{c})$ recovers the spectral projector 
function in (\ref{eqn:micro_joint_proj_spectral}).

Since the formula for the microlocalized spectral measure only holds in the sense of Schwartz kernels, 
as the integral in $\overline{t}$ is not 
absolutely convergent, we regularize it by introducing a Schwartz function $\rho\in \mathscr S(\R)$, even and  
such that $\supp \wh\rho \subset [-\delta_0,\delta_0]$ 
with $\wh\rho(s) \equiv 1 $ for $|s| < \delta_0/2$.  (The value of $\delta_0$ will be specified in Section \ref{sect:final_steps}.)  
Then, by 
properties of the Fourier transform and the spectral calculus, 
we have that 
\begin{align*}
\Psi\rho(\mu_1-P_1)\dotsm\rho(\mu_n-P_n)\Psi^*(x,y)&= \left( \frac{1}{(2\pi)^n}\int\limits_{\R^n}\Psi 
e^{i\tbar\cdot(\overline{\mu}-\Pbar)}\Psi^*\wh\rho(t_1)\dotsm\wh\rho(t_n)\,d\overline{t} \right)(x,y)
\end{align*}
The goal of the next  three sections is to analyze the asymptotic behavior of this {\it microlocalized smoothed spectral measure} using 
microlocal normal forms.  We then use a Tauberian statement to relate this smoothed measure to (\ref{eqn:micro_joint_proj_spectral}), 
thus proving Theorem \ref{thm:main}.


\section{Microlocal Normal Forms for Joint Propagators} \label{sect:mnf}

One  of the key facts used in the analysis of the microlocalized joint spectral function is the fact that we can microlocally transform 
the operators $\{P_1,\dotsc,P_n\}$ into simply $\{D_{x_1},\dotsc,D_{x_n}\}$ acting on open sets in Euclidean space. This is a 
consequence of the following well-known result in symplectic geometry (that is, the homogeneous Darboux theorem), which states that 
commuting Hamiltonians can be locally ``straightened out" into a convenient normal form.  
Recall the notations for $\overline{P}$ and $\overline{P^0}$ set out in Section \ref{sect:list}.

\begin{prop}(\cite[Prop. 6.1.3]{DuistermaatHormander72}) \label{p:DH_nf}
Let $k \leq n$.  Let $p_1,\dotsc,p_k$ be real-valued $C^\infty$ functions, homogeneous of degree 1 on a conic neighborhood of 
$(x_0,\xi^0)\in T^*M\setminus 0$. Then, for the existence of a homogeneous canonical transformation 
\begin{equation} \label{eqn:DH_symplecto}
    \chi(x,\xi) = (X_1(x,\xi),\dotsc,X_n(x,\xi);\, \Xi_1(x,\xi),\dotsc,\Xi_n(x,\xi))
\end{equation}
from a conic neighborhood $U$ of $(x_0,\xi^0)$ to a conic neighborhood $V$ of $(0,\Xi^0)\in T^*\R^n\setminus 0$ such that 
\begin{equation}\label{eq:DH_cara}
p_j(x,\xi) = \Xi_j(x,\xi), \quad j=1,\dotsc,k,
\end{equation}
it is both necessary and sufficient to have that
\begin{itemize}
\item[(i)] the Poisson brackets satisfy $\{p_i,p_j\} = 0$  for $i,j = 1,2,\dotsc, k$ in a neighborhood of $(x_0,\xi^0)$; and
\smallskip 

\item[(ii)] $H_{p_1}(x_0,\xi^0),\dotsc, H_{p_k}(x_0,\xi^0)$ and the radial vector $Y(x_0;\xi^0):=\sum_1^n\xi_i^0\partial_{\xi_i}$
are linearly independent. 
\end{itemize}
\end{prop}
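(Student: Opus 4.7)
The plan is to treat necessity and sufficiency separately; necessity is immediate, while sufficiency is a homogeneous Darboux--Carath\'eodory construction. For necessity, a homogeneous canonical transformation $\chi$ preserves both the symplectic form (hence Poisson brackets) and the radial vector field $Y$. In the target model coordinates $(X,\Xi)$ the functions $\Xi_j$ satisfy $\{\Xi_i,\Xi_j\}=0$, and $H_{\Xi_j}=-\partial_{X_j}$ together with $Y_{\mathrm{new}}=\sum_i \Xi_i\partial_{\Xi_i}$ are manifestly linearly independent at $(0,\Xi^0)$. Pulling back by $\chi$ yields (i) and (ii) at $(x_0,\xi^0)$.

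For sufficiency, the strategy is to complete $\{p_1,\dots,p_k\}$ to a full set of canonical coordinates $(q_1,\dots,q_n;\,p_1,\dots,p_n)$ with each $p_j$ homogeneous of degree $1$, each $q_j$ homogeneous of degree $0$, and satisfying $\{p_i,p_j\}=\{q_i,q_j\}=0$, $\{p_i,q_j\}=\delta_{ij}$; the map $\chi:=(q;p)$ is then the required homogeneous canonical transformation. I would carry this out in two stages. Stage A: extend $p_1,\dots,p_k$ to a maximal involutive family $p_1,\dots,p_n$, each homogeneous of degree $1$, by iteratively choosing a new Hamiltonian $p_{m+1}$ that is a first integral of $H_{p_1},\dots,H_{p_m}$, homogeneous of degree $1$, and whose Hamiltonian vector field remains linearly independent from $H_{p_1},\dots,H_{p_m}$ and from the radial field $Y$; hypothesis (ii) is exactly what allows this three-way independence to be maintained at each step. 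Stage B: construct the conjugate variables. Since $[H_{p_i},H_{p_j}]=H_{\{p_i,p_j\}}=0$, the commuting fields $H_{p_1},\dots,H_{p_n}$ can be simultaneously rectified by Frobenius; prescribe the $q_j$ on an $n$-dimensional transversal to their joint flow, chosen to be conic and transverse to $Y$, with initial data homogeneous of degree $0$ and pairwise Poisson-commuting there (a sub-induction analogous to Stage A), then extend off the transversal by the ODE system $\{p_i,q_j\}=\delta_{ij}$. Invariance of $\{q_i,q_j\}$ under each $H_{p_\ell}$ follows from the Jacobi identity, so this bracket vanishes identically; similarly $Yq_j\equiv 0$ is preserved along every flow since $[Y,H_{p_\ell}]=0$ when $p_\ell$ is homogeneous of degree one.

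The main obstacle is Stage A: the extension must simultaneously respect involutivity, homogeneity of degree $1$, and transversality to the radial direction as encoded by (ii). Tracking these three independence conditions through the induction is the delicate bookkeeping at the heart of the proof. Stage B is then a standard rectification-plus-initial-data argument, made compatible with homogeneity via the commutator identity $[Y,H_{p_\ell}]=0$, which holds because each $p_\ell$ has degree one and its Hamiltonian flow thus commutes with the scaling flow generated by $Y$.
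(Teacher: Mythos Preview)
The paper does not prove this proposition; it is quoted verbatim from Duistermaat--H\"ormander and used as a black box (the text introduces it as ``the homogeneous Darboux theorem'') in the proof of Proposition~\ref{p:CdV_qbnf}. So there is no paper-side argument to compare against.

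Your outline is the standard homogeneous Darboux--Carath\'eodory construction and is correct in spirit. Two places deserve more care if you write it out. In Stage~A, the existence of a new $p_{m+1}$ with all three properties (Poisson-commuting with the previous $p_i$, homogeneous of degree~$1$, and with $H_{p_{m+1}}$ independent of $H_{p_1},\dots,H_{p_m},Y$) is not automatic from the original hypothesis~(ii), which only concerns the first $k$ functions at one point; one typically constructs $p_{m+1}$ by choosing data on a local transversal to the $(m{+}1)$-dimensional foliation generated by the commuting fields $H_{p_1},\dots,H_{p_m},Y$ and then extending by constancy along the $H_{p_i}$-flows and by degree-one homogeneity along the $Y$-flow. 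In Stage~B, the ``sub-induction'' you invoke to make the initial data of the $q_j$ Poisson-commute on the transversal is itself another instance of the same Darboux mechanism, so the argument is genuinely recursive rather than a clean two-stage procedure. Neither of these is a real gap, but they are exactly the bookkeeping that a full proof has to display. Your commutator identity $[Y,H_{p_\ell}]=0$ for $p_\ell$ homogeneous of degree one is correct and is indeed what makes homogeneity propagate through the construction.
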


The next proposition states the quantized version of the preceding result. This proposition appears in \cite{CdV79}, but we present a 
somewhat restructured proof here because it provides explicit constructions of the amplitude functions 
which appear in the statement of \thmref{thm:main}.

\begin{prop}(\cite[Thm. 2.1]{CdV79}) \label{p:CdV_qbnf}
Let $\overline{P}=\left(P_1, \dots, P_n \right)$ be a QCI system on $\Omega_0\subset T^*M\setminus 0$ and  
$(x_0;\xi^0) \in \Omega_0$. 
Then there exists a conic neighborhood $U$ of $(x_0,\xi^0)$ and a homogeneous canonical transformation 
$\chi: U \rightarrow T^* \mathbb{R}^n\setminus 0$ such that the conic neighborhood $V:=\chi(U)$ 
of $\chi(x_0;\xi^0)$ satisfies the following 
properties:  For any $Q_U\in \Psi^0(M), Q_V \in \Psi^0(\R^n)$ such that $\mbox{supp } q_U \subset U$ 
and $\mbox{supp } q_V \subset V$, we have that
\begin{enumerate}
\item There exists a closed conic subset $\mathcal{C}$ of the graph of $\chi$ and Fourier integral operators 
$A \in I^0(\mathbb{R}^n, M; \mathcal{C})$, $B \in I^0(M, \mathbb{R}^n; (\mathcal{C})^{-1})$ satisfying 
$(BAQ_U)- Q_U \in \Psi^{-\infty}(M)$ and $(ABQ_V) - Q_V \in \Psi^{-\infty}(\mathbb{R}^n)$.
\smallskip

\item Furthermore,  $(BP_i^0A Q_U) - P_iQ_U\in \Psi^{-\infty}(M)$
 and $P_i^0Q_V - (AP_iBQ_V) \in \Psi^{-\infty}(\mathbb{R}^n)$ for all $i = 1, \dots, n$.
\end{enumerate}
\end{prop}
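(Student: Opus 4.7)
The plan combines the homogeneous Darboux theorem (Proposition~\ref{p:DH_nf}) with FIO quantization and an inductive symbol-level correction scheme whose key compatibility condition is delivered by the commutativity $[P_i,P_j]=0$.

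First I would verify the hypotheses of Proposition~\ref{p:DH_nf} at $(x_0,\xi^0)$ with $k=n$. Poisson commutativity of the $p_i$ is immediate from $[P_i,P_j]=0$ via the symbol calculus. The nondegeneracy \eqref{eqn:QCIii} gives linear independence of $H_{p_1},\dots,H_{p_n}$, and the Euler relation $Y p_i = p_i$ together with the ellipticity of $\sum P_i^2$ forces $Y$ out of $\mathrm{span}\{H_{p_j}\}$: any relation $Y=\sum c_j H_{p_j}$ would give $p_k = Y p_k = \sum c_j\{p_j,p_k\}=0$ for every $k$, contradicting ellipticity. Proposition~\ref{p:DH_nf} then supplies a conic neighborhood $U$ of $(x_0,\xi^0)$, a conic neighborhood $V\subset T^*\R^n\setminus 0$, and a homogeneous canonical transformation $\chi:U\to V$ with $\xi_j\circ\chi = p_j$.

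Next I would quantize $\chi$. Standard Fourier integral operator theory produces an elliptic $A\in I^0(\R^n,M;\mathcal{C})$ associated to (the twisted graph of) $\chi$ together with a microlocal parametrix $B\in I^0(M,\R^n;\mathcal{C}^{-1})$. By the clean composition calculus, $BA$ and $AB$ are elliptic zeroth-order pseudodifferential operators equal to the identity to leading order on $U$ and $V$ respectively; inverting these elliptic PsiDOs modulo $\Psi^{-\infty}$ and absorbing the correction into $A$ or $B$ yields (1). Egorov's theorem then identifies $BP_i^0A$ as a classical first-order PsiDO on $M$ whose principal symbol on $U$ equals $\xi_i\circ\chi = p_i = \sigma(P_i)$, so
\[
BP_i^0A - P_i \in \Psi^0(M) \quad \text{microlocally on } U, \quad i=1,\dots,n.
\]

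The heart of the argument is an inductive reduction of these errors to smoothing, simultaneously for all $i$. Suppose at the $k$-th stage that $BP_i^0A - P_i = R_i^{(k)}\in\Psi^{-k}(M)$ microlocally on $U$. Expanding
\[
[BP_i^0A,\,BP_j^0A] = B[P_i^0,P_j^0]A \equiv 0 \pmod{\Psi^{-\infty}}
\]
using $[P_i,P_j]=0$ forces the Frobenius compatibility $\{p_i, r_j^{(k)}\} = \{p_j, r_i^{(k)}\}$ on the leading symbols. Pulling back to $V$ along $\chi$ and using $p_i\circ\chi^{-1} = \xi_i$ reduces this to $\partial_{x_i}\tilde r_j^{(k)} = \partial_{x_j}\tilde r_i^{(k)}$, so the Poincar\'e lemma (applied fiberwise in $x$, smoothly in $\xi$) delivers a homogeneous potential $\tilde r^{(k)}$ of order $-k$ with $\partial_{x_i}\tilde r^{(k)} = \tilde r_i^{(k)}$. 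Quantizing to $R^{(k)}\in\Psi^{-k}(\R^n)$ and conjugating $A\mapsto (I + iR^{(k)})A$, $B\mapsto B(I - iR^{(k)})$ cancels all $n$ error symbols at order $-k$ at once, since by Egorov the principal symbol of $B[P_i^0,R^{(k)}]A$ on $U$ equals $\partial_{x_i}\tilde r^{(k)}\circ\chi = r_i^{(k)}$ up to a universal constant absorbed in the choice of correction. Borel summing the resulting sequence of corrections produces FIOs for which (2) holds; the symmetric identity $P_i^0 Q_V - AP_iBQ_V\in\Psi^{-\infty}(\R^n)$ follows by applying $A$ on the left and $B$ on the right and using (1).

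The main obstacle is exactly this simultaneity: at each order $-k$ one must remove $n$ independent error operators with a single conjugation, which is possible only because the commutativity $[P_i,P_j]=0$ causes the symbol-level errors to assemble into a closed $1$-form in the normal-form coordinates on $V$. Without this integrability the induction would fail at the very first step; it is the Poincar\'e-lemma passage from closed to exact that couples the $n$ reductions into one and makes the joint quantum Birkhoff normal form exist.
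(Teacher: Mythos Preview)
Your strategy is the same as the paper's: verify the hypotheses of the homogeneous Darboux theorem (your Euler argument for the radial direction is a clean variant of the paper's), quantize $\chi$ by an elliptic FIO with parametrix, and then iteratively improve using the closedness relation $\partial_{x_i}\tilde r_j=\partial_{x_j}\tilde r_i$ forced by $[P_i,P_j]=0$ together with the Poincar\'e lemma. The paper packages the correction as a single intertwining $A_2\in\Psi^0(\R^n)$ built as $A_2^{(0)}\prod_{k\ge1}(1+C_{-k})$ and sets $A=A_2A_1$; your conjugation by $(I\pm iR^{(k)})$ is the equivalent formulation for $k\ge1$.

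There is, however, a genuine technical slip at the base step $k=0$. With $R^{(0)}\in\Psi^0$, the pair $(I+iR^{(0)})$ and $(I-iR^{(0)})$ are \emph{not} inverses modulo $\Psi^{-1}$: one has $(I-iR^{(0)})(I+iR^{(0)})=I+(R^{(0)})^2$, and correspondingly
\[
(I-iR^{(0)})P_i^0(I+iR^{(0)})=P_i^0+i[P_i^0,R^{(0)}]+R^{(0)}P_i^0R^{(0)},
\]
where the last term is of order $1$, not lower. Thus your conjugation creates a new order-$1$ error rather than reducing to order $-1$. (Taking $B'$ to be the true parametrix of $(I+iR^{(0)})A$ does not help either: then $E^{-1}[P_i^0,E]$ has principal symbol $(1+i\tilde r^{(0)})^{-1}\partial_{x_i}\tilde r^{(0)}$, and equating this to $-\tilde r_i^{(0)}$ is nonlinear, not Poincar\'e.) The paper handles this by treating the zeroth step \emph{multiplicatively}: it solves the linear system $\tfrac1i\partial_{x_j}a_2^{(0)}+q_j^{(0)}a_2^{(0)}=0$ via an integrating factor, obtaining the elliptic symbol $a_2^{(0)}=\exp(-i\int_0^{x_1}q_1^{(0)})$, with the cross-derivative compatibility $\partial_{x_k}q_j^{(0)}=\partial_{x_j}q_k^{(0)}$ again supplied by commutativity. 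Only for $k\ge1$ does it pass to additive corrections $(1+C_{-k})$. Your scheme becomes correct if you replace $(I+iR^{(0)})$ by an operator with principal symbol $\exp(i\tilde r^{(0)})$ at the first step, or equivalently start the additive induction at $k=1$ after this exponential gauge.
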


\begin{rmk}
The terminology of ``quantum Birkhoff normal form" for results similar to that of Proposition \ref{p:CdV_qbnf} is standard in the 
semiclassical analysis literature.  While this proposition certainly functions as the quantization of a Birkhoff normal form, 
we are workinging with  homogeneous microlocal analysis and so will  use the standard terminology
of ``microlocal normal form"  from that literature.
\end{rmk}

\begin{proof} To prove Prop. \ref{p:CdV_qbnf},
we follow the broad strokes of the proof of \cite[Prop. 6.1.4]{DuistermaatHormander72}.  
First, we seek to apply Prop. \ref{p:DH_nf} to the principal symbols $p_1,\dotsc,p_n$.  
Note that condition (i) is immediate from Def. \ref{defn:QCI} and the  pseudodifferential calculus.  
For condition (ii), note that the linear independence of the set $\{H_{p_1}, \dots, H_{p_n}\}$ follows from the non-vanishing of 
$d{p_1} \wedge \dots \wedge d{p_n}$ on $\Omega_0$.  Hence, it remains to show that the radial vector $Y$ is transversal 
to $\mbox{span} \{H_{p_1}, \dots, H_{p_n}\}$ at $(x_0;\xi^0)$. 
Setting $\gamma_j(t) = \exp(tH_{p_j})(x_0;\xi^0)$, for fixed $j$ we have
\begin{equation*}
\frac{d}{dt}\Big(\sum_{i=1}^n p_i^2\left(\gamma_j(t)\right)
\Big)\big|_{t=0} = 
\sum_i 2 p_i(x_0;\xi^0) d{p_i}(H_{p_j}) = \sum_i 2p_i(x_0;\xi^0) \omega(H_{p_i}, H_{p_j}) = 0
\end{equation*}
since the symbols of the QCI system $\overline{P}$ Poisson commute.  
Now let $c = \sum_i p_i^2(x_0;\xi^0)$, which is $>0$ by the ellipticity of $\sum_i P_i^2$.
Since $j$ was arbitrary, it follows that $\mbox{span} \{H_{p_1}, \dots, H_{p_n}\} \subset T_{(x_0;\xi_0)}( \{ \sum_i p_i^2 = c \} )$.  
Now, as $\sum_i p_i^2$ is homogeneous of degree 2, by Euler the radial direction $Y$ satisfies $Y\big( \sum_i p_i^2\big) 
= 2\sum_i p_i^2= 2c > 0$,  
showing that condition (ii) is also satisfied, and the assumptions of Prop. \ref{p:DH_nf} hold for $k=n$.
\smallskip

By Prop. \ref{p:DH_nf}, there exists an open conic neighborhood  $\tilde{U}$ of $(x_0;\xi^0)$, and  a canonical transformation 
$\chi: \tilde{U} \to \tilde{V} \subset T^*\R^n\setminus 0$ such that \eqref{eq:DH_cara} holds. 
Let $\mathcal{C}\subseteq Gr(\chi)$ be any open conic subset of the graph of $\chi$, and choose any 
$A_1\in I^0(\mathbb{R}^n, M;Gr(\chi))$ 
which is supported on $\mathcal{C}$, elliptic on $\tilde{U}$ and whose principal symbol $a_1 \equiv 1$ 
on an open conic set $U\subset\tilde{U}$ containing  $(x_0;\xi^0)$.
By slight abuse of notation, we will write $A_1\in I^0(\mathbb{R}^n, M;\mathcal{C})$ and similarly for related operators.
Since $A_1$ is elliptic on $U$, it has a microlocal parametrix $B_1\in I^0(M, \mathbb{R}^n;(\mathcal{C})^{-1})$, 
so that, for any microlocal cutoff $Q_{U}\in\Psi^0(M)$  supported in  $U$,  $(A_1B_1Q_{U})- Q_{U} \in\Psi^{-\infty}(M)$.
\smallskip

{At this point, for notational consistency with \cite{DuistermaatHormander72}, for the remainder of the proof we will use
use $(x,\xi)$ to denote the standard canonical coordinates on $T^*\R^n$.}
For each $j$, Egorov's Theorem and the transverse intersection calculus  give us 
\begin{equation*}
    P_j B_1 - B_1(D_{x_j} + \sum_{l=0}^{\infty} Op(q_j^{(-l)})) \in I^{-\infty}(M , \mathbb{R}^n; \mathcal{C})
\end{equation*} 
for some  symbol expansion $\sum_l q_j^{(-l)} \in S^0$.  
Our goal is to find a single elliptic pseudodifferential operator $A_2\in\Psi(\R^n)$, independent of $j$, 
elliptic on $\chi(U)$ and such that 
\begin{equation} \label{e:pseudor_equivalence}
(D_{x_j} + \sum_{l=0}^{\infty} Op(q_j^{(-l)}))A_2 - A_2 (D_{x_j}) \in \Psi^{-\infty}(\R^n)
\end{equation}
for all $j=1,\dots,n$. 
One can rewrite the relation (\ref{e:pseudor_equivalence}) as 
\begin{equation*}
[D_{x_j}, A_2] + \sum_{l=0}^{\infty} Op(q_j^{(-l)})A_2 \in \Psi^{-\infty}(\R^n), \, \,  \,  1 \leq j \leq n,
\end{equation*}
with this formulation allowing us to apply the procedure of  \cite[Prop. 6.1.4]{DuistermaatHormander72} whilst 
keeping track of a useful piece of information that we describe now.    
We will then set $A:=A_2A_1$.

 The operator $A_2\in\Psi^0(\R^n)$ will be constructed  in the form
\begin{equation} \label{e:A_2}
A_2 := A_2^{(0)} \prod_{k =1}^\infty (1 + C_{-k}),
\end{equation} 
where $C_{-l} \in \Psi^{-l}(\R^n)$ for each $l$. The right side is interpreted as an asymptotic limit in $\Psi^0(\R^n)$,
as $N\to\infty$,
of $A_2^{(N)} := A_2^{(0)} \prod_{k =1}^N (1 + C_{-k})$, 
constructed such that for each $1\le j\le n$,
\begin{equation*}
[D_{x_j},A_2^{(N)}] + \sum_{l=0}^{\infty} Op(q_j^{(-l)}))A_2^{(N)}
\in \Psi^{-(N+1)}(\R^n).
\end{equation*}
We start by finding the leading term, a microlocally elliptic $A_2^{(0)} \in \Psi^0(\R^n)$ such that 
\begin{equation*}
[D_{x_j}, A_2^{(0)}] + \sum_{l=0}^{\infty} Op(q_j^{(-l)})A_2^{(0)} \in \Psi^{-1},  \, \,  \,  1 \leq j \leq n.
\end{equation*}
Let $a_2^{(0)}$  be its principal symbol.  We will solve the system 
\begin{equation*}
\begin{cases}
\frac{1}{i} \partial_{x_1} a_2^{(0)} + q_{1}^{(0)} a_2^{(0)} &= 0 \\
\dots \\
\frac{1}{i} \partial_{x_n} a_2^{(0)} + q_{n}^{(0)} a_2^{(0)} &= 0.
\end{cases} 
\end{equation*}
After a possible translation to the origin in the $x$ coordinates,
it can be assumed that we are working near the basepoint $x=0\in \chi(U)$,
and we augment the above system 
with initial conditions $(a_2^{(0)})_{\upharpoonright x=0} = 1$.  
A calculation below that in more general form will lead to the relation (\ref{e:quantum_comm_relations})
 implies that 
\begin{equation} \label{e:derivative_relations}
\partial_{x_k} q_{j}^{(0)} = \partial_{x_j} q_{k}^{(0)},  \, \,  \,  1 \leq j,k \leq n,
\end{equation}
so that any one  of the $q_{j}^{(0)}$  determines all the other symbols $q_{k}^{(0)}$. 
It follows that, on a conic open subset $V \subset \chi(U)$, there is a solution of the form
\begin{equation*}
a_2^{(0)}(x;\xi) = \exp(-i \int_0^{x_1} q_1^{(0)}(t_1,x';\xi) \, dt_1).
\end{equation*}
One can verify that $a_2^{(0)} \in S^0$, equals  $1$ at $x=0$, and thus (possibly shrinking $V$) is elliptic on $V$. 
Let $A_2^{(0)} = \Op(a_2^{(0)})$ and  $B_2^{(0)}$ be a microlocal  parametrix  of $A_2^{(0)}$  on $V$.  
By $\Psi$DO calculus, since $ord\left(q_j^{(-l)}\right)=-l\le -1$ for $l\ge 1$, we have 
\[[D_{x_j}, A_2^{(0)}] + \sum_{l=0}^{\infty} Op(q_j^{(-l)})A_2^{(0)} =: \tilde{R}_j^{(-1)} \in \Psi^{-1}(\R^n),\,  \,  1 \leq j \leq n.\] 
Defining $R_j^{(-1)} = B_2^{(0)} \tilde{R}_j^{(-1)}$, 
the principal symbol $r_1^{(-1)}$ of (say) $R_1^{(-1)}$ depends only on $q_1^{(0)}$.  
Furthermore, by the calculations below
(leading from the commutation relations (\ref{e:quantum_comm_relations}) 
leading to the derivative relations \eqref{e: new 4.5}), 
the same relation amongst the spatial derivatives of 
the $r_{j}^{(-1)}$ hold as for the $r_{j}^{(0)}$ as in (\ref{e:derivative_relations}), 
so that $r_j^{(-1)}$ also depends only on $r_1^{(-1)}$ for any $j=1,\dots, n$.  

The derivation of $A_2^{(0}$ is the initial step in the recursive construction of the $A_2^{(N)}$, which we 
base on an argument in  \cite{CdV79}.  Note that $B_1[P_j, P_k] A_1 Q_U \equiv 0$.  
Suppose that for $N\ge 0$, we have an 
$A_2^{(N)}\in\Psi^0(\R^n)$ that is elliptic on $V$ and with microlocal parametrix $B_2^{(N)}$ satisfying the property
that,  for all $1\le j\le n$,
\begin{equation*}
    A_2^{(N)} \, A_1 \, P_j \, B_1 \, B_2^{(N)} \equiv D_{x_j} + R_j^{-(N+1)} (\mbox{mod} \, \Psi^{-\infty}),  \, \,  \,  1 \leq j \leq n,
\end{equation*}
for a $\Psi$DO  $R_j^{-(N+1)}$ of order $-(N+1)$ with principal symbol  satisfying $r_j^{-(N+1)}$.
Then we can calculate as follows: Recalling that $A_1B_1\equiv I \mod \Psi^{-\infty}(\R^n)$, we have
$$P_j B_1 A_1 P_k \equiv P_j P_k \mod \Psi^{-\infty},$$
and hence
\begin{align} \label{e:quantum_comm_relations}
\nonumber  A_2^{(N)} \, A_1[P_j,P_k]B_1 \, B_2^{(N)} & \equiv \left[ D_{x_j} + R_j^{-(N+1)}, D_{x_k} + R_k^{-(N+1)} \right]  \\
& \equiv 0 \, (\mbox{mod} \, \Psi^{-\infty}),  \, \,  \,  1 \leq j,k \leq n.
\end{align}
Taking principal symbols and grouping together the like terms,
\eqref{e:quantum_comm_relations}
implies  relations among the corresponding principal symbols, namely
\begin{equation}\label{e: new 4.5}
   \partial_{x_k} r_j^{-(N+1)} = \partial_{x_j} r_k^{-(N+1)},  \, \,  \,  1 \leq j,k \leq n.
\end{equation}

Having established \eqref{e:quantum_comm_relations} and \eqref{e: new 4.5}, the case of $N=0$ finishes the
construction of $A_2^{(0)}$. We now turn to the iterative construction of the $A_2^{(N)},\, N\ge 1$.
 
 The relations \eqref{e: new 4.5} just derived will allow us to solve a system of non-homogeneous transport equations,
applying the remaining steps of Duistermaat-H\"ormander's argument but with a slight variation, 
whose purpose is to continue to  obtain  
simple systems of linear ODEs which are explicitly  solvable by integrating factors.
This will lend itself  to the existence of single pseudodifferential operator 
$A_2$ such that \eqref{e:pseudor_equivalence} holds for all $P_j,\, 1\le j\le n$.  
Recall that the Ansatz is to find, via induction, 
$A_2^{(N)} = A_2^{(0)}(1 + C_{-1}) \dots (1 + C_{-N})$, where $C_{-l} \in \Psi^{-l}$ for each $l$, such that for each $1\le j\le n$,
\begin{equation*}
(D_{x_j} + \sum_{l=0}^{\infty} Op(q_j^{(-l)}))A_2^{(N)} - A_2^{(N)} D_{x_j} \in \Psi^{-(N+1)}(\R^n).
\end{equation*}
After findng $A_2^{(N-1)}$ for $N \geq 1$, the problem reduces to finding a single $C_{-N}$ such that
\begin{equation*}
( D_{x_j} + R_j^{(-N)}) (I + C_{-N})  - (I + C_{-N})D_{x_j} \in \Psi^{-(N+1)},\quad 1\le j\le n,
\end{equation*}
which leads to a system for the  unknown principal symbol of $C_{-N}$:
\begin{equation*}
\begin{cases}
\frac{1}{i} \partial_{x_1} c_{-N} &= - r_{1}^{(-N)} \\ 
\dots \\
\frac{1}{i} \partial_{x_n}c_{-N}  &= - r_{n}^{(-N)}
\end{cases}
\end{equation*}
As described above,
 by (\ref{e:quantum_comm_relations}), the symbol $r_{1}^{(-N)}$ determines all the other $r_{j}^{(-N)}$ in a 
fashion similar to that of $q_{1}^{(0)}$.  Thus, we solve the first equation on the same conic open subset $V$, which in turn solves the 
remaining equations. 
It is important to keep in mind that we are not invoking any abstract existence theorem that might force $V$ to shrink 
after each iteration. Setting 
\begin{equation} \label{e:A_2}
A_2 := A_2^{(0)} \prod_{k > 0} (1 + C_{-k}),
\end{equation} 
we completed the proof of (\ref{e:pseudor_equivalence}). Now set $A :=A_2 A_1$ to obtain our desired operator.  
To complete the proof of  Prop. \ref{p:CdV_qbnf}, we set $U=\chi^{-1}(V)$
and take   $\mathcal{C}$ to be a closed subset of $Gr(\chi)$ over $V$. 
\end{proof}

\begin{rmk}\label{rm: b}
    In Prop. \ref{prop:smooth_spec_measure_microlocal}, we will use a symbol, $b$, defined in terms of the symbol $a_2^{(0)}$ 
    which was just derived.
    We note that, being parametrices of each other, $A$ and $B$ have principal symbols which multiply to equal 1 over the diagonal.
    Thus,  $\sigma(\Psi BA\Psi^*) = \left|\sigma(\Psi)\right|^2$ for any $\Psi\in\Psi^0(M)$ supported on $\Omega$.
\end{rmk}

Recall $\Omega_0 \subset T^*X$ from Def. \ref{defn:QCI}. Given $(x_0,\xi^0) \in \Omega_0$, consider the conic neighborhood $U$ 
provided by Prop. \ref{p:CdV_qbnf}.  We choose $\epsilon_0 = \epsilon_0(U)$ sufficiently small such that the following
two properties hold:

(i) there exists a  chain  of conic neighborhoods of $(x_0;\xi^0)$
(with the containments being compact inclusions when projected onto the cosphere bundle $S^*M$),
\begin{equation} \label{e:nested_flow_nbhds}
W_{\epsilon_0/(n+1)}  \subset W_{2\epsilon_0/(n+1)} \subset \dots \subset W_{\epsilon_0} \subset U,
\end{equation}
with the property that, for any $i_k \in \{1,\dots, n\}$ and $1 \leq k \leq n$,
\begin{align} \label{e:time_intervals}
& \forall (x;\xi) \in W_{\epsilon_0/(n+1)}\hbox{ and  }\overline{t}=(t_1, \dots, t_n) \in \overline{J}(\epsilon_0) 
: = (-\epsilon_0/(n+1), \epsilon_0/(n+1))^n, \\
\nonumber & \exp(t_1 H_{p_{i_1}}) \circ \exp(t_2 H_{p_{i_2}}) \circ \dotsm \circ \exp(t_k H_{p_{i_k}}) (x_0;\xi^0) 
\in W_{k\epsilon_0/(n+1)},
\end{align}  
(In other words, for the ``times" $\overline{t}\in \overline{J}(\epsilon_0)$, 
$W_{\epsilon_0/(n+1)}$ stays properly contained in $U$ after jointly flowing out in $k\leq n$ directions;
an $\epsilon_0$ satisfying this  exists due to the normal form in Prop. \ref{p:DH_nf} and the explicit forms of the $P_j^0$.); 
\smallskip

\noindent and 
\smallskip

(ii) the condition  \eqref{e:ii}, described  at the start of Sec. \ref{sect:microlocal_asymps} below.  
\medskip

Towards the end of the proof of the next lemma we will also need to consider the nested sequence of the images, 
$\left\{ \chi \left( W_{j\epsilon_0/(n+1)} \right)  \right\}_{j=1}^{n+1}$, 
each contained in $V\subset T^*\R^n\setminus 0$,
where $\chi$ is the canonical transformation from Prop. \ref{p:CdV_qbnf}.
\medskip

The following is a slight reformulation of a lemma from \cite{CdV79}, geared towards the computations 
below for the Schwartz kernels of the joint propagators. 

\begin{lem}(\cite{CdV79} Lemma 2.3) \label{l:prop_diff}
For $A, B$ as in Prop. \ref{p:CdV_qbnf} and any 
$Q_{W_{\epsilon_0/(n+1)}}\in\Psi^0(M)$, i.e., smoothing outside of ${W_{\epsilon_0/(n+1)}}$, 
\begin{equation} \label{e:propagator_kernels_equality}
K_{e^{i \overline{t} \cdot \overline{P}}Q_{W_{\epsilon_0/(n+1)}}}(x,y) 
- K_{B e^{i \overline{t} \cdot \overline{P^0}} A Q_{W_{\epsilon_0/(n+1)}}} (x,y)
\in C^{\infty}\left(\,\overline{J}(\epsilon_0) \times M \times M\, \right)
\end{equation}
\end{lem}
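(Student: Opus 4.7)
The plan is to show that $U(\overline{t}):=e^{i\overline{t}\cdot\overline{P}}$ and $V(\overline{t}):=Be^{i\overline{t}\cdot\overline{P^0}}A$, after composition on the right by $Q:=Q_{W_{\epsilon_0/(n+1)}}$, satisfy the \emph{same} system of first-order joint evolution equations in $(t_1,\dots,t_n)$ modulo a smoothing remainder depending smoothly on $\overline{t}\in\overline{J}(\epsilon_0)$, and agree at $\overline{t}=0$ modulo smoothing. A Duhamel-type argument in the time variables then yields \eqref{e:propagator_kernels_equality}.

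\textbf{Matching evolution equations.} From $[P_i,P_j]=0=[P_i^0,P_j^0]$ one has $\partial_{t_j}U(\overline{t})=iP_jU(\overline{t})$ exactly, while
\begin{equation*}
\partial_{t_j}V(\overline{t})=iBP_j^0 e^{i\overline{t}\cdot\overline{P^0}}A.
\end{equation*}
Choose a cutoff $Q_V\in\Psi^0(\R^n)$ with $\mbox{supp}\,q_V\subset V=\chi(U)$ and $q_V\equiv 1$ on $\chi(W_{n\epsilon_0/(n+1)})$. Since $P_j^0=D_{x_j}$ generates the translation $(x;\xi)\mapsto(x+t_je_j;\xi)$ and $\chi$ intertwines the joint Hamilton flows by Prop. \ref{p:CdV_qbnf}, property \eqref{e:time_intervals} forces the wavefront set of $e^{i\overline{t}\cdot\overline{P^0}}AQ$ (viewed as an operator) to stay inside $\chi(W_{n\epsilon_0/(n+1)})$ for all $\overline{t}\in\overline{J}(\epsilon_0)$, so $(I-Q_V)\,e^{i\overline{t}\cdot\overline{P^0}}AQ$ is smoothing. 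Inserting $Q_V$, then using $AB\equiv I$ microlocally on $\mbox{supp}\,q_V$ (Prop. \ref{p:CdV_qbnf}(1)), and finally $(BP_j^0 A)Q\equiv P_j Q$ modulo $\Psi^{-\infty}(M)$ (Prop. \ref{p:CdV_qbnf}(2)), yields
\begin{equation*}
\partial_{t_j}\bigl(V(\overline{t})Q\bigr)=iP_j\bigl(V(\overline{t})Q\bigr)+R_j(\overline{t}),
\end{equation*}
where each $R_j(\overline{t})$ is smoothing with kernel smooth in $\overline{t}\in\overline{J}(\epsilon_0)$.

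\textbf{Matching initial data and Duhamel.} At $\overline{t}=0$, $V(0)Q-U(0)Q=(BA-I)Q\in\Psi^{-\infty}(M)$, again by Prop. \ref{p:CdV_qbnf}(1). Set $D(\overline{t}):=V(\overline{t})Q-U(\overline{t})Q$; then $\partial_{t_j}D(\overline{t})=iP_j D(\overline{t})+R_j(\overline{t})$ for each $j$, with $D(0)\in\Psi^{-\infty}(M)$. Integrating successively in $t_1,\dots,t_n$, using that each $e^{it_j P_j}$ is bounded on every $H^s(M)$ and that the $P_j$ commute, expresses $D(\overline{t})$ as a sum of terms of the form $e^{i\overline{t}'\cdot\overline{P}}$ applied to smoothing operators whose kernels are smooth in $\overline{t}$. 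Hence $D(\overline{t})$ has Schwartz kernel in $C^\infty(\overline{J}(\epsilon_0)\times M\times M)$, which is exactly \eqref{e:propagator_kernels_equality}.

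\textbf{Main obstacle.} The delicate step is the wavefront-set bookkeeping that certifies each $R_j(\overline{t})$ as smoothing. The identities $AB\equiv I$ and $BP_j^0 A\equiv P_j$ hold only microlocally on $V$ and $U$, so one must guarantee that the wavefront set of $e^{i\overline{t}\cdot\overline{P^0}}AQ$, together with that of any intermediate partial flow $e^{it_k P_k^0}\cdots e^{it_1P_1^0}AQ$ arising if one unpacks the joint propagator as an iterated composition, stays strictly inside the elliptic set of the relevant cutoffs for every $\overline{t}\in\overline{J}(\epsilon_0)$. The nesting \eqref{e:nested_flow_nbhds} together with the time-interval estimate \eqref{e:time_intervals} is engineered precisely so that these containments hold uniformly.
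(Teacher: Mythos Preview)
Your proposal is correct and follows essentially the same strategy as the paper's proof: both use the microlocal intertwining identities of Prop.~\ref{p:CdV_qbnf} together with the wavefront-set control afforded by the nesting \eqref{e:nested_flow_nbhds}--\eqref{e:time_intervals} to feed a Duhamel-type argument. The only difference is packaging: the paper introduces an interpolating family $V_k(s_k,t_k)=e^{is_kP_k}Q_kBe^{i(t_k-s_k)P_k^0}AQ_0$, differentiates in the auxiliary parameter $s_k$, and iterates over $k$ with a chain of cutoffs $Q_0,\dots,Q_n$, whereas you write the evolution equation for the difference $D(\overline{t})$ directly and integrate successively in $t_1,\dots,t_n$ using a single cutoff $Q_V$.
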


\begin{proof} 
The main idea is to read off properties of the propagators $e^{it_k P_k},\, 1\le k\le n,$ 
from the microlocal properties of  the model  $e^{itP_k^0}$ (see Lemma \ref{l:Hormander_half_wave} below) 
and those of the canonical transformations $\chi,\chi^{-1}$ and their associated FIOs 
$A,B$ (from Prop. \ref{p:CdV_qbnf} above).  
Starting with the case $k=1$,  denote the given $Q_{W_{\epsilon_0/(n+1)}}$ by $Q_0$,
and consider a second operator, $Q_{1}\in\Psi^0(M)$,  chosen to be supported in $W_{2 \epsilon_0/(n+1)}$. 
Note that both $Q_0$ and $Q_1$ are supported in the set $U$ as in Prop. \ref{p:CdV_qbnf}. 
By the discussion after  equation (\ref{e:nested_flow_nbhds}), one can choose $Q_1$ so that $Q_{1}\equiv I$ on $\supp Q_0$
and thus $Q_{1} Q_0  \equiv Q_0$. 

Next, for
 $t_1\in J(\epsilon_0):=(-\epsilon_0/(n+1), \epsilon_0/(n+1))$ and $s_1 \in [0, t_1]$ 
 (so that $t_1-s_1\in J(\epsilon_0)\cap\R_+$), 
define an operator acting on $L^2(M)$ by
\begin{align*}
& V_1(s_1,t_1) :=   e^{i s_1 P_1} \, Q_1 \, B \, e^{i(t_1 - s_1) \cdot P^0_1} \, A Q_0.
\end{align*}
As described in \eqref{e: deriv of eitP} below,
for $f \in L^2(M)$ we can  differentiate  $V_1(s_1,t_1)(f)$ in $s_1$ to obtain
\begin{align} \label{eqn:V_1_diffd}
 \frac{1}{i} \partial_{s_1}V_1(s_1,t_1)(f)  &=&  \\
\nonumber & \big(  e^{is_1 P_1} \, P_1 \,  Q_1 \, B \,  e^{i(t_1 - s_1) P_1^0} \, A Q_0  
& \!\!\!\!\!\!-  &\,\,\, e^{i s_1 P_1} \, Q_1 \, B \, P_1^0 \, e^{i(t_1 - s_1) P_1^0} \, A Q_0 \big)(f).
\end{align}
The first term on the RHS of \eqref{eqn:V_1_diffd}  is
$$\equiv \left(e^{is_1P_1}\right)\left(P_1\right)\left(Be^{i(t_1-s_1)P_1^0}AQ_0\right) f \mod C^\infty,$$
since $Q_1\equiv I$ on $\supp \left(Be^{i(t_1-s_1)P_1^0}AQ_0\right)$. 
On the other hand, the negative of the second term on the RHS of \eqref{eqn:V_1_diffd}  is
\beas
&\equiv& \left(e^{is_1P_1}\right) \left(BP_1^0e^{i(t_1-s_1)P_1^0}AQ_0\right) f 
	\equiv \left(e^{is_1P_1}\right)  \left(BP_1^0ABe^{i(t_1-s_1)P_1^0}AQ_0\right) f \\
&\equiv& \left(e^{is_1P_1}\right) \left(P_1\right)\left(Be^{i(t_1-s_1)P_1^0}AQ_0\right) f \mod C^\infty.
\eeas
Here,  the first equivalence is again due to $Q_1\equiv I$ on $\supp \left(Be^{i(t_1-s_1)P_1^0}AQ_0\right)$;
the second is because $AB\equiv I$ on functions with microlocal support in $V=\chi(U)$;
and the third follows from $BP_1^0A\equiv P_1$ on the same class.
Thus, the RHS of \eqref{eqn:V_1_diffd}  is
$$\equiv \left(e^{is_1P_1}\right)\left(P_1-P_1\right)\left(Be^{i(t_1-s_1)P_1^0}AQ_0\right) f \equiv 0\mod C^\infty,$$
and so the expression in \eqref{eqn:V_1_diffd} is smooth.
Integrating the LHS of \eqref{eqn:V_1_diffd} in $s_1$ from $0$ to $t_1$, one obtains
\begin{equation} \label{e:final_diff_prop}
C^{\infty}(M)\owns V_1(t_1,t_1)(f) - V_1(0,t_1)(f) \equiv i \left( e^{i t_1 P_1} Q_0 
- B e^{i t_1 P_1^0} A Q_0 \right)(f)
\end{equation}
for each  $t_1\in J(\epsilon_0)$.  
A similar argument applies if we first differentiate \eqref{eqn:V_1_diffd} with respect to $t_1$ an arbitrary number of times,
gain using \eqref{e: deriv of eitP}.
Hence, 
$$K_{e^{it_1P_1}Q_0}-K_{Be^{it_1P_1^0}AQ_0}\in C^\infty(J(\epsilon_0)\times M \times M).$$

We now repeat the above arguments iteratively, choosing  for each $k \in \{1, \dots, n-1\}$ an operator $Q_{k+1}$ microlocally supported 
on $W_{(k+2)\epsilon_0/(n+1)}$ with $Q_{k+1}\equiv I$ on $W_{(k+1)\epsilon_0/(n+1)}$ and thus
 $Q_{k+1}Q_{k}\equiv Q_{k}$.
 Let $\overline{t}':=(t_1,\dots,t_k)\in J(\epsilon_0)^{k}$ and similarly for $\overline{P}'$ and $\overline{P^0}'$.
  The induction hypothesis at step $k$ is 
  \begin{equation}\label{eqn:Vk}
  e^{i\overline{t}'\cdot \overline{P}'}Q_0-B e^{i\overline{t}'\cdot \overline{P^0}'}AQ_0:L^2(M)\to C^\infty(M).
   \end{equation}
 To prove that the induction step holds,  we form the analogue of $V_1$, namely
$$
 V_{k+1}\left(s_{k+1},t_{k+1}\right):=e^{is_{k+1}P_{k+1}}Q_{k+1}B e^{i(t_{k+1}-s_{k+1})P^0_{k+1}}AQ_0
$$
 and argue as above. The terminal statement, \eqref{eqn:Vk} for $k=n$, yields \eqref{e:propagator_kernels_equality}.
 \end{proof}


\section{Oscillatory Integrals and Propagators}

We now 
record some basic results from the theory of Fourier integral operators. 
We start with an adaptation of \cite[Prop. 25.3.3]{HormanderBook1985b} and the immediate remarks following it in a single 
proposition.  For the specific definitions of terms, the reader is directed to \cite{HormanderBook1985, HormanderBook1985b}.

\begin{prop} \label{p:generating_function}
Let $\mathcal{C}$ be a local homogeneous canonical relation, given by the graph of the canonical transformation $\chi$ described by 
(\ref{eqn:DH_symplecto}), from a conic neighborhood of $(x_0, \xi^0) \in T^*M \backslash 0$ to a conic neighborhood of 
$(y_0, \eta^0) \in T^*\mathbb{R}^n \backslash 0$.  Consider  $K_{B} \in I^0(M \times \mathbb{R}^n, (\mathcal{C}')^{-1})$, 
the Schwartz kernel of an operator $B$.  Assume the family of symbols $(\ref{eq:DH_cara})$ satisfies fiber rank condition 
on some open set $\tO \subset U \cap \Omega_0 \neq \emptyset.$

Then for any local coordinates, $y = X(x;\xi)\in\omega'\subset\R^n$ on a neighborhood of $y_0\in\mathbb{R}^n$
and  $x$ on a neighborhood  $\omega\subset M$ of $x_0$, 
on which (\ref{eqn:DH_symplecto}) is given, there exists a conic neighborhood 
$\Omega' \subset \chi(\tO)$ of 
$(y_0, \eta_0)$ such that for all $Q_{\Omega'}\in \Psi^0(\R^n)$  
supported on $\Omega' $ and for all $y \in \omega', x \in \omega$,
\begin{equation} \label{e:A_oi}
K_{B Q}(x,y) - (2 \pi)^{-n} \int_{\mathbb{R}^n} e^{i ( S(x;\eta) - y \cdot \eta)} b(x,y; \eta) \, d \eta \in C^{\infty}(M \times \mathbb{R}^n )
\end{equation}
with $b \in S^0( M \times \mathbb{R}^n \times \mathbb{R}^n)$.  Furthermore, $S$ is the generating function for $\chi^{-1}$.
\end{prop}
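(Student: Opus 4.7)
The plan is to apply the standard theory of Fourier integral operators (see \cite[Ch. 25]{HormanderBook1985b}), using the fiber rank condition as precisely the nondegeneracy needed to take $\eta$ as fiber coordinates in the oscillatory integral representation of $K_B$. Once that is set up, the equivalence-of-phase-functions theorem immediately delivers an amplitude $b\in S^0$ for which \eqref{e:A_oi} holds modulo smoothing.

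First I would identify what generating function $S(x;\eta)$ is being asked for. The canonical relation $(\mathcal{C}')^{-1}$ is (the twist of) the graph of $\chi^{-1}$; writing $\chi=(X,\Xi)$ as in \eqref{eqn:DH_symplecto} and using \eqref{eq:DH_cara} so that $\Xi_j=p_j$, the condition that $\phi(x,y,\eta):=S(x;\eta)-y\cdot\eta$ parametrize this Lagrangian amounts to requiring $\xi=\partial_x S(x,\eta)$ and $y=\partial_\eta S(x,\eta)$ on the set $\{\eta=p(x,\xi)\}$. Construction of such an $S$ reduces, by the implicit function theorem, to solving $p(x,\xi)=\eta$ for $\xi$ as a smooth function of $(x,\eta)$, which is possible exactly when $\det(\partial_\xi p_j)\neq 0$; this is precisely the fiber rank $n$ condition of Def. \ref{def:fiber_rank}, assumed on $\tO$. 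After shrinking to a conic neighborhood where this Jacobian is uniformly nondegenerate, one recovers $\xi=\xi(x,\eta)$ and then defines $S$ by integrating the $1$-form $\xi(x,\eta)\,dx$ along a fixed path; closedness of this form is automatic because $\chi$ is a canonical transformation, and homogeneity of degree one in $\eta$ is inherited from the homogeneity of the $p_j$.

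With $S$ in hand, a short verification shows that $\phi(x,y,\eta)=S(x;\eta)-y\cdot\eta$ is a nondegenerate phase function near the reference point: the critical set $\{d_\eta\phi=0\}$ gives $y=\partial_\eta S$, and the fiber Hessian is $\partial_\eta^2 S$, whose invertibility is once again equivalent to the fiber rank condition (via differentiating $\eta=p(x,\xi(x,\eta))$). The Lagrangian parametrized by $\phi$ thus coincides with $(\mathcal{C}')^{-1}$ over a conic neighborhood $\Omega'\subset\chi(\tO)$ of $(y_0,\eta_0)$. Since $BQ_{\Omega'}\in I^0(M\times\R^n,(\mathcal{C}')^{-1})$, Hörmander's equivalence-of-phase-functions theorem then produces a symbol $b\in S^0(M\times\R^n\times\R^n)$ for which the oscillatory integral in \eqref{e:A_oi} represents $K_{BQ_{\Omega'}}$ modulo $C^\infty(M\times\R^n)$. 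The cutoff $Q_{\Omega'}$ plays its usual role of confining the wavefront set to the region where $S$ and $b$ are defined.

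I expect the main obstacle to be ensuring that the construction of $S$ extends over a genuine conic open set $\Omega'$ rather than merely an infinitesimal neighborhood of one point; this requires that the implicit-function-theorem solution $\xi(x,\eta)$ extend uniformly and that the integral defining $S$ be well-defined globally on the conic neighborhood (for instance by integrating radially from a fixed ray and exploiting the homogeneity of $\chi$). Apart from this, the bookkeeping of the asymptotic expansion of $b$ in terms of the principal symbol of $B$ is routine. No fundamentally new idea is required beyond careful use of the fiber rank assumption together with the FIO machinery already invoked in the proof of Prop. \ref{p:CdV_qbnf}.
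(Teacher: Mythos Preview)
Your proposal is correct and follows essentially the same route as the paper. The paper's proof is terser: it notes that the fiber rank condition makes $D\eta/D\xi$ nonsingular on $\tO$, invokes the Inverse Function Theorem to pass to coordinates $(x,\eta)$ on $(\mathcal{C}')^{-1}$, and then cites \cite[Thm.~21.2.18, Thm.~25.3.7]{HormanderBook1985b} to obtain the homogeneous generating function $S$ and the oscillatory integral representation, with $\Omega'$ arising from the support of $b$. Your version unpacks these citations into the explicit construction of $S$ via the implicit function theorem and integration of the closed form, which is exactly what underlies Thm.~21.2.18. One small correction: the nondegeneracy of $\phi(x,y,\eta)=S(x,\eta)-y\cdot\eta$ as a phase function is automatic from the linear $y$-dependence (the differentials $d(\partial_{\eta_j}\phi)$ contain $-dy_j$), so the fiber rank condition is needed for the \emph{existence} of $S$, not for nondegeneracy of $\phi$.
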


Applying this result in combination with Prop. \ref{p:CdV_qbnf} and \cite[Thm. 25.2.2]{HormanderBook1985b} on the kernels of adjoints, 
we find that the microlocally elliptic Fourier integral operator $A^*$ associated to the inverse of the symplectomorphism from 
Prop. \ref{p:DH_nf} has a kernel that can be expressed as (\ref{e:A_oi}) on $\Omega'$.  
Note that we will later take the conic open set 
$\Omega$, appearing in our Thm. \ref{thm:main}, to be a subset of $\chi(\Omega')$.

\begin{proof}
    Setting $\eta = \Xi(x;\xi)$ and recalling the fiber rank condition, it follows that $\frac{D\eta}{D\xi}$ is non-singular on $\tO$ 
    and we can therefore apply the Inverse Function Theorem.  
    As $(x;\xi)$ were coordinates on $(\mathcal{C}')^{-1}$, we can replace these with $(x;\eta)$.

Now, a combination of \cite[Thm. 21.2.18]{HormanderBook1985}, \cite[Thm. 25.3.7]{HormanderBook1985b}, and the 
discussion on \cite[p. 30]{HormanderBook1985b} shows there exists a homogeneous generating function $S(y;\eta)$ for 
$\chi^{-1}$.  The neighborhood $\Omega' \subset \chi(\tO)$ comes  from choosing the symbol $b$ to be supported on 
$\chi(\tO)$.
\end{proof}

Next, we recall a result that appears in \cite{Hormander1968}, but recast in the language of 
Lagrangian distributions \cite{FIO1, HormanderBook1985b}; see, e.g.,  \cite[\S4.1]{SoggeBook2017}:

\begin{lem} \label{l:Hormander_half_wave}
Let $P \in \Psi^1(M)$ be self-adjoint with homogeneous principal symbol $p$.  The kernel of $e^{itP}$ is an element 
of 
$I^{-1/4}(\mathbb{R} \times {M} \times M, \mathcal{C}')$ where $\mathcal{C}$ is the canonical relation
\begin{equation}\label{eqn: C for P}
    \mathcal{C} = \left\{ (t,x,y; \tau, \xi, \eta) \, : \, \tau + p(x;\xi) =0 \mbox{ and } (x;\xi) = \exp(tH_p)(y;\eta)\right\}.
\end{equation}
\end{lem}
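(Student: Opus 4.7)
The plan is to construct a parametrix for $U(t):=e^{itP}$ via the geometric optics / WKB method, verify that it is a Lagrangian distribution of the stated order on the stated canonical relation, and then show that the parametrix agrees with the true kernel modulo $C^{\infty}$. Observe first that $U(t)$ solves the half-wave Cauchy problem $(D_t-P)U=0$ with $U(0)=\mathrm{Id}$, so its Schwartz kernel $K_U(t,x,y)$ is a distributional solution with initial datum $\delta(x-y)$. Locally in $t$ near $0$ and in local coordinates on $M$, I would make the ansatz
\[
K_U(t,x,y)\;=\;(2\pi)^{-n}\int_{\R^n}e^{i(\varphi(t,x,\eta)-y\cdot\eta)}\,a(t,x,\eta)\,d\eta,
\]
with $\varphi$ real and homogeneous of degree $1$ in $\eta$, and amplitude $a\in S^0$. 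Substituting into $(D_t-P)K_U=0$ and expanding in decreasing powers of $|\eta|$, the top-order contribution gives the eikonal equation $\partial_t\varphi=p(x,\partial_x\varphi)$ with $\varphi(0,x,\eta)=x\cdot\eta$, solved by Hamilton--Jacobi theory for $|t|$ sufficiently small; the subprincipal terms yield transport equations along bicharacteristics, solved recursively by asymptotic summation so that $a\in S^0$ with $a(0,x,\eta)=1$.

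Next, I would read off the Lagrangian manifold parametrized by $\Phi(t,x,y,\eta):=\varphi(t,x,\eta)-y\cdot\eta$ and identify it with the canonical relation $\mathcal{C}$ of the statement (up to the standard sign flip in the $y$-cotangent fibers passing from $\mathcal{C}$ to the twisted $\mathcal{C}'$). Indeed, the stationary condition $\partial_\eta\Phi=0$ gives $y=\partial_\eta\varphi$, Hamilton's equations for the characteristic flow of the eikonal identify $(x,\partial_x\varphi)$ with $\exp(tH_p)(y,\eta)$, and the eikonal equation together with $\tau=\partial_t\Phi$ yields the characteristic constraint relating $\tau$ to $p(x,\xi)$. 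The order is then obtained from H\"ormander's normalization for Lagrangian distributions given by oscillatory integrals: the ambient manifold $\R\times M\times M$ has dimension $2n+1$, the phase has $n$ fibre variables $\eta$, and the amplitude has order $0$, so the associated distribution lies in $I^{\,0+n/2-(2n+1)/4}=I^{-1/4}$.

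To close the argument, I would verify that the parametrix $\widetilde K$ just constructed agrees with $K_U$ modulo $C^\infty$: their difference $v:=K_U-\widetilde K$ satisfies $(D_t-P)v\in C^{\infty}$ with smooth initial data, so by the standard propagation/energy argument for the half-wave operator (equivalently, applying $e^{-itP}$ and using preservation of $C^\infty$) it follows that $v\in C^\infty$. For $|t|$ larger than any caustic time, extension is achieved via the group law $e^{itP}=(e^{i(t/N)P})^N$ combined with the FIO composition calculus, which applies cleanly because $\mathcal{C}$ is the graph of a homogeneous canonical transformation (and thus composes transversally with itself).

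The main technical obstacle is the global extension past caustics: although the local generating function $\varphi$ ceases to parametrize $\mathcal{C}$ once the Hamiltonian flow no longer projects diffeomorphically onto the base, the Lagrangian $\mathcal{C}$ itself is always globally well defined as the graph of the flow together with the characteristic constraint on $\tau$. The Lagrangian distribution framework handles this precisely by allowing different local phase functions on different charts, patched together by the FIO composition calculus, so the membership $K_U\in I^{-1/4}(\R\times M\times M,\mathcal{C}')$ is preserved for all $t$.
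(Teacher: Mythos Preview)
Your proposal is a correct sketch of the standard parametrix construction for $e^{itP}$; in particular, the eikonal/transport hierarchy, the order count $0+n/2-(2n+1)/4=-1/4$, the identification of the Lagrangian with $\mathcal{C}'$, and the Duhamel-type remainder argument are all sound. However, the paper does not actually prove this lemma: it is stated as a recalled result, with a one-line attribution to H\"ormander's 1968 paper recast in the FIO language of \cite{FIO1, HormanderBook1985b}, and a pointer to \cite[\S4.1]{SoggeBook2017} for an exposition. So there is nothing to compare against---your argument is essentially the proof one finds in those references, and is appropriate here.
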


Note that differentiation of this  is allowed  and gives a precise description of the 
kernels of derivatives of $e^{itP}$ with respect to $t$,  within the FIO framework, namely, for $j\ge 1$, 
\begin{equation}\label{e: deriv of eitP}
K_{\frac{d^j}{dt^j}\left(e^{itP}\right)}\in I^{j-\frac14}(\mathbb{R} \times {M} \times M, \mathcal{C}').
\end{equation}

Now, we also recall a useful result that takes the above lemma a step further:

\begin{lem} \label{l:model_prop_Lag} 
Let $\chi$ be the symplectomorphism as in Prop. \ref{p:CdV_qbnf}. Choose $\epsilon_0$ and the conic neighborhood 
$W_{\epsilon_0/(n+1)}$ as described in (\ref{e:nested_flow_nbhds}).  Let $\mathcal{C}_{\overline{P^0}}$ be the
canonical relation associated to the joint Hamiltonian flow 
of the symbols $p^0_1, \dots, p^0_n$; that is,
\begin{equation*}
\mathcal C_{\overline P^0} = \{(\overline t,\overline\tau, x,\xi,y,\eta):\, (x,\xi) = \Phi_1^{t_1}\circ\dotsm\circ\Phi_n^{t_n}(y,\eta),\, \xi 
= \overline \tau\}.
\end{equation*}

For a possibly smaller $\epsilon_0 >0$,  for all $ Q_{\chi(W_{\epsilon_0/(n+1)})} \in \Psi^0(\R^n) $ with 
$\supp Q_{\chi(W_{\epsilon_0/(n+1)})} \subset \chi(W_{\epsilon_0/(n+1)})$ and for all $\overline{t} \in \overline{J}(\epsilon_0)$, we have
\begin{equation*}
K_{ e^{i \overline{t} \cdot \overline{P^0}} Q_{\chi(W_{\epsilon_0/(n+1)})}} \in I^{-n/4}(\mathbb{R}^n \times \mathbb{R}^n \times 
\mathbb{R}^n, \mathcal{C}_{\overline{P}^0}').
\end{equation*}
Moreover, in Cartesian coordinates on $\mathbb{R}^n$, $K_{ e^{i \overline{t} \cdot \overline{P^0}} Q_{\chi(W_{\epsilon_0/(n+1)})})} $  
satisfies:
\begin{equation} \label{e:prop_model_integral}
K_{ e^{i \overline{t} \cdot \overline{P^0}} Q_{\chi(W_{\epsilon_0/(n+1)})}} (\overline{t}, x,z) - \frac{1}{(2\pi)^n} \int_{\mathbb{R}^n}
e^{i \left[ (x-z) \cdot \xi + \overline{t} \cdot \xi \right]} \, q(\overline{t},x,z; \xi) d \xi \in C^{\infty}(\mathbb{R}^n \times \mathbb{R}^n \times 
\mathbb{R}^n)
\end{equation}
where $q \in S^0(\mathbb{R}_{\overline{t}}^n \times \mathbb{R}_x^n \times \mathbb{R}_z^n \times \mathbb{R}_{\xi}^n \backslash \{0\})$ 
has a polyhomogeneous expansion with leading term
\begin{align} \label{e:princ_symbol_model}
\begin{split}
 & q_n^{(0)}(t_n, x, x + t_{n}e_{n};\xi) 
 \times  q_{n-1}^{(0)}(t_{n-1}, x+ t_{n}e_{n}, x + t_{n}e_{n} + t_{n-1} e_{n-1}; \xi) \times \\
& \quad  \dots \times q_2^{(0)}(t_2, x+ \sum_{i=3}^{n} t_i e_i, x +  \sum_{i=2}^{n} t_i e_i, \xi) 
\times q_1^{(0)}(t_1, x+ \sum_{i=2}^{n} t_i e_i, z; \xi).
\end{split}
\end{align}
Here, $q_j^{(0)}$ is the principal symbol of the kernel of (microlocalized) $e^{i t_j P_j^0}$ and $e_i$ is the $i$-th standard basis vector.

\end{lem}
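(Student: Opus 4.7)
\medskip

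\noindent\textbf{Proof proposal.} The strategy is to exploit the fact that on $\mathbb{R}^n$ the model operators $P_j^0 = D_{x_j}$ generate translations: $e^{it_j P_j^0} f(x) = f(x + t_j e_j)$, so the joint propagator $e^{i \overline{t} \cdot \overline{P^0}}$ acts as $f(x) \mapsto f(x + \sum_{j=1}^{n} t_j e_j)$, with Schwartz kernel $(2\pi)^{-n}\int e^{i[(x-z)\cdot\xi + \overline{t}\cdot\xi]}\, d\xi$. This immediately identifies $\mathcal{C}_{\overline{P^0}}$ as stated, since the Hamilton vector field of $p_j^0 = \xi_j$ is $\partial_{x_j}$. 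However, since the statement requires composition with a nontrivial microlocal cutoff $Q_{\chi(W_{\epsilon_0/(n+1)})}$ and asks for a polyhomogeneous symbol expansion with a specific iterated form, I would proceed by FIO composition rather than by brute force.

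\smallskip

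\noindent\textbf{Step 1.} Apply Lemma \ref{l:Hormander_half_wave} to each $e^{it_j P_j^0}$ separately: its kernel lies in $I^{-1/4}(\R \times \R^n \times \R^n, \mathcal{C}_{P_j^0}')$, where $\mathcal{C}_{P_j^0}$ is the graph relation of the $x_j$-translation flow. Since $P_j^0$ has principal symbol $\xi_j$, one may write, after composing with an appropriate cutoff $Q_j$ microlocally supported near $\chi(W_{\epsilon_0/(n+1)})$,
\begin{equation*}
K_{e^{it_j P_j^0} Q_j}(x,z) = (2\pi)^{-n}\int e^{i[(x-z)\cdot \xi + t_j \xi_j]}\, q_j(t_j, x, z;\xi)\,d\xi \mod C^\infty,
\end{equation*}
for some $q_j \in S^0$ with principal part $q_j^{(0)}$ determined by the symbol of $Q_j$ along the translation diagonal $z = x + t_j e_j$.

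\smallskip

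\noindent\textbf{Step 2.} Compose the $n$ kernels iteratively from right to left. At each step the two canonical relations involved are graphs of commuting Hamiltonian flows, so their composition is automatically clean (in fact transverse, since the differentials of the flow maps are the identity on the $\xi$-variables), and the composed canonical relation is exactly $\mathcal{C}_{\overline{P^0}}'$. The order adds: $n$ factors each of order $-1/4$, plus a cutoff of order $0$, yielding total order $-n/4$. A further shrinking of $\epsilon_0$ may be needed to ensure that the intermediate points of the iterated translation, namely $x + \sum_{i=j+1}^n t_i e_i$, stay within a conic neighborhood where all the cutoffs $Q_j$ can be arranged to equal $I$ microlocally; the nested chain $\{W_{k\epsilon_0/(n+1)}\}$ together with \eqref{e:time_intervals} guarantees this is achievable.

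\smallskip

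\noindent\textbf{Step 3.} To identify the symbol \eqref{e:princ_symbol_model}, compute the composition of kernels explicitly: writing
\begin{equation*}
K_{e^{i\overline{t}\cdot \overline{P^0}} Q}(x,z) = \int K_{e^{it_n P_n^0}Q_n}(x, w_{n-1}) K_{e^{it_{n-1} P_{n-1}^0}Q_{n-1}}(w_{n-1}, w_{n-2}) \cdots K_{e^{it_1 P_1^0} Q}(w_1, z)\, dw_1 \cdots dw_{n-1},
\end{equation*}
one sees that the intermediate variables are forced by the translation structure to be $w_{n-j} = x + \sum_{i=n-j+1}^n t_i e_i$. Performing the $\xi$-integrations by stationary phase (each one is actually exact, since the phases are linear) collapses all the integrals down to a single $\xi$-integral with phase $(x-z)\cdot\xi + \overline{t}\cdot\xi$ and amplitude given by the product of the individual $q_j$'s evaluated at the prescribed intermediate points. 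Passing to principal symbols yields \eqref{e:princ_symbol_model}.

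\smallskip

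\noindent\textbf{Main obstacle.} The computations themselves are routine because the $P_j^0$ commute and generate translations, making every FIO composition transverse and every $\xi$-integration explicit. The genuine bookkeeping task is verifying that the iterated microlocal cutoffs can be chosen so that the product structure in \eqref{e:princ_symbol_model} emerges cleanly, i.e., that at each intermediate point the relevant $q_k^{(0)}$ is evaluated at a point where the cutoff supports nest correctly. This is precisely what the nested chain \eqref{e:nested_flow_nbhds} with the time-interval bound \eqref{e:time_intervals} was designed for, so the main work is to invoke \eqref{e:time_intervals} to show that a single $\epsilon_0$ suffices for all $n$ iterates.
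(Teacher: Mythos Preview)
Your proposal is correct and follows essentially the same route as the paper: apply Lemma~\ref{l:Hormander_half_wave} to each factor $e^{it_jP_j^0}$, then iterate the FIO composition calculus (the paper invokes clean intersection and stationary phase in the intermediate $(w,\xi)$-variables, just as you do), using the nested cutoffs from \eqref{e:nested_flow_nbhds}--\eqref{e:time_intervals} to control supports at each step. The only refinements the paper adds are an explicit frequency localization $|\xi^2|\approx|\xi^1|$ before each stationary phase step and an explicit formula for the order $-1$ remainder; your parenthetical that the $\xi$-integrations are ``actually exact'' is a slight overstatement (stationary phase in $(w,\xi)$ with a bilinear phase still produces a full asymptotic expansion in the symbol), but the leading-term computation is unaffected.
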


\begin{proof} 

One observes that the classic construction of the parametrix for the half-wave operator, as a Lagrangian distribution, by H\"ormander 
\cite{Hormander1968} (see also \cite{SoggeBook2017}) is valid for our self-adjoint $P^0_j$ despite them not being elliptic.  While this 
effects their spectrum as operators on $L^2(\mathbb{R}^n)$, we are only concerned with microlocal quantities.  

In Cartesian coordinates 
 $x \in \mathbb{R}^n$, one can may use $(x-y) \cdot \xi + t_j \xi_j$ as the phase function of $e^{it_j P^0_j}$, 
which parametrizes $\mathcal{C}_{P^0_j}$, the canonical relation of $e^{it_j P_j^0}$. Furthermore, since $P_j^0$ is a differential 
operator with zero subprincipal symbol, there are substantial simplifications in the transport equations which lead to the construction of 
the propagator's full symbol: 
\begin{equation} \label{eqn:jth_propag_full_symb}
\sigma_{full} \left( e^{i t_j P_j^0} Q_{\chi(W_{j\epsilon_0/(n+1)})} \right) = \sum_{k=0}^{\infty} q_j^{(-k)} 
\end{equation}
where $q_j^{(-k)} \in S^{-k}(\mathbb{R}_t \times \mathbb{R}_x^n \times \mathbb{R}_y^n \times \mathbb{R}^n_{\xi})$.  In particular, we 
have such an expansion for $|t_j| \leq \epsilon_0/(n+1)$ where $\epsilon_0$ is now possibly smaller.  
Note again that the Schwartz kernel 
of the difference
\begin{equation} \label{eqn:cutoff_propag_equiv}
e^{i t_j P_j^0} Q_{\chi(W_{j\epsilon_0/(n+1)})} - Q_{\chi(W_{(j+1)\ve_0/(n+1)})} e^{i t_j P_j^0} 
Q_{\chi(W_{j\epsilon_0/(n+1)})} : L^2(\mathbb{R}^n) \rightarrow C^{\infty}(\mathbb{R}^n)
\end{equation}
is a smooth function.  Here the  symbols of $Q_{\chi(W_{j\epsilon_0/(n+1)})}$ and $Q_{\chi(W_{(j+1)\epsilon_0/(n+1)})}$ are again 
configured so that $Q_{W_{2\epsilon_0/(n+1)}} Q_{W_{\epsilon_0/(n+1)}}  = Q_{W_{\epsilon_0/(n+1)}}$ modulo $C^\infty$. 
The smoothness claim follows from the propagator's kernel being a Lagrangian distribution, with
$WF' (e^{i t_j P_j^0}) \subset \mathcal{C}_{P_j^0}$, and 
\[ \left (I - Q_{\chi(W_{(j+1)\epsilon_0/(n+1)})} \right) Q_{\chi(W_{j\epsilon_0/(n+1)})} \in \Psi^{-\infty}(\R^n). \]  
These facts allow us to compute the principal symbol (in fact, the full symbol in our 
coordinate system) of the operator $ e^{i \overline{t} \cdot \overline{P^0}} Q_{\chi(W_{\epsilon_0/(n+1)})}.$ 

To calculate the kernel of the composition  $e^{i[t_1P_1 + t_2 P_2]}Q_{\chi(W_{\epsilon_0/(n+1)})}$,
we follow the proof of \cite[Thm. 25.2.2]{HormanderBook1985b}.  
Let $K_2(t_2, x^2,y^2)$ and $K_1(t_1, y^2,y^1)$ be the Schwartz kernels of $e^{i t_1 P_1^0} Q_{\chi(W_{\epsilon_0/(n+1)})}$ 
and $e^{i t_2 P_2^0} Q_{\chi(W_{2\epsilon_0/(n+1)})}$, respectively.  Furthermore, let $q_2(t_1, x^2,y^2;\xi^2)$  
and $q_1(t_1, y^2, y^1; \xi^1) $ denote the respective full symbols as in (\ref{eqn:jth_propag_full_symb}).

The function $(x^2-y^2)\cdot\xi^2 + t_2\xi^2_2 + (y^2-y^1)\cdot \xi^1 + t_1\xi^1_1)$ is a clean phase function, as is shown by a 
straightforward calculation. 
We now need to localize to where $|\xi^2| \approx |\xi^1|$.  That is, we find that there exists a
homogeneous of degree 0 cutoff $\chi_0^{2,1}(t_2, t_1, x^2,y^2,y^1; \xi^2, \xi^1)$ which is equal to 1 on the region 
$\{ |\xi^2| /(2C_{2,1})< |\xi^1| < \left(C_{2,1}/2\right) |\xi^2|\}$ and supported on $\{ |\xi^2|/C_{2,1} < |\xi^1| < C_{2,1} |\xi^2|  \}$ 
for  some $C_{2,1} > 0$ depending on $\mathcal{C}_{P_1^0}$ and $\mathcal{C}_{P_2^0}$ such that,
for  $\overline{t} \in I(\epsilon_0)$, 
\begin{align*}
 \int K_2(x^2,y^2) \, K_1(y^2, y^1) \, dy^2 & \\
 - \int \int \int & \,e^{i[(x^2-y^2)\cdot \xi^2  + t_1 \xi_1^1 + t_2 \xi_2^2 + (y^2 - y^1) \cdot \xi^1]} 
 \chi_0^{2,1}(t_2, t_1, x^2,y^2,y^1; \xi^2, \xi^1) \, \times \\
& \quad q_2(t_2, x^2,y^2;\xi^2) \, q_1(t_1, y^2, y^1; \xi^1) \, d\xi^1 \, d\xi^2 \, dy^2 
\end{align*}
belongs to $ C^{\infty}(\mathbb{R}^n \times \mathbb{R}^n)$.  
This can be seen explicitly via integration by parts in  $y^2$.

Applying stationary phase in the variables $(y^2, \xi^2)$ to the triple integral above yields the phase function 
$(x^2-y^1)\cdot \xi^1 + t_1 \xi_1^1 + t_2 \xi_2^1$ and a full symbol whose expansion is given by
\begin{equation*}
\sum_{\nu \geq 0} \frac{1}{(2i)^{\nu}} \left( \langle D_{y^2}, JD_{\xi^2} \rangle^{\nu} \chi_0^{2,1}(x^2,y^2,y^1; 
\xi^2, \xi^1) q_2(t_1, x^2,y^2;\xi^2) q_1(t_1, y^2, y^1; \xi^1)  \right)_{\upharpoonright y^2 =x^2 + t_2 e_2, \, \xi^2 = \xi^1}.
\end{equation*}
 Since $\chi_0^{2,1}$ equals 1 at the critical point, one sees that the principal term is of the form 
 $q_2^{(0)}(t_2, x^2, x^2+t_2 e_2; \xi^1) q_1(t_1, x^2 + t_2 e_2, y^1; \xi^1)$.  We leave it to the reader to verify that 
 $$(x^2-y^2)\cdot\xi^2 + t_2\xi^2_2 + (y^2-y^1)\cdot \xi^1 + t_1\xi^1_1$$ 
 parametrizes $\mathcal{C}_{P_2^0} \circ \mathcal{C}_{P_1^0}$ and satisfies the appropriate clean condition.
 
 Repeating this calculation $n-2$ more times whilst invoking (\ref{eqn:cutoff_propag_equiv}), 
 we obtain the expression in (\ref{e:princ_symbol_model}) and the expansion for the remaining 
 terms in $q$ from \eqref{e:prop_model_integral} are
\begin{align*} 
R(\overline{t}, x^n,y^1; \xi^1) := \sum_{\nu \geq 1} \frac{1}{(2i)^{\nu}} & \Bigg[\langle D_{y^n}, JD_{\xi^n} \rangle^{\nu} 
\chi(W_{n\epsilon_0/(n+1)})(x; \xi^n) \chi_0^{n,n-1}(x^n,y^n,y^1;\xi^n, \xi^1) q_n(t_n, x^n,y^n;\xi^n)\\
& \times q_{12\dots(n-1)}(t_1,\dots, t_{n-1}, y^n,y^1;\xi^1) \Bigg]_{y = z + t_{n} e_{n}, \xi^n = \xi^1}.
\end{align*}
One sees that  $R(\overline{t}, x^n,y^1; \xi^1) \in S^{-1}(\mathbb{R}^n \times 
\mathbb{R}^n \times \mathbb{R}^n \times \mathbb{R}^n \backslash \{0\})$.
 Here, $q_{12\dots(n-1)}$ is the full symbol of the microlocalized propagator for the 
 first $n-1$ Hamiltonians $P_1, \dots, P_{n-1}$, 
 namely $e^{i \sum_{j=1}^{n-1} {t_jP^0_j}} Q_{\chi(W_{\epsilon_0/(n+1)})}$.
\end{proof}

In the next lemma, we record the asymptotics for the microlocalized spectral measure for the model propagators:

\begin{lem} [(Microlocalized spectral measure on $\mathbb{R}^n$)] \label{l:microlocalized_measure_Euclidean}

Let $\overline{\mu} \in \R^n$ and
$A_j\in\Psi^{\alpha_j}(\R^n)$  for some $\alpha_j\in\R,\,  j=1,2$, with principal symbols
 $a^{(0)}_j(x,\xi)$. Let $\epsilon > 0$, and $\rho\in\mathscr S(\R)$  such that 
 $\supp \wh\rho\subset (-\epsilon,\epsilon)$ and $\wh\rho(s) = 1$ for all $|s|\le \epsilon/2$. Let $K(x,y;\overline{\mu})$ 
 be the Schwartz kernel of $A_1\circ\rho(P_1^0 - \mu_1)\circ\dotsm\circ\rho(P_n^0 - \lambda \mu_n)\circ A_2^*$, where the operator
\begin{equation*}
    \rho(P_1^0 - \mu_1)\circ\dotsm\circ\rho(P_n^0 - \mu_n) := \frac{1}{(2\pi)^{n}} \int\limits_{\R^{n}} e^{i \overline{t} \cdot 
    \overline{P^0}} e^{-i \overline{t} \cdot \overline{\mu}} \, \wh\rho(t_1)\dotsm\wh\rho(t_n)\,d\tbar
\end{equation*}
is defined via the spectral calculus.

Then, there exists
 $C_0,\, C_1 >0 $ such that for all $|\overline{\mu}| \ge C_0$, we have 
\[ K(x,y, \overline{\mu}) = \frac{|\overline{\mu}|^{\alpha_1+\alpha_2}}{(2\pi)^n}e^{i\lambda ( x -y ) \cdot \overline{\mu}}a_1 
\left( x, \frac{\overline{\mu}}{|\overline{\mu}|}) \right) \overline{a_2 \left( y, \frac{\overline{\mu}}{|\overline{\mu}|} \right)} 
+  R(x,y, \overline{\mu}),\]
where 
\[\sup\limits_{|x-y| \le \ve/2}\left| R(x,y,\overline{\mu})\right| \le C_1|\mu|^{\alpha_1+\alpha_2-1}.\]

\end{lem}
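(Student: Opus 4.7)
The plan is to exploit the fact that the model operators $P_i^0=D_{x_i}$ commute and have symbol $\xi_i$, so that the smoothed spectral projector in the middle is just a Fourier multiplier. Indeed, by the spectral calculus (or directly by Fourier inversion applied to the defining integral of $\rho(P_i^0-\mu_i)$), one has
\begin{equation*}
\rho(P_1^0-\mu_1)\circ\cdots\circ\rho(P_n^0-\mu_n)=\mathrm{Op}\bigl(m_{\bar\mu}\bigr),\qquad m_{\bar\mu}(\xi):=\prod_{i=1}^n\rho(\xi_i-\mu_i).
\end{equation*}
Using Kohn--Nirenberg quantization for $A_1$ and for $A_2^*$, a direct kernel computation (composing the three kernels and doing the $z,z'$ integrations, which yield $\delta$'s identifying the three frequency variables) gives, modulo lower-order contributions coming from passing between left and right quantizations,
\begin{equation*}
K(x,y;\bar\mu)=\frac{1}{(2\pi)^n}\int_{\R^n} e^{i(x-y)\cdot\xi}\,a_1(x,\xi)\,\overline{a_2(y,\xi)}\,m_{\bar\mu}(\xi)\,d\xi.
\end{equation*}

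Next I would make the substitution $\xi=\bar\mu+\eta$ to center the frequency localization at the origin,
\begin{equation*}
K(x,y;\bar\mu)=\frac{e^{i(x-y)\cdot\bar\mu}}{(2\pi)^n}\int_{\R^n}e^{i(x-y)\cdot\eta}\,a_1(x,\bar\mu+\eta)\,\overline{a_2(y,\bar\mu+\eta)}\,\prod_{i=1}^n\rho(\eta_i)\,d\eta,
\end{equation*}
and decompose each $a_j=a_j^{(0)}+r_j$ with $r_j\in S^{\alpha_j-1}$. For the principal part, I would Taylor expand $a_j^{(0)}(x,\bar\mu+\eta)=a_j^{(0)}(x,\bar\mu)+E_j(x,\bar\mu,\eta)$ with a first-order remainder $E_j$. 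The main term then factors out of the $\eta$-integral, and
\begin{equation*}
\int_{\R^n}e^{i(x-y)\cdot\eta}\prod_{i=1}^n\rho(\eta_i)\,d\eta=\prod_{i=1}^n\widehat\rho(x_i-y_i)=1
\end{equation*}
whenever $|x-y|\le\varepsilon/2$ (using that $\rho$ is even, so $\widehat\rho$ is even, and that $\widehat\rho\equiv1$ on $[-\varepsilon/2,\varepsilon/2]$). Homogeneity of $a_j^{(0)}$ converts $a_j^{(0)}(x,\bar\mu)$ into $|\bar\mu|^{\alpha_j}a_j^{(0)}(x,\bar\mu/|\bar\mu|)$, yielding exactly the asserted leading term.

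Finally, the remainder analysis proceeds by estimating three types of error integrals: those involving $r_j$ in place of $a_j^{(0)}$, and the Taylor remainders $E_j$. For $|\bar\mu|\ge C_0$ large enough, split the $\eta$-integration into $|\eta|\le|\bar\mu|/2$ and $|\eta|>|\bar\mu|/2$. On the first region, $|\bar\mu+s\eta|\asymp|\bar\mu|$, so symbol estimates give pointwise bounds $|r_j|\lesssim|\bar\mu|^{\alpha_j-1}$ and $|E_j|\lesssim|\eta|\,|\bar\mu|^{\alpha_j-1}$; multiplying by $\prod\rho(\eta_i)$ and integrating gives a contribution $\lesssim|\bar\mu|^{\alpha_1+\alpha_2-1}$, uniformly in $(x,y)$ with $|x-y|\le\varepsilon/2$. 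On the second region, one uses polynomial symbol growth together with the Schwartz decay of $\rho$ (any number of factors of $\langle\eta\rangle^{-N}$) to obtain an $O(|\bar\mu|^{-\infty})$ contribution. The bookkeeping obstacle---and the main, though routine, difficulty---is ensuring these bounds are uniform in the off-diagonal variable $x-y$ while simultaneously absorbing the lower-order compound-symbol corrections produced in the composition $A_1\,\mathrm{Op}(m_{\bar\mu})\,A_2^*$; the latter all land in $S^{\alpha_1+\alpha_2-1}$ and are therefore absorbed into $R(x,y,\bar\mu)$ with the same bound.
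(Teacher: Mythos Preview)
Your proposal is correct and leads to the same conclusion, but the route differs from the paper's in one essential respect. The paper keeps the $\bar t$-integral in play: after writing
\[
K(x,y;\bar\mu)=\frac{1}{(2\pi)^{2n}}\int_{\R^{2n}} e^{i\bigl((x-y)\cdot\xi+\bar t\cdot(\xi-\bar\mu)\bigr)}a_1(x,\xi)\overline{a_2(y,\xi)}\,\widehat\rho(t_1)\cdots\widehat\rho(t_n)\,d\xi\,d\bar t,
\]
it rescales $\xi\mapsto|\bar\mu|\xi$ and applies stationary phase in the $2n$ variables $(\xi,\bar t)$, with critical point $\xi_0=\bar\mu/|\bar\mu|$, $\bar t_0=y-x$; the factors $\widehat\rho(y_k-x_k)$ then drop out exactly as in your argument. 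You instead perform the $\bar t$-integration first, reducing to the Fourier multiplier $\prod_i\rho(\xi_i-\mu_i)$, then shift $\xi=\bar\mu+\eta$ and Taylor expand. Your approach is more elementary and makes the Schwartz localization of $\rho$ do the work that stationary phase does in the paper; the paper's approach has the advantage of sitting inside the general ``symbol maps for Lagrangian distributions'' framework (it is explicitly modeled on \cite[Prop.~25.1.5]{HormanderBook1985b}), which is what the authors need for the subsequent FIO compositions. One small point: your hedge ``modulo lower-order contributions coming from passing between left and right quantizations'' is unnecessary---with left quantization for $A_1$ and the adjoint kernel for $A_2^*$, the identity $\int A_1(x,z)e^{i(z-w)\cdot\xi}A_2^*(w,y)\,dz\,dw=e^{i(x-y)\cdot\xi}a_1(x,\xi)\overline{a_2(y,\xi)}$ holds exactly with the full symbols, as the paper computes.
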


\begin{rmk}
\textnormal{
Note that in the applications below, we shall choose $A_1,A_2$ to be order zero pseudodifferential cutoffs, 
but it does not complicate the proof to give the statement for operators of non-zero order. 
}
\end{rmk}

\begin{proof}
The proof is standard but we essentially reproduce the proof \cite[Prop. 25.1.5]{HormanderBook1985b} (``symbol maps for 
Lagrangian distributions") for our explicitly given FIOs. Observe that in the sense of distributions, we have 
\begin{align*}
\int\limits_{\R^{2n}} A_1(x,z)e^{i\langle z-w,\xi\rangle}A_2^*(w,y)\,dw\,dz & = \frac{1}{(2\pi)^{2n}}\int\limits_{\R^{4n}} e^{i (x-z) \cdot \eta} 
a_1(x,\eta)e^{i\langle z-w,\xi\rangle}\overline{a_2(y,\zeta)}e^{i (w-y) \cdot \zeta}\,dw\,d\zeta\,dz\,d\eta\\
& = \frac{1}{(2\pi)^{2n}}\int\limits_{\R^{2n}} e^{i(z-w) \cdot \xi }\wh a_1(x,z-x) \overline{\wh a_2(y,w-y)}\,dw\,dz\\
& = \frac{1}{(2\pi)^{2n}}e^{i (x-y) \cdot \xi }\int\limits_{\R^{2n}} e^{i\langle z -w ,\xi\rangle} \wh a_1(x,z)\overline{\wh a_2(y,w)}\,dw\,dz\\
& = e^{i (x-y) \cdot \xi }a_1(x,\xi)\overline{a_2(y,\xi)}.
\end{align*}
Thus, we have that 
\begin{align*}
&A_1\circ\rho(D_{x_1} - \mu_1)\circ\dotsm\circ\rho(D_{x_n} - \mu_n)\circ A_2^*(x,y) \\
& \hskip 0.5in = \frac{1}{(2\pi)^{2n}}\int\limits_{\R^{2n}}e^{i (x - y) \cdot \xi  + i \tbar \cdot (\xi-\overline{\mu})}a_1(x,\xi)\overline{a_2(y,\xi)}
\wh\rho(t_1)\dotsm\wh\rho(t_n)\,d\xi\,d\tbar.
\end{align*}
Changing variables via $\xi\mapsto |\mu| \xi$, the last integral becomes
\begin{equation}\label{eq:a1a2_int}
\frac{|\mu|^{n}}{(2\pi)^{2n}}\int\limits_{\R^{2n}}e^{i|\overline{\mu}|\lp ( x- y) \cdot \xi  + \tbar \cdot (\xi-\frac{\overline{\mu}}{|\overline{\mu}|})
\rp} a_1(x,|\overline{\mu}|\xi)\overline{a_2(y,|\overline{\mu}|\xi)}\wh\rho(t_1)\dotsm\wh\rho(t_n)\,d\xi\,d\tbar.
\end{equation}
By definition, we have that 
\[a_j(x,\xi) - a_j^{(0)}(x,\xi) \in S^{\alpha_j-1}(T^*M),\quad j=1,2,\] 
and since $a_j$ is homogeneous of degree $\alpha_j$, we have that \eqref{eq:a1a2_int} is equal to 
\begin{equation}\label{e:model_int}
\frac{|\overline{\mu}|^{n+\alpha_1+\alpha_2}}{(2\pi)^{2n}}\int\limits_{\R^{2n}} e^{i|\overline{\mu}|\lp ( x- y) \cdot \xi  + \tbar \cdot (\xi-
\frac{\overline{\mu}}{|\overline{\mu}|})\rp}  a_1^{(0)}(x,\xi)\overline{a_2^{(0)}(y,\xi)}\wh\rho(t_1)\dotsm\wh\rho(t_n)\,d\xi\,d\tbar + R_1(x,y,
\overline{\mu}),
\end{equation}
where $R_1$ is defined by 
\begin{equation}\label{e:R_defn}
R_1(x,y,\overline{\mu}) = \frac{|\overline{\mu}|^n}{(2\pi)^n}\int\limits_{\R^{2n}}e^{i|\overline{\mu}|\lp ( x- y) \cdot \xi  + \tbar \cdot (\xi-
\frac{\overline{\mu}}{|\overline{\mu}|})\rp} r_1(x,y,|\overline{\mu}|\xi)\wh\rho(t_1)\dotsm\wh\rho(t_n)\,d\xi\,d\tbar,
\end{equation}
with $r_1(x,y,\xi): = a_1(x,\xi)a_2(y,\xi) - a_1^{(0)}(x,\xi)\overline{a_2^{(0)}(y,\xi)}\in S^{\alpha_1+\alpha_2-1}$. 

We ignore $R_1$ for 
now and focus on the leading term in \eqref{e:model_int}. 
Let us denote the phase function by 
\[\phi(\tbar,\xi;x,y) :=  x-y \cdot \xi + \tbar \cdot \left( \xi-\frac{\overline{\mu}}{|\overline{\mu}|} \right).\]
Observe that 
\[\nabla_\xi\phi = x-y + \overline{t},\quad \text{and}\quad \nabla_{\tbar}\phi = \xi-\frac{\overline{\mu}}{|\overline{\mu}|}.\]
Thus, for each fixed $x,y$, there is a unique critical point at $\xi_0 = \frac{\overline{\mu}}{|\overline{\mu}|},\, \tbar_0 = y-x.$ Computing the 
Hessian of $\phi$ at this critical point, we have 
\[\Hess_{\xi,\tbar}\phi(\xi_0,\tbar_0;x,y) = \lp\begin{array}{cc}
0 & I_n\\
I_n & 0
\end{array}\rp,\]
where $I_n$ denotes the $n\times n$ identity matrix. This matrix has determinant $(-1)^n$ and signature 0, since its eigenvalues are 
$\pm 1$, each with multiplicity $n$.  Furthermore, this continues to hold for each $\overline{\mu} \neq 0$: 
that is, for every $\overline{\mu}$, the $(\xi_0, \overline{t}_0)$ is unique.  Since $\xi \neq 0$, we can introduce a cutoff 
$\beta \left( \frac{\xi}{|\overline{\mu}|} \right)$ into (\ref{e:model_int}) that equals one for $|\xi| \in [C^{-1}, C]$ for some sufficiently large 
$C>0$ so that $\beta \left( \frac{\xi}{|\overline{\mu}|} \right) = 1$ at $(\xi_0, \overline{t}_0)$.  Note that the difference between 
(\ref{e:model_int}) as written and (\ref{e:model_int}) with the cutoff $\beta$ is $\mathcal{O}_{N}(|\overline{\mu}|^{-N})$ 
for $|\overline{\mu}|  \geq C_0$, for an appropriate $C_0$, as
 seen by integrating by parts in $\overline{t}$. 
Therefore, by stationary phase, we have that the integral in \eqref{e:model_int} is equal to 
\[\frac{|\overline{\mu}|^{n+\alpha_1+\alpha_2}}{(2\pi)^{2n}}|\overline{\mu}|^{-n}e^{i|\overline{\mu}|\langle x -y,\cbar\rangle}a_1^{(0)}(x,
\overline{\mu})\overline{a_2^{(0)}(y,\overline{\mu})}\wh\rho(y_1-x_1)\dotsm\wh\rho(y_n-x_n) 
+ \mathcal O(|\overline{\mu}|^{\alpha_1+\alpha_2-1}).
\]
If $\epsilon$ is chosen sufficiently small, then $|x-y|< \epsilon/2$ implies that $\wh\rho(y_k-x_k) = 1$ for all $k$. 
Thus, we can simplify the 
above expression to 
\begin{align*}
A_1&\circ\rho(D_{x_1} - \mu_1)\circ\dotsm  \circ\rho(D_{x_n} - \mu_n)\circ A_2^*(x,y)\\
&
= \frac{|\overline{\mu}|^{\alpha_1+\alpha_2}}{(2\pi)^n}e^{i|\overline{\mu}|( x -y) 
\cdot \frac{\overline{\mu}}{|\overline{\mu}|}}\sigma(a_1)(x,\overline{\mu})\overline{\sigma(a_2)(y,\overline{\mu})}\\
& \hskip 1.4in + R_2(x,y,\overline{\mu}),
\end{align*}
where 
\[\sup\limits_{d_g(x,y)\le \frac{\epsilon}{2}}\left| R_2(x,y,\overline{\mu})\right| = \mathcal O(|\overline{\mu}|^{\alpha_1+\alpha_2-1}).\]
To complete the proof, we consider the contribution of $R_1(x,y,\overline{\mu})$ to \eqref{e:model_int}. 
Noting that the symbol $r_1(x,y,\xi)$ in \eqref{e:R_defn} is of order $\alpha_1+\alpha_2-1$ in $\xi$, 
we observe that repeating the preceding 
stationary phase argument one more time to \eqref{e:R_defn} yields a similar asymptotic wherein each term is 
one order lower in $|\overline{\mu}|$, thus yielding a  contribution of order $\alpha_1+\alpha_2-1$. 

\end{proof}
\color{black}


\section{Microlocalized smoothed spectral measure asymptotics} \label{sect:microlocal_asymps} 

Recall the chain of strict containments,
\begin{equation}
    W_{\epsilon_0/(n+1)}  \subset W_{2\epsilon_0/(n+1)} \subset \dots \subset W_{\epsilon_0} .
\end{equation}
described below Remark \ref{rm: b},
as well as the set $\Omega'$ from Prop. \ref{p:generating_function}.  
At this stage, we are  in a position  to describe precisely and  impose the restriction (ii) we impose on $\epsilon_0$, 
 referred to below \eqref{e:time_intervals},
 in addition to the condition (i) stated there.
 Namely, we take  $\epsilon_0=\epsilon_0(U)>0$ small enough so that 
\begin{equation}\label{e:ii}
\qquad W_{\epsilon_0} \subset \chi({\Omega'})\subset \tO.
\end{equation}
We then define the set $\Omega\subset T^*M\setminus 0$ on which we will henceforth work:
\begin{equation} \label{eqn:Omega_set}
    \Omega : = W_{\epsilon_0/(n+1)} \subset \chi^{-1}(\Omega').
\end{equation} 
In particular, Lemma \ref{l:prop_diff} applies on $\Omega$.

Take $(w_0, \zeta_0) \in \Omega$; then the conic open neighborhoods $\Omega$ and $\Omega'$ of $(w_0, \zeta_0)$ and 
$\chi(w_0, \zeta_0)$, resp., have the following properties: the normal form from Prop. \ref{p:CdV_qbnf} holds, our joint 
propagator equivalence Lemma \ref{l:prop_diff} holds, and the FIOs $A^*,B$ from Prop. \ref{p:CdV_qbnf} can be simultaneously 
microlocalized to the same regions and thus have the representations described in Prop. \ref{p:generating_function} (with the 
same phase functions).  We  now  obtain asymptotics for the microlocalized spectral 
measure of the original QCI system satisfying the fiber-rank condition (see Def. \ref{defn:QCI}):

\begin{prop}\label{prop:smooth_spec_measure_microlocal}
Let $Q_{\Omega} \in \Psi^0(M)$  with  $\supp Q_{\Omega} \subset \Omega$
and $\rho \in \mathscr{S}(\mathbb{R})$ with $\supp \hat{\rho}  \subset (-\epsilon_0, \epsilon_0)$. 
Then, there exist $a \in S^0((\mathbb{R}^n \times M) \times \mathbb{R}^n),b \in S^0(( M \times \mathbb{R}^n) \times \mathbb{R}^n), 
q^{(0)} \in S^0\left(\left(\mathbb{R}^n \times \mathbb{R}^n \times \mathbb{R}^n\right) \times \mathbb{R}^n\right)$, a generating function 
$S(y;\eta)$
defined on $\Omega$, and $C_2:=C_2(a,b,q^{(0)},\Omega)>0$ such that the following holds:

 For all $\overline{\mu} \in \overline{p}(\Omega) \cap B(0, C_2)^{\complement}$ and all $y,w \in proj_M(\Omega)$, the asymptotic 
\begin{align} \label{eqn:smooth_measure_asymps}
  \big(  \rho(P_1 - \mu_1 ) \circ \dots & \circ  \rho(P_n - \mu_n ) \circ Q_{\Omega}\big) (y,w)  = \\
   \nonumber & \big[ e^{i[|\overline{\mu}| (\nabla_{\xi}S(y;\overline{\mu})- \nabla_{\eta} S(w; \overline{\mu})) 
   \cdot \frac{\overline{\mu}}{|\overline{\mu}|}]} \\
 \nonumber & \times b(y, \nabla_{\xi}S(y; \overline{\mu});  \overline{\mu}) \times a(\nabla_{\eta}S(w; \overline{\mu}), w;  \overline{\mu})  \\
 \nonumber & \times q^{(0)} \left( \nabla_{\xi} S(y;\overline{\mu})-\nabla_{\eta} S(w; \overline{\mu}),\nabla_{\xi} S(y;\overline{\mu}),
 \nabla_{\eta} S(w; \overline{\mu}); \overline{\mu} \right) \\
 \nonumber & \times \rho((\nabla_{\xi} S)_1(y;\overline{\mu})-(\nabla_{\eta} S)_1(w; \overline{\mu})) \times \dots \times 
 \rho((\nabla_{\xi} S)_n(y;\overline{\mu})- (\nabla_{\eta} S)_n(w; \overline{\mu})) \big] \\
 \nonumber &\qquad  + \mathcal{R}(y,w;  \overline{\mu}).
\end{align}
holds, with $\mathcal{R}(y,w;  \overline{\mu}) = \mathcal{O}(|\overline{\mu}|^{-1})$ for all $|\overline{\mu}|\ge C_2$.
In particular, the expression in \eqref{eqn:smooth_measure_asymps} is $\mathcal O(1)$ uniformly in $y,w\in proj_M(\Omega)$.
\end{prop}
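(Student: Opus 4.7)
The strategy has three stages: (i) reduce to the Euclidean model propagator $e^{i\tbar \cdot \overline{P^0}}$ via Lemma \ref{l:prop_diff}; (ii) represent the conjugating FIOs $A$ and $B$ as oscillatory integrals with phases built from the generating function $S$ of $\chi^{-1}$, using Proposition \ref{p:generating_function}; and (iii) extract the leading asymptotic by multi-variable stationary phase, modeled on the proof of Lemma \ref{l:microlocalized_measure_Euclidean}.

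Starting from the Fourier representation derived in Section \ref{sect:joint_proj_wave},
\begin{equation*}
\rho(P_1 - \mu_1) \circ \cdots \circ \rho(P_n - \mu_n) \circ Q_\Omega = \frac{1}{(2\pi)^n} \int_{\R^n} e^{-i\tbar \cdot \mubar} \wh\rho(t_1) \cdots \wh\rho(t_n) \, e^{i\tbar \cdot \overline{P}} Q_\Omega \, d\tbar,
\end{equation*}
the support condition $\supp \wh\rho \subset (-\epsilon_0, \epsilon_0)$ confines $\tbar$ to $\overline{J}(\epsilon_0)$, so Lemma \ref{l:prop_diff} lets me replace $e^{i\tbar \cdot \overline{P}} Q_\Omega$ by $B \, e^{i\tbar \cdot \overline{P^0}} \, A Q_\Omega$ modulo a kernel smooth in $(\tbar, y, w)$. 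Integrating the smooth error against $e^{-i\tbar \cdot \mubar}$ yields $\mathcal{O}(|\mubar|^{-\infty})$ by non-stationary phase in $\tbar$, reducing the analysis to the sandwich $B \circ \rho(P^0_1 - \mu_1) \circ \cdots \circ \rho(P^0_n - \mu_n) \circ A Q_\Omega$.

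Next, Proposition \ref{p:generating_function} (applied to $B$ directly and, via adjunction, to $A Q_\Omega$) represents the outer FIO kernels as oscillatory integrals with phases $S(y;\eta) - x \cdot \eta$ and $z \cdot \xi - S(w;\xi)$ and amplitudes $b, a \in S^0$ respectively. Inserting Lemma \ref{l:model_prop_Lag}'s expansion of $K_{e^{i\tbar \cdot \overline{P^0}}}(x,z)$ and collecting all exponentials, the kernel becomes an oscillatory integral in $(\tbar, x, z, \eta, \xi, \zeta)$ with total phase
\begin{equation*}
\Phi = S(y;\eta) - x \cdot \eta + (x - z) \cdot \zeta + \tbar \cdot (\zeta - \mubar) + z \cdot \xi - S(w;\xi)
\end{equation*}
and amplitude $b \cdot q \cdot a \cdot \wh\rho(t_1)\cdots\wh\rho(t_n)$. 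Rescaling the frequency variables by $|\mubar|$ and applying stationary phase with large parameter $|\mubar|$, the critical equations yield $\eta = \xi = \zeta = \mubar$, $x = \nabla_\eta S(y;\mubar)$, $z = \nabla_\xi S(w;\mubar)$, and $\tbar = z - x$; the Hessian decouples into off-diagonal blocks of determinant $\pm 1$ and signature $0$ exactly as in the proof of Lemma \ref{l:microlocalized_measure_Euclidean}, so no additional oscillatory factor is produced. Evaluating the composite amplitude at the critical point gives $b(y,\nabla_\xi S(y;\mubar);\mubar) \cdot q^{(0)}(\cdots) \cdot a(\nabla_\eta S(w;\mubar), w; \mubar)$; the surviving $\wh\rho$-factors, evaluated at the critical $\tbar = \nabla_\xi S(w;\mubar) - \nabla_\eta S(y;\mubar)$ and using the evenness of $\rho$, produce the cutoff product in \eqref{eqn:smooth_measure_asymps}; and the critical phase value $\Phi|_{\mathrm{crit}} = S(y;\mubar) - S(w;\mubar)$ equals $(\nabla_\xi S(y;\mubar) - \nabla_\eta S(w;\mubar)) \cdot \mubar$ by Euler's theorem applied to the degree-$1$ homogeneous $S$.

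The main obstacle will be the multi-variable stationary phase bookkeeping: verifying nondegeneracy of the Hessian of $\Phi$ in $(\tbar, x, z, \eta, \xi, \zeta)$, which (via the canonical-graph property of $\chi$) reduces to invertibility of the mixed Hessian $\partial_y \partial_\xi S$ and is guaranteed by the fiber rank $n$ condition; and collapsing the product-structured principal symbol in \eqref{e:princ_symbol_model} into a single composite $q^{(0)}$ evaluated at the critical point. The uniform $\mathcal{O}(|\mubar|^{-1})$ remainder for $|\mubar| \ge C_2$ then follows from uniform symbol-norm control of $a, b, q^{(0)}$ over the compact closure of $\mbox{proj}_M(\Omega)$ together with standard stationary phase remainder estimates; the boundedness of the leading term in $\mubar$ is immediate from the homogeneity-zero character of $a, b, q^{(0)}$.
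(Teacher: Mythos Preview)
Your proposal is correct and follows essentially the same route as the paper's proof. The only organizational difference is that the paper splits the stationary phase into two stages---first invoking Lemma~\ref{l:microlocalized_measure_Euclidean} to collapse the $(\tbar,\zeta)$-integral into the model kernel $K_0(x,z;\mubar)$, and then applying a second stationary phase in $(x,\xi)$ and $(z,\eta)$ after inserting the generating-function representations of $B$ and $A^*$---whereas you carry out a single stationary phase in all $6n$ variables simultaneously; these are equivalent, and your Hessian indeed has determinant $\pm 1$ and signature $0$ (the $S_{\eta\eta},S_{\xi\xi}$ blocks do not affect the determinant, so the fiber-rank condition is not actually needed for nondegeneracy here---it enters earlier, in the existence of $S$ via Proposition~\ref{p:generating_function}).
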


The following corollary will be important below:
\begin{cor} \label{cor:smooth_measure_neg}
    Let $\Omega \subset T^*M$ be as  in \eqref{eqn:Omega_set} and 
    Prop. \ref{prop:smooth_spec_measure_microlocal}.  Given $N>0$, there exist $C_3,\, C_4>0$ such that 
    for all $\overline{\mu} \in \left(\overline{p}\left(\Omega\right)\right)^{\complement} \cap (B(0, C_3))^{\complement}$, 
    $\forall y,w \in proj_M(\Omega)$, 
    and 
     any $Q_{\Omega} \in \Psi^0$ with $\supp Q_{\Omega} \subset \Omega$,
    \begin{align*}
       \Big|  \big(Q_{\Omega} \circ \rho(P_1 - \mu_1 ) \circ \dots & \circ  \rho(P_n - \mu_n ) \circ Q_{\Omega}^*\big)(y,w)  
       \Big| \leq C_4 \langle \overline{\mu} \rangle^{-N}.
    \end{align*}
\end{cor}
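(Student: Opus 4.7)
The plan is to retrace the proof of \propref{prop:smooth_spec_measure_microlocal} under the hypothesis $\overline{\mu}\in\overline{p}(\Omega)^{\complement}$ and observe that the underlying oscillatory integral admits no critical point, so that integration by parts in the time variables yields arbitrary polynomial decay in $|\overline{\mu}|$.

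First I would write, via the spectral calculus and the Fourier representation from Section 3,
\[
\bigl(Q_\Omega\,\rho(P_1-\mu_1)\cdots\rho(P_n-\mu_n)\,Q_\Omega^*\bigr)(y,w) = \frac{1}{(2\pi)^n}\int_{\R^n} e^{-i\overline{t}\cdot\overline{\mu}}\prod_{j=1}^n \wh{\rho}(t_j)\,\bigl(Q_\Omega\, e^{i\overline{t}\cdot\overline{P}}\,Q_\Omega^*\bigr)(y,w)\,d\overline{t}.
\]
Since $\supp\wh{\rho}\subset(-\epsilon_0,\epsilon_0)$, the $\overline{t}$-integration is effectively over $\overline{J}(\epsilon_0)$, so \lemref{l:prop_diff} lets me replace $e^{i\overline{t}\cdot\overline{P}}Q_\Omega^*$ by $B\,e^{i\overline{t}\cdot\overline{P^0}}A\,Q_\Omega^*$ modulo a kernel jointly smooth in $(\overline{t},y,w)$. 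Integrating the smooth remainder against the compactly supported factors $\wh{\rho}(t_j)e^{-it_j\mu_j}$ yields an $O(\langle\overline{\mu}\rangle^{-N})$ tail for every $N$ by standard Fourier decay.

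Next I would combine \propref{p:generating_function} with \lemref{l:microlocalized_measure_Euclidean} to represent the main term as an oscillatory integral in $\xi\in\R^n$ with phase $\Phi(\overline{t},\xi;y,w)=S(y;\xi)-S(w;\xi)+\overline{t}\cdot\xi$ and amplitude $b(y,\cdot;\xi)\,q(\overline{t},\cdots;\xi)\,\overline{a(\cdot,w;\xi)}$. Modulo smoothing, this amplitude is supported in the conic set $\pi_\xi(\chi(\Omega))$, and the defining relation $p_j=\xi_j\circ\chi$ from \propref{p:CdV_qbnf} yields $\pi_\xi(\chi(\Omega))=\overline{p}(\Omega)$, so the $\xi$-support sits inside the conic set $\overline{p}(\Omega)$. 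Multiplying by $e^{-i\overline{t}\cdot\overline{\mu}}$ and rescaling $\xi\mapsto|\overline{\mu}|\xi$ as in the proof of \lemref{l:microlocalized_measure_Euclidean} turns the $\overline{t}$-dependent part of the phase into $|\overline{\mu}|\,\overline{t}\cdot(\xi-\hat\mu)$, where $\hat\mu:=\overline{\mu}/|\overline{\mu}|$.

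Because $\overline{p}(\Omega)$ is open and conic and $\overline{\mu}$ lies in its closed conic complement for $|\overline{\mu}|\ge C_3$, the quantity $d_0:=\operatorname{dist}\bigl(\hat\mu,\,\overline{p}(\Omega)\cap\mathbb{S}^{n-1}\bigr)$ is bounded below by a positive constant independent of $\overline{\mu}$. Hence $|\nabla_{\overline{t}}\Phi|\ge|\overline{\mu}|\,d_0$ on the $\xi$-support of the amplitude, and iterated integration by parts in $\overline{t}$ via $L=(i|\overline{\mu}|)^{-1}|\xi-\hat\mu|^{-2}(\xi-\hat\mu)\cdot\nabla_{\overline{t}}$ (legitimate since $\wh\rho$ is Schwartz and each $t_j$-integration runs over a compact interval) produces the bound $O_N(\langle\overline{\mu}\rangle^{-N})$, uniform in $y,w\in proj_M(\Omega)$. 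The main technical obstacle is arranging the conic cutoff in $\xi$ so that the rescaled integrand has effective compact $\xi$-support bounded away from the origin; a standard dyadic partition in $|\xi|$, together with the non-stationarity in $\overline{t}$ just used, handles the contributions from $|\xi|$ very small or very large modulo additional $O(\langle\overline{\mu}\rangle^{-N})$ terms.
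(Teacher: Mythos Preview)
Your argument is correct and rests on the same mechanism as the paper's---non-stationary phase exploiting the separation of $\hat\mu$ from the conic set $\overline{p}(\Omega)$---but the execution differs in a way worth noting. The paper does not revisit the oscillatory integral from scratch: it takes the asymptotic \eqref{eqn:smooth_measure_asymps} as already established, composes on the left with $Q_\Omega$, and then integrates by parts in the \emph{spatial} intermediate variable (call it $x$) arising in that composition, using that the gradient $\nabla_x\Phi_B(y,x;\xi)-\hat\mu$ of the FIO phase is nonvanishing over $\supp Q_\Omega^*$ precisely because $\hat\mu\notin\overline{p}(\Omega)$. You instead unwind the integral representation and integrate by parts in the \emph{time} variables $\overline t$, using that $\nabla_{\overline t}\Phi=\xi-\hat\mu$ is bounded away from zero on the $\xi$-support of the amplitude. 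Your route is arguably more transparent (the phase is linear in $\overline t$ and $\wh\rho$ has compact support, so no subtleties with boundary terms), while the paper's route is shorter because it recycles \propref{prop:smooth_spec_measure_microlocal} wholesale.

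One small imprecision: you define $d_0=\operatorname{dist}\bigl(\hat\mu,\overline{p}(\Omega)\cap\mathbb S^{n-1}\bigr)$ and claim it is bounded below uniformly in $\overline\mu$. As written this fails when $\hat\mu$ approaches $\partial\overline{p}(\Omega)$. What you actually need (and what your argument in fact uses) is that the $\xi$-support of the amplitude lies in the \emph{closed} conic set $\overline{p}(\supp Q_\Omega)$, which is compactly contained in $\overline{p}(\Omega)\cap\mathbb S^{n-1}$ at the spherical level; the uniform lower bound then comes from $\operatorname{dist}\bigl((\overline{p}(\Omega))^{\complement}\cap\mathbb S^{n-1},\,\overline{p}(\supp Q_\Omega)\cap\mathbb S^{n-1}\bigr)>0$, which is independent of $\overline\mu$.
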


\begin{proof}[Proof of Cor. \ref{cor:smooth_measure_neg}]
Take 
the expression in \eqref{eqn:smooth_measure_asymps} (harmlessly replacing the $Q_\Omega$ on the right by $Q_\Omega^*$), 
and then  compose it with  $Q_\Omega$ on the left.
  It suffices to show that  the Fourier transform in the right variable (which we will label $x$) is 
  $ \mathcal{O}(|\overline{\mu}|^{-N})$ for every $N>0$.
This is done by a standard integration by parts argument in $x$, 
noting that the gradient of the phase is $\nabla_x \Phi_B(y,x;\xi) - \frac{\overline{\mu}}{|\overline{\mu}|}$,
where $B$ is the FIO with canonical relation  $\mathcal{C}_B$  which is the graph of 
the canonical transformation in Prop. \ref{p:CdV_qbnf}, 
thus connecting $W$ with the image of the momentum map $\overline{p}$.  
Given the positive distance away from $\supp \Psi^* \subset W$ (since we are inside of $\overline{p}^{-1}
    (\overline{p}(W)^{\complement}) \subset W^{\complement}$ ), 
    it follows that $\nabla_x \Phi_B(y,x;\xi) - \frac{\overline{\mu}}{|\overline{\mu}|} \neq 0$ 
    over $\supp \Psi^*$.  Now  integrate by parts in $x$.
\end{proof}

\begin{proof}[Proof of Prop. \ref{prop:smooth_spec_measure_microlocal}]
Recall that we have
\begin{equation}
 \left( e^{i \overline{t} \cdot \overline{P}} - B e^{i \overline{t} \cdot \overline{P^0}} A \right) Q_{\Omega}  
 \in C^{\infty}(\mathbb{R}^n \times M \times M)
\end{equation} 
for $\overline{t} \in \overline{J}(\epsilon_0)$, thanks to Lemma \ref{l:prop_diff}.  Take $\rho \in \mathscr{S}(\mathbb{R})$ as in 
Lemma \ref{l:microlocalized_measure_Euclidean} with $\ve = \epsilon_0/(n+1)$.   
Then
\begin{align}
\nonumber & \int \, \hat{\rho}(t_1) \dots \hat{\rho}(t_n) \, e^{i \overline{t} \cdot (\overline{P} - \overline \mu)} d \overline{t} 
\circ Q_{W_{\epsilon_0/(n+1)}} \\
& - \, \, B \circ \rho(P_1^0 - \mu_1) \circ \dots \circ  \rho(P_n^0 - \mu_n) \circ A \circ Q_{W_{\epsilon_0/(n+1)}}
\end{align} 
is smoothing on $L^2(M)$ and has an $L^2 \rightarrow L^{\infty}$ operator norm which is $\mathcal{O}(|\mu|^{-N})$ 
for $|\overline{\mu}| \geq \mu^0(N, \Omega)$, thanks to its Schwartz kernel satisfying the same property.

We remind ourselves 
from Lemma \ref{l:microlocalized_measure_Euclidean} that for $|\overline{\mu}| \geq C_0$, 
we also have the following asymptotics uniformly for all $x,z \in proj_{\R^n} \, (\chi(\Omega))$:
\begin{align*}
&  \rho(P_1^0 - \mu_1) \circ \dots \circ  \rho(P_n^0 - \mu_n) \circ Q_{\chi(\Omega)}  \\
& =  e^{i[ (x-z) \cdot \overline{\mu}]} Q^{(0)}(x-z,x,z; \overline{\mu}) \hat{\rho}(x_1-z_1) \dots \hat{\rho}(x_n-z_n)  + R(x,z;  
\overline{\mu}) \\
& =: K_0(x,z;  \overline{\mu}) + R(x,z;  \overline{\mu})
\end{align*}
where 
\begin{equation*}
R(x,z; \overline{\mu}) = \mathcal{O}(|\overline{\mu}|^{-1}).
\end{equation*}
Thus, it remains to compute the asymptotics of
\begin{equation} \label{e:conjugated_micro_projector_term}
B \circ \rho(P_1^0 - \mu_1) \circ \dots \circ  \rho(P_n^0 - \mu_n) \circ A \circ Q_{\Omega}.
\end{equation}
Before proceeding further, we let $a = \sigma(A \circ Q_{\Omega})$ and $b = \sigma(Q_\Omega \circ B)$.

If we write the Schwartz kernel of the leading term for (\ref{e:conjugated_micro_projector_term}) as
\begin{align}
& \int \int K_{B}(y,x) \, K_0(x,z;  \overline{\mu}) \, K_{ A\, \circ\, Q_{\Omega}}(z,w) \, dz \, dx \\
& = \int   \int \, K_{ B}(y,x) \, \left[ \overline{K_{ (A\, \circ\, Q_{\Omega}^*}}(w,z) \, K_0(x,z;  \overline{\mu}) \, dz  \right] \, dx 
 \, \, (\text{mod } |\overline{\mu}|^{-\infty}).
\end{align}
We then invoke Prop. \ref{p:generating_function} to obtain phase functions $S(y;\xi) - x\cdot \xi$ and $S(w;\eta) - z \cdot \eta$ for 
$B$ and $A^*$, respectively. Finally, after scaling by the large parameter $|\overline{\mu}|$ we make a final application of stationary 
phase in the variables $(x;\xi)$ and $(z,\eta)$. 
From this it follows that the leading term of (\ref{e:conjugated_micro_projector_term}) equals
\begin{align*}
& \big[ e^{i[ (\nabla_{\xi}S(y;\overline{\mu})- \nabla_{\eta} S(w; \overline{\mu})) \cdot \overline{\mu}]} \\
& \times b(y, \nabla_{\xi}S(y; \overline{\mu});  \overline{\mu}) \times a(\nabla_{\eta}S(w; \overline{\mu}), w;  \overline{\mu})  \\
& \times Q^{(0)} \left( \nabla_{\xi} S(y;\overline{\mu})-\nabla_{\eta} S(w; \overline{\mu}),\nabla_{\xi} S(y;\overline{\mu}),\nabla_{\eta} S(w; \overline{\mu}); \overline{\mu} \right) \\
& \times \rho((\nabla_{\xi} S)_1(y;\overline{\mu})-(\nabla_{\eta} S)_1(w; \overline{\mu})) \times \dots \times \rho((\nabla_{\xi} S)_n(y;\overline{\mu})-(\nabla_{\eta} S)_n(w; \overline{\mu})) \big] \\
& + \mathfrak{R}(y,w;  \overline{\mu}).
\end{align*}
where $\mathfrak{R}(y,w;  \overline{\mu}) = \mathcal{O}(|\overline{\mu}|^{-1})$, 
for all sufficiently large $|\overline{\mu}|$, which we write as 
 $|\overline{\mu}| \geq C_2:=C_2(a,b,q^{(0)}, \Omega)$ for some $C_2$; 
 without loss of generality  we can  assume  $C_2\ge C_0(1, \Omega)$.

\end{proof}


\section{Tauberian Arguments and  proofs of Theorem \ref{thm:main} and Cor. \ref{cor:main}} \label{sect:tauberian} 

\subsection{On-diagonal Cluster Bounds} 
 For $\mubar \in \Gamma$ from (\ref{e:image_mm}), define the cube
 \begin{equation} \label{eqn:R_m}
 R_\mubar := \{\overline \tau\in\R^n:\, \tau_k\in [\mu_k,\mu_k+1]\, \text{ for each }k=1,2,\dotsc,n\}.
 \end{equation}
 
 In this section, we aim to show for certain $\Psi\in \Psi^0(M)$, on-diagonal upper bounds for the microlocalized unit-box projector kernel
\begin{equation}\label{spectral_box}
\Psi\Pi_{R_{\mubar}}\Psi^*(x,x)= \sum\limits_{\{j:\,\overline{\lambda_j}  \in R_{\mubar}\}}|\Psi\varphi_j(x)|^2,
\end{equation} 
follow from similar bounds on the microlocalized smoothed spectral measure restricted to the diagonal:
\[\Psi\beta(\overline P - \mubar)\Psi^*(x,x) = \sum\limits_{j=0}^\infty \beta(\overline\lambda_j -\mubar)|\Psi\varphi_j(x)|^2,\]
for an appropriate $\beta\in\mathscr S(\R^n)$. The exposition in this section proceeds in analogy to \cite[\S 3.2]{SoggeBook2014}, which 
gives a robust Tauberian argument in the single-operator case that can be used to justify similar statements from \cite{Hormander1968}. 

We choose a cutoff function $\beta\in \mathscr S(\R^n)$ such that 
\[\beta \ge 0, \quad \beta(0) = 1, \quad \wh\beta(\overline t) \equiv 0 \text{ for }|\overline t| \ge \delta,\]
for some $\delta > 0$ to be specified later. We claim that upper bounds for $\Psi\beta(\overline P - \mubar)\Psi^*$ restricted to the 
diagonal imply similar estimates for $\Psi\Pi_{R_\mubar}\Psi^*(x,x).$
\begin{lem} \label{l:smoothed_measure_rough_measure}
Suppose that given some $\Psi\in \Psi^0(M)$, there exists an open set $\tilde\omega\subseteq M$,
a cone $\Gamma' \subset \Gamma$, a compact set $K\subset \R^n$, and constants $C > 0$ 
and $\overline m\in (\R_{\geq 0})^n$ such that
\[\Psi\beta(\overline P - \mubar)\Psi^*(x,x) \le C(1+\mubar)^{\overline m} := \Pi_{k=1}^n (1+ \mu_k)^{m_k}\]
for all $x\in\tilde\omega$ and all $\,\mubar \in\Gamma' \setminus K$. Then, there exists a constant $C'>0$ so that 
\[\Psi\Pi_{R_\mubar}\Psi^*(x,x) \le C'(1+\mubar)^{\overline m}\]
for all $x\in \tilde\omega$ and all $\,\mubar\in\Gamma' \setminus K.$
\end{lem}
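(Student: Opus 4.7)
The plan is to execute a routine Tauberian passage that relies solely on the continuity and pointwise nonnegativity of $\beta$, and then to absorb the finitely many bounded shifts into the constant. The first step is to observe that, since $\beta \in \mathscr{S}(\R^n)$ is continuous and $\beta(0) = 1 > 0$, there exist $r, c_0 > 0$ such that $\beta(\overline{v}) \ge c_0$ whenever $|\overline{v}| \le r$. Because the unit cube $Q := [0,1]^n$ is compact, it can be covered by finitely many balls $\{B_r(\overline{v}_k)\}_{k=1}^N$ with centers $\overline{v}_k \in Q$, so that $|\overline{v}_k| \le \sqrt{n}$ uniformly in $k$.

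Next, for any joint eigenvalue $\overline{\lambda}_j$ the condition $\overline{\lambda}_j \in R_\mubar$ is equivalent to $\overline{\lambda}_j - \mubar \in Q$, which by the covering forces $|\overline{\lambda}_j - (\mubar + \overline{v}_k)| \le r$ for at least one $k$; at that $k$ we then have $\beta(\overline{\lambda}_j - (\mubar + \overline{v}_k)) \ge c_0$. Combined with $\beta \ge 0$, this yields the elementary scalar inequality
\[
\mathds{1}_{R_\mubar}(\overline{\lambda}_j) \le \frac{1}{c_0}\sum_{k=1}^{N}\beta\!\left(\overline{\lambda}_j - (\mubar + \overline{v}_k)\right),
\]
valid for every index $j$, regardless of whether $\overline{\lambda}_j$ lies in $R_\mubar$. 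Multiplying through by $|\Psi\varphi_j(x)|^2 \ge 0$, summing over $j$, and identifying the resulting sums through the joint spectral calculus, I obtain
\[
\Psi\Pi_{R_\mubar}\Psi^*(x,x) \le \frac{1}{c_0}\sum_{k=1}^{N}\Psi\beta(\overline{P}-(\mubar+\overline{v}_k))\Psi^*(x,x).
\]

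The final step is to invoke the hypothesis on each of the $N$ summands at the shifted spectral parameter $\mubar+\overline{v}_k$. Since $\{\overline{v}_k\}$ is a bounded finite collection and $\Gamma'$ is a cone, for $\mubar$ sufficiently far from the origin (enlarging $K$ if necessary and, if required, passing to a slightly shrunken subcone $\Gamma''\subset\Gamma'$ in case $\Gamma'$ fails to be open), each $\mubar+\overline{v}_k$ still lies in $\Gamma'\setminus K$. The hypothesis then bounds the $k$-th summand by $C(1+\mubar+\overline{v}_k)^{\overline{m}}$, and since $\overline{m}\in(\R_{\ge 0})^n$ and $|\overline{v}_k|\le\sqrt{n}$, there is a constant $C''$ with $(1+\mubar+\overline{v}_k)^{\overline{m}} \le C''(1+\mubar)^{\overline{m}}$ for all large $\mubar$. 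Collecting the $N$ terms completes the proof with $C'=NCC''/c_0$.

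I do not anticipate any substantive obstacle here; the only mild technicality is the bookkeeping needed to ensure that every shifted parameter $\mubar+\overline{v}_k$ remains within the applicability domain $\Gamma'\setminus K$ of the hypothesis, which is handled either by enlarging the compact exclusion $K$ or by mildly shrinking the cone $\Gamma'$ — neither adjustment affecting the stated conclusion.
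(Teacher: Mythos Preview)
Your argument is correct and follows essentially the same route as the paper's proof: both use continuity of $\beta$ at the origin to find a positive lower bound on a small neighborhood, cover the unit cube by finitely many translates of that neighborhood, and then invoke the hypothesis at each of the finitely many shifted spectral parameters, absorbing the bounded shifts into the constant. The paper phrases the covering in terms of small boxes $R_{\mubar_k,\ve_0}$ rather than balls, but this is cosmetic; your version is arguably a bit cleaner in that you bound the indicator $\mathds{1}_{R_\mubar}$ directly by a sum of shifted $\beta$'s rather than passing through intermediate box projectors, and you flag more explicitly the bookkeeping issue of keeping $\mubar+\overline{v}_k$ inside $\Gamma'\setminus K$, which the paper simply glosses over.
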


\begin{proof}
Since $\beta(0)=1$, there exists an $\ve_0 > 0$ such that $\beta(\overline \tau) \ge \frac{1}{2}$ whenever $|\overline \tau|\le \ve_0.$ 
If we define the $\epsilon_0$ neighborhood of $R_{\overline{\mu}}$ in the box metric,
\[R_{\mubar,\ve_0} := [\mu_1-\ve_0,\mu_1 + \ve_0]\times \dotsm\times [\mu_n-\ve_0,\mu_n + \ve_0],\]
then for $\mubar\in\Gamma'\setminus K$ and any $x\in\omega$,
\begin{align*}
\sum\limits_{\{j:\,\overline\lambda_j\in R_{{\mubar},\ve_0}\}} |\Psi\varphi_j(x)|^2 &\le \sum\limits_{\{j:\,\overline\lambda_j\in R_{\mubar,\ve_0}\}}2\beta(\overline \lambda_j - \mubar)|\Psi\varphi_j(x)|^2\\
& \le \sum\limits_{j=0}^\infty 2\beta(\overline \lambda_j - \mubar)|\Psi\varphi_j(x)|^2\\
& = 2\Psi\beta(\overline P - \mubar)\Psi^*(x,x)\\
& \le 2C(1 + \mubar)^{\overline m},
\end{align*}
where the final inequality follows by our hypothesis on $\Psi\beta(\overline P-\mubar)\Psi^*$. Next, note that there exists an $N\in\N$ 
such that for any $\mubar\in\Gamma'\setminus K,$ there exists a finite collection of points $\{\mubar_k\}_{k=1}^N$ such that 
$R_\mubar\subseteq \bigcup\limits_{k=1}^N R_{\mubar_k,\ve_0}$, with the property that $|\mubar-\mubar_k|\le 1$ for all $k$. Then, by 
nonnegativity and our previous calculations, we have that for $\mubar\in\Gamma'\setminus K$ and $x\in\omega$,
\begin{align*}
\Psi\Pi_{R_\mubar}\Psi^*(x,x) & \le \sum\limits_{k=1}^N \Psi\Pi_{R_{\mubar_k,\ve_0}}\Psi^*(x,x)\\
& \le \sum\limits_{k=1}^N 2C(1+\mubar_k)^\mbar\\
& \le C' (1+\mubar)^\mbar
\end{align*}
for some $C' > 0$, which completes the proof. Note that in the above argument it is critical that $C$ and $N$ be independent of both $\mubar,\mubar_k$ and $\ve_0.$
\end{proof}
\bigskip

\subsection{Tauberian Lemma}
The goal of this section is to show that the asymptotics of the rough projector $\Psi\Pi_{\lambda \cbar}\Psi^*(x,y)$ are the same as those of the smoothed projector 
$$\left(\rho\otimes\dotsm  \otimes \rho\right) \ast \Pi_{\lambda, \cbar}\Psi\psi^*(x,y).$$ 
To be more precise, we have the following proposition.  

\begin{prop}\label{taub_prop}
Let $\Lambda$ be the joint spectrum (\ref{eqn:joint_spectrum}) of our QCI system.  Let $(M,g)$ be as in Thm. \ref{thm:main}, 
and $\omega := proj_M(\Omega) \subset M$, with $\Omega$ as in the statement 
of Prop. \ref{prop:smooth_spec_measure_microlocal}. 
Suppose there exists $\Psi\in \Psi^0(M)$, a compact set $K\subset\R^n$, an $\overline m\in\R^n$, and a $C> 0$ 
such that for all $x\in \omega$ and all $\overline \mu \in \overline{p}(W) \setminus K$,
\begin{equation} \label{eqn:on_diag_assump}
\Psi\Pi_{R_{\overline\mu}}\Psi^*(x,x) \le C(1 +\overline \mu)^{\overline m}
\end{equation}
where $R_{\overline{\mu}}$ is as in (\ref{eqn:R_m}). Then, there exists an $\ve_0>0$ such that for any $\rho\in\mathscr S(\R)$ 
with $\supp\wh\rho\subset[-\ve_0,\ve_0]$ and $\wh\rho(s) = 1$ for all $|s|\le \frac{\ve_0}{2}$, there exists a $C_\rho > 0$ 
such that for all $\lambda > 0$ with $\lambda \overline{c} \in (\overline{p}(W) \setminus K) \cap \Lambda^{\complement}$ 
(where we fix $\cbar \in \overline{p}(W) \cap \mathbb S^{n-1}$ having only nonzero components), we have
\[\sup\limits_{x,y\in\omega}\left|\Psi\Pi_{\lambda,\cbar}\Psi^*(x,y) - \left( \rho\otimes\dotsm\otimes\rho \right) 
\ast\Psi\Pi_{\lambda,\cbar}\Psi^*(x,y)\right|  \le C_{\rho'}(1 + \lambda)^{|\overline m| + n - 1}\]
\end{prop}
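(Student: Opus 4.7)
The plan is to adapt Sogge's one-dimensional Tauberian argument \cite[\S3.2]{SoggeBook2017} to the present product setting.  Writing $\rho_\otimes := \rho\otimes\cdots\otimes\rho$ and
\[
\nu_\lambda(\mubar) := \mathds 1_{I(\lambda,\cbar)}(\mubar) - \bigl(\rho_\otimes * \mathds 1_{I(\lambda,\cbar)}\bigr)(\mubar),
\]
the spectral theorem applied to both projectors gives the representation
\[
\Psi\Pi_{\lambda,\cbar}\Psi^*(x,y) - \rho_\otimes * \Psi\Pi_{\lambda,\cbar}\Psi^*(x,y) = \sum_j \nu_\lambda(\overline{\lambda}_j)\,\Psi\varphi_j(x)\overline{\Psi\varphi_j(y)},
\]
so the task reduces to bounding this weighted bilinear spectral sum uniformly for $x,y\in\omega$.

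The next step is to extract pointwise decay for $\nu_\lambda$ by exploiting the product structure.  Setting $f_k(\mu_k) = \mathds 1_{[-|c_k|\lambda,|c_k|\lambda]}(\mu_k)$ and $g_k := \rho * f_k$, the telescoping identity
\[
\prod_k f_k - \prod_k g_k \;=\; \sum_{k=1}^n g_1\cdots g_{k-1}(f_k - g_k)\,f_{k+1}\cdots f_n
\]
reduces $\nu_\lambda$ to a sum of tensor products in each of which exactly one factor is the one-dimensional difference $f_k - g_k$.  A standard Fourier computation (using that $\wh\rho\equiv 1$ near $0$, so the Fourier transform $(1-\wh\rho(t))/(it)$ is Schwartz) yields
\[
|f_k - g_k|(\mu_k) \;\le\; C_N\bigl(\langle \mu_k - |c_k|\lambda\rangle^{-N} + \langle \mu_k + |c_k|\lambda\rangle^{-N}\bigr), \quad \forall N,
\]
while the remaining factors are uniformly bounded.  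Hence $\nu_\lambda$ is $O(1)$ on a unit neighborhood of $\partial I(\lambda,\cbar)$ and $O_N(\mathrm{dist}(\mubar,\partial I(\lambda,\cbar))^{-N})$ off of it.

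I would then partition $\R^n$ into unit cubes $R_{\overline k}$, $\overline k\in \Z^n$, set $M_{\overline k,\lambda} := \sup_{\mubar\in R_{\overline k}} |\nu_\lambda(\mubar)|$, and estimate
\[
\Bigl|\sum_j \nu_\lambda(\overline{\lambda}_j)\,\Psi\varphi_j(x)\overline{\Psi\varphi_j(y)}\Bigr| \;\le\; \sum_{\overline k\in\Z^n} M_{\overline k,\lambda}\!\sum_{j:\,\overline{\lambda}_j\in R_{\overline k}}\!|\Psi\varphi_j(x)|\,|\Psi\varphi_j(y)|.
\]
Cauchy--Schwarz on the inner sum reduces it to $\bigl[\Psi\Pi_{R_{\overline k}}\Psi^*(x,x)\bigr]^{1/2}\bigl[\Psi\Pi_{R_{\overline k}}\Psi^*(y,y)\bigr]^{1/2}$, which by the on-diagonal hypothesis \eqref{eqn:on_diag_assump} is bounded by $C(1+\overline k)^{\mbar}$ whenever $\overline k\in \overline p(W)\setminus K$.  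The finitely many cubes in $K$ contribute a bounded constant; for $\overline k\notin \overline p(W)$, the microlocal support of $\Psi$ together with (an argument in the spirit of) Corollary \ref{cor:smooth_measure_neg} yields rapid decay for this inner sum.

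It remains to evaluate $\sum_{\overline k}M_{\overline k,\lambda}(1+\overline k)^{\mbar}$.  The boundary $\partial I(\lambda,\cbar)$ meets $O(\lambda^{n-1})$ unit cubes, each of which carries $M_{\overline k,\lambda} = O(1)$ and weight $(1+\overline k)^{\mbar} \lesssim \lambda^{|\mbar|}$, yielding the dominant contribution of order $\lambda^{|\mbar|+n-1}$; the rapid decay of $M_{\overline k,\lambda}$ off the boundary slab makes the tail summable with a $\lambda$-independent bound.  This gives the claimed estimate.  The only substantive obstacle is bookkeeping: one must verify that after telescoping, the decay of $\nu_\lambda$ is genuinely transverse to each face of $\partial I(\lambda,\cbar)$ simultaneously, so that the boundary-slab contribution is correctly identified as $O(\lambda^{n-1})$ cubes and the combined tail from all $n$ telescoping terms remains controlled.
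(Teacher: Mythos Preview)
Your proposal is correct and follows essentially the same route as the paper: the same telescoping identity (the paper's Lemma~\ref{lem:products}), the same one-dimensional bound \eqref{h_simplebound}, and the same reduction to the on-diagonal cube hypothesis. The only structural differences are that the paper applies Cauchy--Schwarz globally at the outset, reducing to a single sum $\sum_j |h_\lambda(\overline\lambda_j)|\,|\Psi\varphi_j(x)|^2$, and then estimates the cube sum term-by-term in the telescoping expansion rather than through a global $M_{\overline k,\lambda}=\sup_{R_{\overline k}}|\nu_\lambda|$; this organization dissolves your final ``bookkeeping obstacle,'' since each telescoping term carries exactly one distinguished index with transverse decay while the remaining factors are either indicators or uniformly bounded.
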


\begin{rmk} 
    It is always possible to find a conic neighborhood $\Omega$ such that every $\lambda \overline{c} \in \overline{p}(\Omega)$ 
    has only non-zero components: The momentum map is a submersion into $\R^n$ over 
    $\mathscr{O} \subset U \cap \Omega_0$, say as in Prop. \ref{prop:smooth_spec_measure_microlocal}.
    Thus, its image contains
    open subsets not intersecting any of  the coordinate planes of $\mathbb{R}^n$,  and we may take as $\Omega$
    the  inverse image of any such.
\end{rmk}

\begin{cor}\label{cor:7.4}
    Assume the hypotheses of Prop. \ref{taub_prop}.  Set $\Psi = \Op \left( \pi_{\Omega} \right)$.  
    Then for all $\lambda \overline{c} \in \overline{p}(\Omega) \setminus K$ (with $\cbar \in \overline{p}(\Omega) \cap \mathbb S^{n-1}$ 
    having only nonzero components), we have
    \[
        \left( \rho\otimes\dotsm\otimes\rho\right) \ast\Psi\Pi_{\lambda,\cbar}\Psi^*  = \int_{\overline{p}(W) \cap I(\lambda, \overline{c})} 
        \Psi \circ \rho(P_1 - \mu_1 ) \circ \dots \circ  \rho(P_n - \mu_n ) \circ \Psi^* \, d\overline{\mu} \, +\,  \mathcal{O}(1) 
        \]
    for all $x,y \in \omega$.
\end{cor}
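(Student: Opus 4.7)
The plan is to unfold the left-hand side convolution as an integral of the smoothed spectral measure over the full cube $I(\lambda,\overline c)$, and then apply Corollary \ref{cor:smooth_measure_neg} to show that the contribution coming from outside $\overline p(\Omega)$ is only $\mathcal O(1)$. To carry out the first step, I would start from the expansion $\Psi\Pi_{\lambda,\overline c}\Psi^*(x,y)=\sum_j \mathds 1_{I(\lambda,\overline c)}(\overline\lambda_j)\,\Psi\varphi_j(x)\overline{\Psi\varphi_j(y)}$ and use Fubini (justified by the Schwartz decay of $\rho$) together with the evenness of $\rho$ to verify the identity
\[
(\rho\otimes\cdots\otimes\rho)*\Psi\Pi_{\lambda,\overline c}\Psi^*(x,y) \,=\, \int_{I(\lambda,\overline c)} \bigl(\Psi\circ\rho(P_1-\mu_1)\circ\cdots\circ\rho(P_n-\mu_n)\circ\Psi^*\bigr)(x,y)\,d\overline\mu.
\]
Subtracting the main term of the corollary, the claim reduces to showing that the tail integral
\[
D(x,y,\lambda) \,:=\, \int_{I(\lambda,\overline c)\setminus\overline p(\Omega)} \bigl(\Psi\circ\rho(P_1-\mu_1)\circ\cdots\circ\rho(P_n-\mu_n)\circ\Psi^*\bigr)(x,y)\,d\overline\mu
\]
is $\mathcal O(1)$ uniformly in $x,y\in\omega$ as $\lambda\to\infty$.

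To bound $D(x,y,\lambda)$, I would apply Corollary \ref{cor:smooth_measure_neg}, noting that $\Psi=\Op(\pi_\Omega)$ has microlocal support contained in $\Omega$ and so plays the role of $Q_\Omega$ there. For each $N>0$ this supplies constants $C_3,\,C_4>0$ such that for all $\overline\mu\in(\overline p(\Omega))^{\complement}$ with $|\overline\mu|\geq C_3$ and all $x,y\in\omega$,
\[
\bigl|(\Psi\circ\rho(P_1-\mu_1)\circ\cdots\circ\rho(P_n-\mu_n)\circ\Psi^*)(x,y)\bigr| \,\leq\, C_4\,\langle\overline\mu\rangle^{-N}.
\]
I would then split the tail integral into its portions over $B(0,C_3)^{\complement}$ and $B(0,C_3)$. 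Taking $N>n$, the outer piece is bounded by $\int_{|\overline\mu|\geq C_3} C_4\,\langle\overline\mu\rangle^{-N}\,d\overline\mu<\infty$, which is independent of $\lambda$; the inner piece is an integral over the fixed bounded region $B(0,C_3)\setminus\overline p(\Omega)$ of a kernel which is jointly continuous in $(\overline\mu,x,y)$ on the compact set $B(0,C_3)\times\overline\omega\times\overline\omega$, and hence contributes at most a $\lambda$-independent constant. Combining the two pieces gives the desired $\mathcal O(1)$ bound on $D$.

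The most delicate step is the unfolding identity in the first paragraph: once the notational conventions for $(\rho\otimes\cdots\otimes\rho)*\Psi\Pi_{\lambda,\overline c}\Psi^*$ are aligned with the representation of $\Pi_{\lambda,\overline c}$ as the integral of the joint spectral measure over $I(\lambda,\overline c)$, the remaining estimates are immediate consequences of Corollary \ref{cor:smooth_measure_neg}. I do not foresee any substantial analytic obstacle beyond the bookkeeping required to verify that $\Psi=\Op(\pi_\Omega)$ (and its adjoint) satisfies the microlocal support hypothesis imposed on the generic operator $Q_\Omega$ in the statement of that corollary.
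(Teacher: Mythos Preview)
Your proposal is correct and follows essentially the same approach as the paper: unfold the convolution via the functional calculus as an integral over $I(\lambda,\overline c)$, split off the portion over $\overline p(\Omega)^{\complement}$, and invoke Corollary~\ref{cor:smooth_measure_neg} to show that piece is $\mathcal O(1)$. If anything, you are slightly more careful than the paper in explicitly handling the bounded region $B(0,C_3)$ separately.
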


\begin{proof}
    This is a simple application of the functional calculus for commuting families of self-adjoint operators followed 
    by Prop. \ref{prop:smooth_spec_measure_microlocal}.  More specifically, we have
    \begin{align*}
        & \rho\otimes\dotsm\otimes\rho\ast\Psi\Pi_{\lambda,\cbar}\Psi^*  = \int_{I(\lambda \overline{c})  \cap \left( \overline{p}(\Omega) \cup 
        \overline{p}(\Omega)^{\complement} \right)} \Psi \circ \rho(P_1 - \mu_1 ) \circ \dots \circ  \rho(P_n - \mu_n ) \circ \Psi^* \, 
        d\overline{\mu} \\
        & = \int_{\overline{p}(W) \cap \left( I(\lambda \overline{c}) \right) } \Psi \circ \rho(P_1 - \mu_1 ) \circ \dots \circ  \rho(P_n - \mu_n ) 
        \circ \Psi^* \, d\overline{\mu} \, \, + \\
        & \, \, \, \int_{\overline{p}(W)^{\complement} \cap \left( I(\lambda \overline{c}) \right)} \Psi \circ \rho(P_1 - \mu_1 ) \circ \dots \circ  
        \rho(P_n - \mu_n ) \circ \Psi^* \, d\overline{\mu} \, \, \\
        &  = \int_{\overline{p}(W) \cap I(\lambda \overline{c})} \Psi \circ \rho(P_1 - \mu_1 ) \circ \dots \circ  \rho(P_n - \mu_n ) \circ \Psi^* \, 
        d\overline{\mu}  + \mathcal{O}(1)
    \end{align*}
    We arrived at the last line by applying Cor. \ref{cor:smooth_measure_neg} to the integrand of the integral in the penultimate line and 
    estimated the resulting quantity.
\end{proof}

We recall that $\Psi\Pi_{\lambda, \overline{c}}\Psi^*(x,y)$ can be expressed as
\[\Pi_{\lambda,\cbar}(x,y) = \sum\limits_{j=0}^\infty\prod\limits_{k=1}^n \mathds 1_{[-\lambda ,\lambda ]}( \lambda_j^{(1)}/|c_1|)
\dotsm\mathds 1_{[-\lambda,\lambda]}(\lambda_j^{(n)}/|c_n|)\Psi\varphi_j(x)\overline{\Psi\varphi_j(y)},\]
and thus by the Fourier inversion formula, the smoothed projector can be written as
\[\left( \rho\otimes\dotsm\otimes\rho \right) \ast \Psi\Pi_{\lambda,\cbar}\Psi^*(x,y) = \sum\limits_{j=0}^\infty\frac{1}{\pi^n}\int\limits_{\R^n}
e^{i\tbar\cdot \lambdabar_j}\wh\rho(t_1)\dotsm\wh\rho(t_n)\frac{\sin(\lambda t_1c_1)}{t_1}\dotsm\frac{\sin(\lambda t_n c_n)}{t_n}
\Psi\varphi_j(x)\overline{\Psi\varphi_j(y)}\,d\tbar.\]
Since the integral in $\tbar$ is separable, we are led to define the function
\begin{equation} \label{eqn:h_lambda}
h_{\lambda}(\mubar):= \prod\limits_{k=1}^n\mathds 1_{[-\lambda,\lambda]}(\mu_k/c_k) 
- \prod\limits_{k=1}^n\frac{1}{\pi}\int\limits_{-\infty}^\infty e^{it_k\mu_k}\wh\rho(t_k)\frac{\sin (\lambda t_kc_k)}{t_k}\,dt_k.
\end{equation}
We claim that a sufficiently good estimates on the values $h_{\lambda}(\lambdabar_j)$ implies the desired bound on 
\[\big|\Psi\Pi_{\lambda,\cbar}\Psi^*(x,y) - \left(\rho\otimes\dotsm\otimes\rho\right)\ast\Psi\Pi_{\lambda,\cbar}\Psi^*(x,y)\big|,\] 
and the proof of this estimate will comprise the bulk of the proof of Prop. \ref{taub_prop}.
In order to bound  $h_{\lambda}$, the following elementary fact about differences of products is useful. 

\begin{lem}\label{lem:products}
Suppose that $a_1,\dotsc,a_n$ and $a_1',\dotsc,a_n'$ are complex numbers. Then, 
\[\left|\prod\limits_{k=1}^n a_k - \prod\limits_{k=1}^n a_k'\right| \le \sum\limits_{\ell=1}^n |a_1|\dotsm 
|a_{\ell-1}|\,|a_\ell-a_\ell'|\, |a_{\ell+1}'|\dotsm |a_n'|.\]
\end{lem}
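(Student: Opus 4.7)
The plan is to prove this by a telescoping identity followed by the triangle inequality; this is the standard ``hybrid argument'' for controlling the difference of two products. No induction is actually needed, although one could equivalently organize the argument by induction on $n$.

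First I would introduce the hybrid products $H_\ell := \left(\prod_{k=1}^{\ell-1} a_k\right)\left(\prod_{k=\ell}^{n} a_k'\right)$ for $\ell = 1, \dots, n+1$, with the conventions that empty products equal $1$. Then $H_1 = \prod_{k=1}^n a_k'$ and $H_{n+1} = \prod_{k=1}^n a_k$, so that
\begin{equation*}
\prod_{k=1}^n a_k - \prod_{k=1}^n a_k' \;=\; H_{n+1} - H_1 \;=\; \sum_{\ell=1}^n \bigl(H_{\ell+1} - H_\ell\bigr).
\end{equation*}
Each consecutive difference telescopes cleanly, since $H_{\ell+1}$ and $H_\ell$ agree in all factors except the $\ell$-th, giving
\begin{equation*}
H_{\ell+1} - H_\ell \;=\; \left(\prod_{k=1}^{\ell-1} a_k\right)\,(a_\ell - a_\ell')\,\left(\prod_{k=\ell+1}^{n} a_k'\right).
\end{equation*}

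Then I would apply the triangle inequality term-by-term together with the submultiplicativity of the absolute value on $\mathbb{C}$ to obtain the claimed bound
\begin{equation*}
\left|\prod_{k=1}^n a_k - \prod_{k=1}^n a_k'\right| \;\le\; \sum_{\ell=1}^n |a_1|\dotsm|a_{\ell-1}|\,|a_\ell - a_\ell'|\,|a_{\ell+1}'|\dotsm|a_n'|,
\end{equation*}
matching the statement exactly (including the convention that the $\ell=1$ term has no $|a_k|$ factors and the $\ell=n$ term has no $|a_k'|$ factors).

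Since every step is an identity or the triangle inequality, there is no real obstacle; the only thing that requires a moment of care is bookkeeping the index conventions so that the hybrids $H_\ell$ produce precisely the prefix of the unprimed $a_k$'s and the suffix of the primed $a_k'$'s that appear in the stated bound. Given how short and standard this is, I would write it as a single displayed telescoping identity followed by one invocation of the triangle inequality.
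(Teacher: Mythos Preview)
Your proof is correct and is essentially the same as the paper's: both establish the telescoping identity $\prod_k a_k - \prod_k a_k' = \sum_{\ell} a_1\cdots a_{\ell-1}(a_\ell-a_\ell')a_{\ell+1}'\cdots a_n'$ and then apply the triangle inequality. The only cosmetic difference is that the paper phrases the telescoping step as an induction on $n$, whereas you introduce the hybrid products $H_\ell$ explicitly; your version is arguably cleaner bookkeeping for exactly the same argument.
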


\begin{proof}
Note that we can write 
\begin{align*}
a_1\dotsm a_n - a_1'\dotsm a_n' &= a_1\dotsm a_n - a_1'a_2\dotsm a_n + a_1'a_2\dotsm a_n -a_1'\dotsm a_n'\\
& = (a_1-a_1')a_2\dotsm a_n + a_1'(a_2\dotsm a_n - a_2'\dotsm a_n').
\end{align*}
Proceeding inductively, we obtain
\[a_1\dotsm a_n - a_1'\dotsm a_n' = \sum\limits_{\ell=1}^n a_1\dotsm a_{\ell-1}(a_\ell-a_\ell')a_{\ell+1}'\dotsm a_n',\]
and an application of the triangle inequality completes the proof. 
\end{proof}

We are now in position to conclude this section with the proof of Prop. \ref{taub_prop},
making use of the convention that, for a group $*$  of one or more indices, 
$C_*$ will denote a constant that is allowed to depend on $*$ but not on other parameters,
and may vary from line to line.

\begin{proof}
Observe that by Cauchy-Schwarz, we have
\begin{align}
\begin{split}\label{h_cauchyschwarz}
&\left|\Psi\Pi_{\lambda,\cbar}\Psi^*(x,y) - \rho\otimes\dotsm\otimes\rho\ast\Psi\Pi_{\lambda,\cbar}\Psi^*(x,y)\right|  \\
& \hskip 1in \le \lp\sum\limits_{j=0}^\infty \left|h_{\lambda}(\overline\lambda_j)\right|\,|\Psi\varphi_j(x)|^2\rp^{\frac{1}{2}}\lp
\sum\limits_{j=0}^\infty \left| h_{\lambda}(\overline\lambda_j)\right| |\Psi\varphi_j(y)|^2\rp^{\frac{1}{2}}.
\end{split}
\end{align}

Before we apply \lemref{lem:products} to $h_{\lambda}$ (\ref{eqn:h_lambda}), we claim that 
\begin{equation}\label{h_simplebound}
\left|\mathds 1_{[-\lambda,\lambda]}(\tau) - \frac{1}{(2\pi)^2}\int\limits_{-\infty}^\infty \wh\rho(t)e^{it\tau}\frac{\sin(\lambda t)}{t}\,dt\right| 
\le C_N (1 + \big||\tau|-\lambda\big|)^{-N}
\end{equation}
for any $N\in\N.$ To see this, we recall that $\frac{\sin(\lambda t)}{t}$ is the Fourier transform of $\mathds 1_{[-\lambda,\lambda]}(\cdot)$, 
and hence by properties of the Fourier transform, 
\[\frac{1}{2\pi}\int\limits_{-\infty}^\infty e^{it\tau}\wh\rho(t)\frac{\sin(\lambda t)}{t}\,dt = \int\limits_{-\infty}^\infty \rho(\tau - \tau')\mathds 1_{[-
\lambda,\lambda]}(\tau')\,d\tau' = \int\limits_{-\lambda}^\lambda \rho(\tau-\tau')\,d\tau' = \int\limits_{\tau-\lambda}^{\tau+\lambda}\rho(\tau')
\,d\tau'.\] 
We now recall our discussion from Sec. \ref{sect:joint_proj_wave}.  When $|\tau| > \lambda,$ we use that 
$\mathds 1_{[-\lambda,\lambda]}(\tau) = 0$ and the fact that $\rho$ is Schwartz to obtain the desired bound. 
Similarly, when $|\tau| < \lambda$, we use that $\mathds 1_{[-\lambda,\lambda]}(\tau) = 1$, combined with the 
Schwartz properties of $\rho$ and the fact that $\int_{-\infty}^\infty\rho(\tau')\,d\tau' = 1$. 
When $|\tau| = \lambda$, \eqref{h_simplebound} merely claims that the difference is uniformly bounded in $\lambda$, 
which is clear by inspection. 

Now, for $1\le k\le n$, set
\[a_k = \mathds 1_{[-\lambda,\lambda]}(\mu_k/c_k) \quad \text{and}\quad a_k' = \frac{1}{\pi}\int\limits_{-\infty}^\infty 
\wh\rho(t_k)e^{it_k\mu_k}\frac{\sin(\lambda t_k c_k)}{t_k}\,dt_k.\]
Then, by \eqref{h_simplebound}, if we choose any $N\in \N$, there will exist constants $C_{N,k} > 0$ such that
$|a_k-a_k'|\le C_{N,k}(1+\big||\mu_k|-\lambda|c_k|\big|)^{-N}$; letting $C_N=\max_{1\le k\le n}C_{N,k}$, we thus have
$$|a_k-a_k'|\le C_{N}(1+\big||\mu_k|-\lambda|c_k|\big|)^{-N},\quad 1\le k\le n.$$ 
Applying \lemref{lem:products} to $h_\lambda(\mubar)$ from (\ref{eqn:h_lambda}), one sees  that 
\begin{equation}\label{e:h_a'}
\left|h_\lambda(\mubar)\right| \le \sum\limits_{l = 1}^n |a_1|\,\dotsm\,|a_{\ell-1}|\, \left( C_{N}(1+\big||\mu_l|-\lambda|c_l|\big|)^{-N}\right) \, 
|a_{\ell+1}'|\,\dotsm\,|a_n'|.
\end{equation}
Note that we can also write 
\[a_k' = a_k + \mathcal O\lp(1 + \big||\mu_k|-\lambda|c_k|\big|)^{-N}\rp.\]
Applying this to each factor of $a_k'$ in \eqref{e:h_a'}, we obtain an upper bound on $h_\lambda(\mubar)$ which consists of a finite 
linear combination of terms, each of which is a product of the form 
\begin{equation}\label{e:h_terms}
\prod\limits_{\ell\in\mathscr N_1}\mathds 1_{[-\lambda,\lambda]}(\mu_\ell/c_\ell)
\prod\limits_{k\in\mathscr N_2}(1+\big||\mu_k|-\lambda|c_k|\big|)^{-N},
\end{equation}
where $\mathscr N_1,\mathscr N_2$ form a partition of the indices $\{1,\dotsc,n\}.$ 

We want to use \eqref{e:h_terms} to estimate $h_\lambda(\lambdabar_j)$, so set $\mubar = \lambdabar_j$,
i.e., $\mu_k=\lambda_j^{(k)},\, 1\le k\le n$, and  then fix a $k\in \{1,\dotsc,n\}$.
We consider first the case where $\mathscr N_2$ is a singleton, which we claim is the worst  case.

Observe  that there exist constants  $C_{N,k}> 0$ (which can vary from line to line)  such that 
\begin{align*}
&\sum\limits_{j=0}^\infty \prod\limits_{\substack{\ell=1\\ 
\ell\ne k}}^n C_{N,k}\mathds 1_{[-\lambda,\lambda]}(\lambda_j^{(\ell)}/c_\ell)
(1+\big||\lambda_j^{(k)}|-\lambda|c_k|\big|)^{-N}|\Psi\varphi_j(x)|^2 \\
& \hskip 0.5in \le C_{N,k} \sum\limits_{\overline{s} \in\Z^n}\sum\limits_{\lambdabar_j\in R_{\overline{s}}} \prod\limits_{\substack{\ell=1\\ 
\ell\ne k}}^n \mathds 1_{[-\lambda,\lambda]}(\lambda_j^{(\ell)}/c_\ell)(1+\big||\lambda_j^{(k)}|-\lambda|c_k|\big|)^{-N}|\Psi\varphi_j(x)|^2\\
& \hskip 0.5in \le C_{N,k} \sum\limits_{s_1=-\floor{\lambda|c_1|}}^{\floor{\lambda |c_1|}}\underbrace{\dotsm}_{\text{skip} s_k}
\sum\limits_{s_n=-\floor{\lambda|c_n|}}^{\floor{\lambda|c_n|}} \sum\limits_{s_k\in\Z}  \sum\limits_{\lambdabar_j\in R_{\overline{s}}} (1+
\big||\lambda_j^{(k)}|-\lambda|c_k|\big|)^{-N}|\Psi\varphi_j(x)|^2 \\
& \hskip 0.5in \le C_{N,k} \sum\limits_{s_1=-\floor{\lambda|c_1|}}^{\floor{\lambda |c_1|}}\underbrace{\dotsm}_{\text{skip} s_k}
\sum\limits_{s_n=-\floor{\lambda|c_n|}}^{\floor{\lambda|c_n|}} \sum\limits_{s_k\in\Z} (1 + \big||s_k| - \lambda|c_k|\big|)^{-N}
\sum\limits_{\lambdabar_j\in R_{\overline{s}}}|\Psi\varphi_j(x)|^2
\\
& \hskip 0.5in = C_{N,k} \sum\limits_{s_1=-\floor{\lambda|c_1|}}^{\floor{\lambda |c_1|}}\underbrace{\dotsm}_{\text{skip} s_k}
\sum\limits_{s_n=-\floor{\lambda|c_n|}}^{\floor{\lambda|c_n|}} \sum\limits_{s_k\in\Z} (1 + \big||s_k| - \lambda|c_k|\big|)^{-N} 
\Psi\Pi_{R_{\overline{s}}}\Psi^*(x,x)
\end{align*}

Take $N\ge 2$ so that the above summation over $s_k\in\Z$ is finite (again, for a fixed $k$). 
Recalling the definition of $R_{\overline{\mu}}$ in (\ref{eqn:R_m}), 
we can combine the bound 
(\ref{eqn:on_diag_assump}) on 
$\Psi\Pi_{R_\mubar}\Psi(x,x)$  with  the above calculations to obtain 
\[\sum\limits_{j=0}^\infty \prod\limits_{\substack{\ell=1\\ \ell\ne k}}^n C_{N,k}\mathds 1_{[-\lambda,\lambda]}(\lambda_j^{(\ell)}/c_\ell)
(1+\big||\lambda_j^{(k)}|-\lambda|c_k|\big|)^{-N}|\Psi\varphi_j(x)|^2 \le \wt C_k \lambda^{|\overline{m}| + n-1}\]
for some $\wt C_k > 0$.  Now, notice that if we repeat this argument for the remaining terms in the upper bound for $h_\lambda$, each 
additional factor of $(1+\big||\lambda_j^{(k)}| - \lambda|c_k|\big|)^{-N}$ would yield one lower power of $\lambda$ in the final estimate. 
Therefore, since $\max_{1\le k \le n} \wt C_k <\infty$, we have that 
\[\sum\limits_{j=0}^\infty |h_\lambda(\lambdabar_j)||\Psi\varphi_j(x)|^2 \le C \lambda^{|\overline{m}|+n-1}\]
for some $C.$ 

Now, if $\mathcal N_2=\{k_1,\dots,k_p\}$ has more than one element, 
then the $\prod\limits_{\substack{\ell=1\\ \ell\ne k}}^n$ above becomes
$\prod\limits_{\substack{\ell=1\\ \ell\ne k_1,\dots,k_p}}^n$.
We can then let $k$ be any one of $k_1,\dots,k_p$ and apply the argument above, with the  factors coming from the other $k_i$
being bounded and thus harmless, resulting in a contribution dominated by

Thus, by \eqref{h_cauchyschwarz}, we have that 
\[\big|\Psi\Pi_{\lambda\cbar}\Psi^*(x,y) - \left(\rho\otimes\dotsm\otimes\rho\right)\ast\Psi\Pi_{\lambda\cbar}\Psi^*(x,y)\big| 
\le C \lambda^{|\overline{m}|+n-1}.\]
which completes the proof of Prop. \ref{taub_prop}.
\end{proof}


\subsection{Final steps of the proof of Theorem \ref{thm:main}} \label{sect:final_steps}

To conclude the argument that leads to Thm. \ref{thm:main}, we want to show that the hypotheses of 
Prop. \ref{taub_prop} are satisfied.  

We start by recalling  (\ref{eqn:Omega_set}), and and set $\Psi = Q_{\Omega}$
in Prop. \ref{prop:smooth_spec_measure_microlocal}.  Choose $\overline{c} \in \mathbb S^{n-1}$ as in the hypotheses of 
Prop. \ref{taub_prop}, 
and note that the asymptotics for $\left( \rho\otimes\dotsm\otimes\rho \right) \ast\Psi\Pi_{\lambda\cbar}\Psi^*(x,y)$ 
are obtained by integrating (\ref{eqn:smooth_measure_asymps}) over $I(\lambda, \overline{c}) \cap \overline{p}(\Omega)$ in the 
$\overline{\mu}$ variable, considering that $\Psi$ is supported microlcally on $\Omega$.
Set $\delta_0 = 3\epsilon_0/4$ as in (\ref{eqn:Omega_set});
by the result of  Prop. \ref{prop:smooth_spec_measure_microlocal} for $\overline{m} = \overline{0}$,
the hypotheses of  Lemma \ref{l:smoothed_measure_rough_measure} are thus satisfied. 
Taking all of this together, we see that the hypotheses of Prop. \ref{taub_prop} are satisfied.  
Finally, note that the remainder term $\mathscr{R}$ in the conclusion of Theorem \ref{thm:main} 
is, by the triangle inequality, dominated by the sum of the integral in $\xi$ of 
$\mathcal{R}$ from (\ref{eqn:smooth_measure_asymps}) over the box $I(\lambda;\overline{c})$ 
(which can be estimated using Cor. \ref{cor:7.4})
and the term $C_{\rho'}(1 + \lambda)^{n-1}$ from Prop. \ref{taub_prop} with $\overline{m}=0$.  
\qed


\subsection{Proof of Corollary \ref{cor:main}} \label{sect:cor proof}

Given Thm. \ref{thm:main}, we expand the phase function in \eqref{eq:QCI_WeylLaw}
about the diagonal $\{x=y\}$, writing
$$S(x,\xi)-S(y,\xi)=A(x,y,\xi)\cdot (x-y) =\left(\nabla_xS(x,\xi)+T(x,y,\xi)\right)\cdot(x-y)$$
where $T=\mathcal O\left(|x-y|\cdot|\xi|\right)$. This gives 
$$e^{i\left(S(x,\xi)-S(y,\xi)\right)}=e^{i\nabla_xS(x,\xi)\cdot(x-y)}(1+\mathcal O\left(|x-y|^2|\xi|\right).$$
Substituting the first term into \eqref{eq:QCI_WeylLaw} and using the fact that $b\cdot a=|\sigma(\Psi)|^2$ on the diagonal
yields the first term in \eqref{eqn:cor}. On the other hand, substituting the second term into \eqref{eq:QCI_WeylLaw}
yields an integral which can simply be estimated by
$$vol\left(I(\lambda,\overline{c})\right)\cdot \mathcal O(1)\cdot \mathcal O\left(|x-y|^2|\lambda|\right) 
=\mathcal O\left(|x-y|^2\lambda^{n+1}\right),$$
which is $\mathcal O\left(\lambda^{n-1}\right)$ since Corollary \ref{cor:main} assumes that $|x-y|\le c/\lambda$,
finishing the proof. \qed

\bibliography{master_bib}

\begin{thebibliography}{MathOver17}
\expandafter\ifx\csname url\endcsname\relax
  \def\url#1{\texttt{#1}}\fi
\expandafter\ifx\csname doi\endcsname\relax
  \def\doi#1{\burlalt{doi:#1}{http://dx.doi.org/#1}}\fi
\expandafter\ifx\csname urlprefix\endcsname\relax\def\urlprefix{URL }\fi
\expandafter\ifx\csname href\endcsname\relax
  \def\href#1#2{#2}\fi
\expandafter\ifx\csname burlalt\endcsname\relax
  \def\burlalt#1#2{\href{#2}{#1}}\fi

\bibitem[Ava56]{Avakumovic1956}
V.~G. Avakumovi\'{c}.
\newblock \"{U}ber die {E}igenfunktionen auf geschlossenen {R}iemannschen
  {M}annigfaltigkeiten.
\newblock {\em Math. Z.}, 65:327--344, 1956.
\newblock \doi{10.1007/BF01473886}.

\bibitem[Ber77]{Berard1977}
P.~H. B{\'e}rard.
\newblock On the wave equation on a compact {R}iemannian manifold without
  conjugate points.
\newblock {\em Mathematische Zeit.}, 155(3):249--276, 1977.
\newblock \doi{10.1007/BF02028444}.

\bibitem[BGT07]{BGT07}
N.~Burq, P.~G\'{e}rard, and N.~Tzvetkov.
\newblock Restrictions of the {Laplace-Beltrami} eigenfunctions to
  submanifolds.
\newblock {\em Duke Math. Jour.}, 136(1):445--486, 2007.

\bibitem[BS07]{BS02}
E.~Bogomolny and C.~Schmit.
\newblock Random wavefunctions and percolation.
\newblock {\em J. Phys. A}, 40(47):14033--14043, 2007.
\newblock \doi{10.1088/1751-8113/40/47/001}.

\bibitem[CdV79]{CdV79}
Y.~Colin~de Verdi\`ere.
\newblock Spectre conjoint des op\'erateurs pseudo-differential qui commutent:
  Le case non-integrable.
\newblock {\em Duke Math. Jour.}, 46(1):169--182, 1979.

\bibitem[CH15]{CanzaniHanin2015}
Y.~Canzani and B.~Hanin.
\newblock {Scaling limit for the kernel of the spectral projector and remainder
  estimates in the pointwise Weyl law}.
\newblock {\em Anal. PDE}, 8(7):1707--1731, 2015.
\newblock \doi{10.2140/apde.2015.8.1707}.

\bibitem[CH18]{CanzaniHanin2018}
Y.~Canzani and B.~Hanin.
\newblock ${{C^{\infty}}}$ {Scaling asymptotics for the spectral projector of
  the Laplacian}.
\newblock {\em Jour. Geometric Analysis}, 28(1):111--122, 2018.
\newblock \doi{10.1007/s12220-017-9812-5}.

\bibitem[DG75]{DuistermaatGuillemin1975}
J.~Duistermaat and V.~Guillemin.
\newblock The spectrum of positive elliptic operators and periodic
  bicharacteristics.
\newblock {\em Inventiones math.}, 29:39--79, 1975.
\newblock \doi{10.1007/BF01405172}.

\bibitem[DH71]{DuistermaatHormander72}
J.~Duistermaat and L.~H\"ormander.
\newblock Fourier integral operators, ii.
\newblock {\em Acta mathematica}, 128:183--269, 1971.

\bibitem[Dui74]{Duistermaat74}
J.~J. Duistermaat.
\newblock Oscillatory integrals, {L}agrange immersions and unfolding of
  singularities.
\newblock {\em Comm. Pure Appl. Math.}, 27:207--281, 1974.
\newblock \doi{10.1002/cpa.3160270205}.

\bibitem[GT20]{GalkowskiToth2020}
J.~Galkowski and J.~A. Toth.
\newblock Pointwise bounds for joint eigenfunctions of quantum completely
  integrable systems.
\newblock {\em Comm. Math. Phys.}, 375(2):915--947, 2020.
\newblock \doi{10.1007/s00220-020-03730-3}.

\bibitem[H{\"o}r68]{Hormander1968}
L.~H{\"o}rmander.
\newblock The spectral function of an elliptic operator.
\newblock {\em Acta math.}, 121(1):193--218, 1968.
\newblock \doi{10.1007/BF02391913}.

\bibitem[H\"or71]{FIO1}
L.~H\"{o}rmander.
\newblock Fourier integral operators. {I}.
\newblock {\em Acta Math.}, 127(1-2):79--183, 1971.
\newblock \doi{10.1007/BF02392052}.

\bibitem[H{\"o}rIII]{HormanderBook1985}
L.~H{\"o}rmander.
\newblock {\em The Analysis of Linear Partial Differential Operators III}.
\newblock Berlin: Springer-Verlag, 1985.
\newblock \doi{10.1007/978-3-540-49938-1}.

\bibitem[H{\"o}rIV]{HormanderBook1985b}
L.~H{\"o}rmander.
\newblock {\em The Analysis of Linear Partial Differential Operators IV}.
\newblock Berlin: Springer-Verlag, 1985.
\newblock \doi{10.1007/978-3-540-49938-1}.

\bibitem[Hu09]{Hu2009}
R.~Hu.
\newblock {$L^p$} norm estimates of eigenfunctions restricted to submanifolds.
\newblock {\em Forum Math.}, 21(6):1021--1052, 2009.
\newblock \doi{doi.org/10.1515/FORUM.2009.051}.

\bibitem[{Kee}23]{Keeler2023}
B.~{Keeler}.
\newblock A logarithmic improvement in the two-point weyl law for manifolds
  without conjugate points.
\newblock {\em Annales de l'institut Fourier}, march 2023.
\newblock To appear. Available from:
  \href{https://arxiv.org/abs/1905.05136}{https://arxiv.org/abs/1905.05136}.

\bibitem[Lev52]{Levitan1952}
B.~M. Levitan.
\newblock On the asymptotic behavior of the spectral function of a self-adjoint
  differential equation of the second order.
\newblock {\em Izv. Akad. Nauk SSSR. Ser. Mat.}, 16(4):325--352, 1952.

\bibitem[Lev55]{Levitan1955}
B.~M. Levitan.
\newblock On the asymptotic behavior of a spectral function and on expansion in
  eigenfunctions of a self-adjoint differential equation of second order. {II}.
\newblock {\em Izv. Akad. Nauk SSSR. Ser. Mat.}, 19:33--58, 1955.

\bibitem[Mar16]{Marsh}
S.~Marshall.
\newblock {$L^p$} norms of higher rank eigenfunctions and bounds for spherical
  functions.
\newblock {\em J. Eur. Math. Soc. (JEMS)}, 18(7):1437--1493, 2016.
\newblock \doi{10.4171/JEMS/619}.

\bibitem[MathOver17]{Mathoverflow_Momentum}
{\em Mathoverflow: Origin of the name ``momentum map"}.
\newblock
  \urlprefix\url{https://mathoverflow.net/questions/242468/origin-of-the-name-momentum-map}.

\bibitem[NS09]{NS09}
F.~Nazarov and M.~Sodin.
\newblock On the number of nodal domains of random spherical harmonics.
\newblock {\em Amer. J. Math.}, 131(5):1337--1357, 2009.
\newblock \doi{10.1353/ajm.0.0070}.

\bibitem[NS16]{NS16}
F.~Nazarov and M.~Sodin.
\newblock Asymptotic laws for the spatial distribution and the number of
  connected components of zero sets of {G}aussian random functions.
\newblock {\em J. Math. Phys. Anal. Geom.}, 12(3):205--278, 2016.
\newblock \doi{10.15407/mag12.03.205}.

\bibitem[Roz17]{Roz17}
Y.~Rozenshein.
\newblock The number of nodal components of arithmetic random waves.
\newblock {\em Int. Math. Res. Not. IMRN}, (22):6990--7027, 2017.
\newblock \doi{10.1093/imrn/rnw226}.

\bibitem[Saf88]{Safarov1988}
Y.~G. Safarov.
\newblock Asymptotics of a spectral function of a positive elliptic operator
  without a nontrapping condition.
\newblock {\em Funktsional. Anal. i Prilozhen.}, 22(3):53--65, 96, 1988.
\newblock \doi{10.1007/BF01077627}.

\bibitem[SarMor]{Sarnak_letter}
P.~Sarnak.
\newblock {\em Letter to Morawetz}.
\newblock
  \urlprefix\url{https://publications.ias.edu/sites/default/files/Sarnak_Letter_to_Morawetz.pdf}.

\bibitem[Sog14]{SoggeBook2014}
C.~Sogge.
\newblock {\em Hangzhou Lectures on Eigenfunctions of the Laplacian (AM-188)}.
\newblock Princeton University Press, 2014.
\newblock \doi{10.1515/9781400850549}.

\bibitem[Sog17]{SoggeBook2017}
C.~Sogge.
\newblock {\em Fourier integrals in classical analysis}, volume 210.
\newblock Cambridge University Press, 2017.
\newblock \doi{10.1017/9781316341186}.

\bibitem[Str72]{Strichartz1972}
R.~Strichartz.
\newblock A functional calculus for elliptic pseudo-differential operators.
\newblock {\em American Journal of Mathematics}, 94(3):711--722, 1972.

\bibitem[SZ02]{SoggeZelditch2002}
C.~Sogge and S.~Zeldtich.
\newblock Riemannian manifolds with maximal eigenfunction growth.
\newblock {\em Duke Math. J.}, 114(3):387--437, 2002.
\newblock \doi{10.1215/S0012-7094-02-11431-8}.

\bibitem[Tac19]{Tacy2019}
M.~Tacy.
\newblock {$L^p$} estimates for joint quasimodes of semiclassical
  pseudodifferential operators.
\newblock {\em Israel J. Math.}, 232(1):401--425, 2019.
\newblock \doi{10.1007/s11856-019-1878-2}.

\bibitem[Tot95]{TothVarious}
J.~Toth.
\newblock Various quantum mechanical aspects of quadratic forms.
\newblock {\em Jour. Functional Analysis}, 130:1--42, 1995.

\bibitem[Toth09]{Toth09}
J.~A. Toth.
\newblock {$L^2$}-restriction bounds for eigenfunctions along curves in the
  quantum completely integrable case.
\newblock {\em Comm. Math. Phys.}, 288(1):379--401, 2009.
\newblock \doi{10.1007/s00220-009-0747-y}.

\bibitem[TZ03]{TZ01}
J.~A. Toth and S.~Zelditch.
\newblock Norms of modes and quasi-modes revisited.
\newblock In {\em Harmonic analysis at {M}ount {H}olyoke ({S}outh {H}adley,
  {MA}, 2001)}, volume 320 of {\em Contemp. Math.}, pages 435--458. Amer. Math.
  Soc., Providence, RI, 2003.
\newblock \doi{10.1090/conm/320/05622}.

\bibitem[Wig23]{Wig03}
I.~Wigman.
\newblock Random wavefunctions and percolation.
\newblock {\em J Appl. and Comput. Topology}, (47), 2023.
\newblock \doi{10.1007/s41468-023-00140-x}.

\end{thebibliography}
\bibliographystyle{halpha-abbrv}

\end{document}